\newtheorem{theorem}{Theorem}[section]
\newtheorem{proposition}[theorem]{Proposition}
\newtheorem{lemma}[theorem]{Lemma}
\newtheorem{corollary}[theorem]{Corollary}
\newtheorem{conjecture}[theorem]{Conjecture}
\theoremstyle{definition}
\newtheorem{definition}[theorem]{Definition}
\newtheorem{example}[theorem]{Example}
\newtheorem{remark}[theorem]{Remark}
\begin{document}

\newcommand{\legendre}[2]{\genfrac{(}{)}{}{}{#1}{#2}} 
\newcommand{\seqnum}[1]{\href{http://oeis.org/#1}{\underline{#1}}}

\newcommand{\beq}{\begin{equation}}  
\newcommand{\eeq}{\end{equation}}  
\newcommand{\bea}{\begin{eqnarray}}  
\newcommand{\eea}{\end{eqnarray}}  
\newcommand\la{{\lambda}}   
\newcommand\La{{\Lambda}}   
\newcommand\ka{{\kappa}}   
\newcommand\al{{\alpha}}   
\newcommand\be{{\beta}} 
\newcommand\gam{{\gamma}}     
\newcommand\om{{\omega}}  
\newcommand\tal{{\tilde{\alpha}}}  
\newcommand\tbe{{\tilde{\beta}}}   
\newcommand\tla{{\tilde{\lambda}}}  
\newcommand\tmu{{\tilde{\mu}}}  
\newcommand\si{{\sigma}}  
\newcommand\lax{{\bf L}}    
\newcommand\mma{{\bf M}}    
\newcommand\rd{{\mathrm{d}}}  
\newcommand\ri{{\mathrm{i}}} 

\newcommand\rS{{\mathrm{S}}} 

\newcommand{\F}{{\mathbb F}}
\newcommand{\N}{{\mathbb N}}
\newcommand{\Q}{{\mathbb Q}}
\newcommand{\Z}{{\mathbb Z}}
\newcommand{\C}{{\mathbb C}}
\newcommand{\R}{{\mathbb R}}

\newcommand\tI{{\tilde{\mathcal{I}}}}

\newcommand\SH{{\mathcal{S}}}

\newcommand\Tc{{\mathcal{T}}}

\newcommand\Uc{{\mathcal{U}}}

\newcommand\Pc{{\mathcal{P}}}

\newcommand\tr{{{\mathrm{tr}}\,}}

\title{On a family of sequences related to Chebyshev polynomials}
\author{Andrew N. W. Hone\footnote{School of Mathematics, Statistics \& Actuarial Science, 
University of Kent, UK. Currently on leave in the 
School of Mathematics and Statistics, 
University of New South Wales, Sydney NSW 2052, Australia. },$\,$
L. Edson Jeffery and  Robert G. Selcoe
}
\maketitle

\begin{abstract}
The appearance of primes in  
a family of linear recurrence sequences labelled by a positive integer $n$ is considered. The
terms of each sequence correspond to a particular class of Lehmer numbers, or (viewing them as   polynomials in $n$) 
dilated versions of the so-called Chebyshev polynomials of the fourth kind, also known as airfoil polynomials. 
It is proved that when the value of $n$ is given by a dilated Chebyshev polynomial of the first kind evaluated at a suitable integer, 
either the sequence contains a single prime, or no term is prime. For all other values of $n$, it is conjectured that the 
sequence contains infinitely many primes, whose distribution has analogous properties to the distribution of Mersenne primes 
among the Mersenne numbers. Similar results are obtained for the sequences associated with negative integers $n$, which 
correspond to Chebyshev polynomials of the third kind, and to another family of Lehmer numbers. \\
{\it 2010 Mathematics Subject Classification:} Primary 11B83; Secondary 11A51. \\
{\it Keywords:} Recurrence sequence, Chebyshev polynomial, composite number, Lehmer number.
\end{abstract}

\section{Introduction}

Consider the  linear recurrence of second order given by 
\beq \label{srec} 
s_{k+2} - n\, s_{k+1} + s_k =0, 
\eeq 
together with the initial conditions
\beq\label{inits} 
s_0 = 1, \qquad s_1 = n+1. 
\eeq 
For each integer $n$, this generates an integer sequence that begins 
\beq\label{sseq} 
\begin{array}{l}
1,n+1,n^2+n-1, n^3+n^2-2n-1, n^4+n^3-3n^2-2n+1,  
\\ n^5+n^4-4n^3-3n^2+3n+1, \ldots . 
\end{array}
\eeq 
The sequence can also be extended backwards to negative indices $k$, so that in particular 
$s_{-1}=-1=-s_0$, which implies 
that it has the 
symmetry 
\beq\label{sym} 
s_k(n)=-s_{-k-1}(n)
\eeq 
for all $k$. 
In this way we obtain a sequence that we denote 
by $(\,s_k(n)\,)_{k\in\Z}$, where the argument denotes the dependence 
on $n$.

We can also interpret this as a sequence of polynomials  in 
the variable $n$, with the integer sequences being obtained by 
substituting  particular values for the argument. From this point of view, it is apparent from the recursive definition that, 
for each $k\geq 0$,  $s_k(n)$ is a monic polynomial of degree $k$ in $n$ with integer coefficients.  In fact, these  are 
rescaled (or dilated) versions of polynomials that are used to determine the pressure distribution in linear airfoil theory, being given 
by 
\beq\label{airfoil} 
s_k(n) = W_k\left(\frac{n}{2}\right), \qquad W_k (\cos\theta ) = \frac{ \sin\Big( (2k+1)\theta /2 \Big) } {\sin(\theta /2)},
\eeq 
where $W_k$ are known as the Chebyshev polynomials of the fourth kind \cite{mh}, or the airfoil polynomials of the second kind (see \cite{nasa}, where the notation $u_k$ is used in place of $W_k$).   
As a function of $\theta$, the expression on the far right-hand side of (\ref{airfoil}) is known as the Dirichlet kernel in Fourier 
analysis, where it is usually denoted $D_k(\theta)$ \cite{dym}. 
Compared with those of the third and fourth kinds, 
the properties of Chebyshev polynomials of the first and second kinds are much 
better known, 
and in what follows we will make extensive use of connections with the latter two sets of polynomials.  

The 
primary goal of  
this article is to describe the case where $n$ is a positive integer, but before proceeding, we 
consider the sequences obtained for some particular small values of $|n|\leq 2$, which will mostly be excluded from subsequent analysis, but are relevant nevertheless. In the case $n=0$, the sequence 
$(\,s_k(0)\,)$ begins 
\beq\label{modn} 
1,1,-1,-1,\ldots, 
\eeq 
and repeats with period 4; we mention this case because it is equivalent to the sequence $(\,s_k(n) \,\bmod n\,)$. 
When $n=1$ the sequence has period 6, being specified by the six initial terms 
\beq\label{n1}
1,2,1,-1,-2,-1, \ldots, 
\eeq 
and for $n=-1$ the sequence repeats the values 
\beq\label{nm1}1,0,-1 
\eeq 
with period 3.
For $n=2$ the sequence grows linearly with $k$, beginning with 
\beq\label{n2}
1,3,5,7,9,11,\ldots, 
\eeq 
and  consists of the odd integers, that is 
\beq\label{lin}
s_k(2) = 2k+1, 
\eeq 
while for $n=-2$ the sequence has period 2, being given by 
\beq\label{nm2} 
s_k(-2) =(-1)^k. 
\eeq 
For each integer $n\geq 3$ the sequence increases monotonically for $k\geq 0$ and grows exponentially with $k$ (see below 
for details).   

Sequence \seqnum{A269254} in the Online Encyclopedia of Integer Sequences (OEIS) \cite{oeis} records the first appearance of a prime 
term in $(\,s_k(n)\,)$. 

\begin{definition} {\bf (Sequence   \seqnum{A269254}.)} \label{adef} 
For each integer $n\geq 1$, if the sequence of terms $(\,s_k(n)\,)_{k\geq 0}$ with non-negative indices contains 
a prime,  then let $a_n$ be  the smallest value of $k\geq 1$ such that $s_k(n)$ is prime; or otherwise, if there is no such 
term, let $a_n=-1$. 
\end{definition} 

There is also sequence  \seqnum{A269253}, whose $n$th term is given by the first prime to 
appear in  $(\,s_k(n)\,)_{k\geq 0}$, or by $-1$ if no prime appears. 

To illustrate the above definition, 
let us start with $n=1$: since  the first prime term in 
the sequence (\ref{n1}) is  $s_1(1)=2$, it follows that $a_1=1$. Similarly, for $n=2$, the first prime in (\ref{n2}) is $s_1(2)=3$, so $a_2=1$; but for $n=3$, the sequence $(\,s_k(3)\,)$ begins $1,4,11,\ldots$, so $a_3=2$. In  cases where a prime term has appeared in the sequence $(\,s_k(n)\,)$, the value of $a_n$ is immediately determined. The sequence 
$(a_n)_{n\geq 1}$ begins with the following terms for $1\leq n\leq 34:$ 
\beq\label{anseq} 
\begin{array}{l}
1, 1, 2, 1, 2, 1, -1, 2, 2, 1, 2, 1, 2, -1, 2, 1, 3, 1, 2, 2, 2, 1, -1, 2, 6, 
2, 3, 1, \\ 3,  1, 
 2, 9, 9, -1,\ldots .
\end{array}
\eeq 
All of the positive values above can be checked very rapidly, and it turns out that all values of $a_n>0$ are of the form $(p-1)/2$, where $p$ is an odd prime: this is a direct consequence of  Lemma \ref{2kp1} below. What is less easy to verify is the negative values $a_{7}=a_{14}=a_{23}=a_{34}=-1$ displayed above, indicating no primes. For instance, when $n=7$, the sequence 
$(\,s_k(7)\,)$ begins with 
\beq\label{n7} 
1,8,55, 377,2584,17711,121393,832040,5702887,\ldots, 
\eeq 
and it can be verified that none of these first few terms are prime; 
but to show that $a_7=-1$ it is necessary to prove that 
$s_k(7)$ is composite for all $k>0$: a proof of this fact can be found in section \ref{fact}, while another proof appears in section \ref{chebv} in a broader setting.  

In fact, in order to understand the family of sequences $(\,s_k(n)\,)$ with positive $n$, it will be natural 
to consider negative integer values of $n$ as well. In that case, it is helpful to define the family of sequences   $(\,r_k(n)\,)$ 
given by 
\beq\label{rsrel} 
r_k(n)=(-1)^k\, s_k(-n).
\eeq 
It is straightforward to show by induction that, for fixed $n$, the sequence 
$(\,r_k(n)\,)$ satisfies the same recurrence (\ref{srec}) but with different initial conditions, 
namely 
$$ 
r_{k+2} - n\, r_{k+1} + r_k =0, 
$$ 
together with 
\beq\label{rinits} 
r_0 = 1, \qquad r_1 = n-1. 
\eeq 
For integer $n$, 
this generates an integer sequence that begins 
\beq\label{rseq} \begin{array}{l}
1,n-1,n^2-n-1, n^3-n^2-2n+1, n^4-n^3-3n^2+2n+1, \\ n^5-n^4-4n^3+3n^2+3n-1, \ldots . 
\end{array}
\eeq 
Up to rescaling $n$ by a  factor of 2, this sequence of polynomials  arises in describing the downwash distribution 
in linear airfoil theory,
and in this context they are referred to as 
 the airfoil polynomials of the first kind  \cite{nasa}, denoted $t_k$; with the alternative notation 
 $V_k$ they are also referred to as the Chebyshev polynomials of the third kind \cite{mh}, so that 
\beq\label{1stairfoil} 
r_k(n) = V_k\left(\frac{n}{2}\right), \qquad V_k (\cos\theta ) = \frac{ \cos\Big( (2k+1)\theta /2 \Big) } {\cos(\theta /2)}.
\eeq 
Since $n=2\cos\theta$, the identity (\ref{rsrel}) can also be obtained immediately by taking $\theta\to\theta+\pi$ in (\ref{airfoil}), and comparing with (\ref{1stairfoil}). 

There is  another OEIS sequence that is relevant here, corresponding to the first appearance of a prime in the 
sequence defined by (\ref{rsrel}) for each positive integer $n$.  
\begin{definition} {\bf (Sequence  \seqnum{A269252}.)} \label{atdef} 
For each integer $n\geq 1$, if the sequence of terms $(\,r_k(n)\,)_{k\geq 0}$ with non-negative indices contains 
a prime,  then let $\tilde{a}_n$ be  the smallest value of $k\geq 1$ such that $r_k(n)$ is prime; or otherwise, if there is no such 
term, let $\tilde{a}_n=-1$. 
\end{definition} 

There is also sequence  \seqnum{A269251}, whose $n$th term is given by the first prime to 
appear in  $(\,r_k(n)\,)_{k\geq 0}$, or by $-1$ if no prime appears. 

For comparison with 
 \seqnum{A269254}, note that the first few terms of   \seqnum{A269252} for $1\leq n\leq 34$ are given by  
\beq\label{atnseq} \begin{array}{l}
-1, -1, 1, 1, 2,
 1, 2, 1, 2, 2, 
 2, 1, 3, 1, 3, 
 2, 2, 1, 14, 1, 
 2, 2, 3, 1, 2, 
 5, 2, 36,  \\ 2, 1, 
 2, 1, 15, -1, 
\ldots .
\end{array}
\eeq 
The two initial $-1$ values that appear above for $n=1,2$ clearly correspond to 
(\ref{nm1}) and (\ref{nm2}), respectively, while the first non-trivial case 
to consider is the value $-1$ that appears for $n=34$, corresponding to the sequence 
$(\, r_k(34)\,)$, which begins   with 
\beq\label{nm34} 
    1,
                               33,
                              1121,
                             38081,
                            1293633,
                            43945441,
                           1492851361,
                          50713000833
,\ldots;
\eeq 
again it can be verified that none of these first few terms are prime, while a proof 
that all terms with $k>0$ are composite for  this and certain other values of $n$ 
is given in section 5. 

Aside from the connection with Chebyshev polynomials, the numbers 
$s_k(n)$ and $r_k(n)$ also correspond to particular instances of Lehmer numbers with 
odd index, which are closely related to sequences of Lucas numbers.  
Prime divisors 
in sequences of Lucas and 
Lehmer numbers have  been studied for some time; see 
e.g.\ \cite{bilu, rotk, schinzel, stewart} for some 
general results, or see   
\cite{estw} for a more 
elementary introduction to primitive divisors. However, to the best of our 
knowledge, the question of when such sequences are 
without prime terms,  or of where the first prime appears in such 
sequences,  has not been considered in detail before, 
except in the case $n=6$.

The case $n=6$ corresponds to the so-called NSW numbers, 
named after \cite{nsw} (sequence \seqnum{A002315}). An NSW number $q$ can be characterized 
by 
there being some $r$ such that the pair of positive integers $(q,r)$  
satisfies the Diophantine equation 
$$
q^2+1=2r^2.
$$
The sequence of NSW numbers   is given by $q=s_k(6)$ for $k\geq 0$, 
with the corresponding solution to the above equation 
being $(q,r)=( s_k(6),r_k(6))$. The subsequence of prime NSW numbers 
is of particular interest in relation to finite simple groups of square order: 
the symplectic group of dimension 4 over the finite field $\F_q$ has a 
square order if and only if $q$ is a prime NSW number, with the order 
being $(q^2(q^2-1)r)^2$. The first prime NSW number is $s_1(6)=7$, 
and the symplectic group of dimension 4 over $\F_7$ is of order 
$11760^2$.

For an arbitrary  linear recurrence relation of second order, that is 
$$
x_{k+2}=a\, x_{k+1}+b\,  x_k, \qquad (a,b)\in\Z^2, 
$$ 
the general question of whether it generates a sequence 
without prime terms has been considered for some time. If either the coefficients 
$a,b$ or the two initial values $x_0,x_1$ have a common factor then it is obvious 
that all terms  $x_k$ for $k\geq2$ have the same common factor, so 
the main case of interest is where $\gcd(a,b)=1=\gcd(x_0,x_1)$. 
In the case of the  Fibonacci recurrence with $a=b=1$,  
the groundbreaking result was due to 
Graham, who found a sequence 
whose first two terms are relatively prime and which consists only of composite integers \cite{graham}.  
This result was generalized to arbitrary 
second-order recurrences by Somer \cite{somer} and Dubickas et al.\ \cite{composite}.

An outline of the paper is as follows. The next section serves to set up notation 
and provide a rapid introduction 
to the properties of dilated Chebyshev polynomials of the first and second kinds, 
which will be used extensively in the sequel, and also contains  the 
required definitions of the corresponding 
sequences of Lucas and Lehmer numbers that appear subsequently.  Section 3 provides a 
very brief review of some standard facts about linear recurrence sequences and products 
of such sequences, before a presentation of examples and preliminary results about values of $n$ for which the terms  
$s_k(n)$ factor into a product of two linear recurrence sequences; this serves  
to illustrate and motivate 
the results which appear in section 5. As preparation for the latter, 
section 4 contains a collection of various general properties  
of the sequences 
$(\,s_k(n)\,)$. The main results of the paper, 
on the factorization of $(\,s_k(n)\,)$ and $(\,r_k(n)\,)$ when  
$n$ is a dilated Chebyshev polynomial of the first kind evaluated at 
integer argument (Chebyshev values), 
are presented in section 5. Section 6 considers the appearance 
of primes in these sequences in the case that $n$ is  not one of the Chebyshev values, 
and gives heuristic arguments and numerical evidence to support a 
conjecture to the effect that the 
behaviour is analogous to that of the sequence of Mersenne primes. 
Some conclusions are made in the final section, and there are two appendices:  
the first is a collection of data on prime appearances, and the second 
is a brief catalogue of related 
sequences in the OEIS.  

This paper arose out of a series of posts to the {\tt SeqFan} mailing list, with contributions from many people, 
both professional and recreational mathematicians. 
Our aim throughout has been to make the presentation  as explicit as possible, and for the sake of completeness 
we have stated several standard facts and definitions, as well as providing direct,  elementary proofs of almost every statement (even when some of them are  particular cases of more  general  results 
in the literature). We hope that in this form it  will be possible for our work to be appreciated by sequence 
enthusiasts of every persuasion.

\section{Dilated Chebyshev polynomials and Lehmer numbers} 

The families of Chebyshev polynomials arise in the theory of orthogonal polynomials, and have diverse applications in numerical analysis \cite{mh}. There are 
four such families, 
and while the Chebyshev polynomials  of the first and second kinds are well studied in the literature, those of the third and fourth kinds are not so well known, and some of their 
connections to arithmetical problems have only been considered quite recently \cite{jonny}. 

In order to define scaled versions of the standard Chebsyhev polynomials in terms of trigonometric functions, 
let 
$$ 
n = 2\cos\theta  = \la + \la^{-1}, 
$$ 
so that we may write 
\beq\label{ladef} 
\la = \frac{n+\sqrt{n^2 - 4}}{2} = e^{\ri\theta}, 
\eeq 
where $\ri=\sqrt{-1}$. 
Then the formulae 
\beq\label{1st} 
\Tc_k(2\cos\theta) = 2\, \cos (k\theta), \qquad 
\Uc_k(2\cos\theta) = \frac{\sin\left((k+1)\theta\right)}{\sin\theta} 
\eeq 
define $\Tc_k$, $\Uc_k$ as polynomials in $n$, for all $k\in\Z$. 
In chapter 18 of \cite{dlmf} these polynomials are referred to as the dilated Chebyshev polynomials of the first and second kinds, and they are denoted by $C_k$, $S_k$ respectively. In  standard notation, the classical Chebyshev polynomials of the first and second kinds are written as  $T_k$ and $U_k$, 
and their precise 
relationship with the dilated  polynomials used here   is as follows: 
$$ 
\Tc_k(n) = 2\,T_k\left(\frac{n}{2}\right), \qquad \Uc_k(n) = U_k\left(\frac{n}{2}\right). 
$$ 
 
It is straightforward to show from the definitions (\ref{1st}) that the dilated Chebyshev polynomials of the first and second kinds satisfy the same recurrence (\ref{srec}) as the sequence $(\,s_k(n) \,)$, but with different initial values. For example, to verify that the sequence $(\,\Uc_k(n) \,)$ satisfies the recurrence, it is sufficient to note that 
\beq \label{urec} \begin{array}{rcl}
\Uc_{k}(n) - n \, \Uc_{k-1}(n) +\Uc_{k-2}(n) &  
= &  \frac{\sin\left((k+1)\theta\right)  -2\cos\theta \,{\sin(k\theta)}+ {\sin\left((k-1)\theta\right)} }{\sin\theta}, 
\end{array} 
\eeq 
and then observe that the right-hand side above vanishes as a consequence of  
the addition formula for sine in the form 
\beq\label{sine} 
\sin (\theta+\phi)+\sin(\theta -\phi) = 2\sin\theta \cos\phi. 
\eeq 
For comparison with other texts, we note that the sequence of dilated first kind polynomials begins thus: 
\beq\label{1stk} 
(\, \Tc_k(n)\,): \qquad 2,n, n^2 -2, n^3-3n, n^4 - 4n^2+2, n^5-5n^3+5n, \ldots . 
\eeq 
In contrast, the sequence of dilated second  kind polynomials begins as 
\beq\label{2ndk} 
(\, \Uc_k(n)\,): \qquad 1,n, n^2 -1, n^3-2n, n^4 - 3n^2+1, n^5-4n^3+3n, \ldots . 
\eeq 
For future reference, we note the standard identities 
\beq\label{tid} 
\Tc_{ab}(n) = \Tc_a(\Tc_b(n)) 
\eeq 
and 
\beq\label{uid} 
\Uc_{ab-1}(n) = \Uc_{a-1}(\Tc_b(n)) \,\Uc_{b-1}(n), 
\eeq 
which follow from the trigonometric definitions above. 

In order to get a formula for the coefficients of the polynomials $s_k(n)$,  we present an explicit expansion for 
the dilated Chebyshev polynomials of the second kind. Although this can be found elsewhere in the literature (cf.\ equations (5.74) and (6.129) in \cite{conc}), for 
completeness we present an elementary proof. 

\begin{proposition}\label{uprop} 
The dilated Chebyshev polynomials of the second kind are given by 
\beq\label{uexp} 
\Uc_k(n) = \sum_{i=0}^{\left \lfloor{\frac{k}{2}}\right \rfloor}(-1)^i   { k-i \choose i} 
n^{k-2i}. \eeq  
\end{proposition}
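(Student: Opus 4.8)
The plan is to establish (\ref{uexp}) by induction on $k\geq 0$, using the three-term recurrence
$$\Uc_k(n) = n\,\Uc_{k-1}(n) - \Uc_{k-2}(n),$$
which the dilated Chebyshev polynomials of the second kind satisfy by virtue of the computation (\ref{urec})--(\ref{sine}). The two base cases $k=0$ and $k=1$ are immediate from the list (\ref{2ndk}), since the right-hand side of (\ref{uexp}) evaluates to $\binom{0}{0}n^{0}=1=\Uc_0(n)$ and $\binom{1}{0}n=n=\Uc_1(n)$.

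For the inductive step I would fix $k\geq 2$, assume (\ref{uexp}) for both $k-1$ and $k-2$, and substitute these into the recurrence. Multiplying the $(k-1)$-expansion by $n$ raises every exponent by one, while in the $(k-2)$-expansion I would shift the summation index (replacing the summation variable $j$ by $i-1$) so that it, too, becomes a sum of multiples of $n^{k-2i}$. Collecting like powers of $n$, the coefficient of $n^{k-2i}$ in $\Uc_k(n)$ is then $(-1)^{i}\bigl[\binom{k-1-i}{i}+\binom{k-1-i}{i-1}\bigr]$, and Pascal's rule identifies this with $(-1)^{i}\binom{k-i}{i}$, which is exactly the coefficient claimed in (\ref{uexp}).

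The only delicate point — and hence what I expect to be the main obstacle — is the bookkeeping at the ends of the ranges of summation, since $\lfloor(k-1)/2\rfloor$, $\lfloor(k-2)/2\rfloor$ and $\lfloor k/2\rfloor$ need not agree and their interrelation depends on the parity of $k$. I would handle this uniformly by adopting the standard convention that $\binom{m}{r}=0$ whenever $r<0$ or $r>m$; this allows all the sums involved to be extended to the common index range $0\leq i\leq\lfloor k/2\rfloor$ without changing their values, so that Pascal's rule can be applied term by term, the extreme term $i=\lfloor k/2\rfloor$ being checked by direct substitution in the two cases $k$ even and $k$ odd. An alternative, essentially bookkeeping-free route would be to expand the generating function $\sum_{k\geq 0}\Uc_k(n)\,x^{k}=(1-nx+x^{2})^{-1}$ as a geometric series in $(nx-x^{2})$, apply the binomial theorem to each power $(nx-x^{2})^{m}$, and read off the coefficient of $x^{k}$ (which forces $m=k-i$ and $i\leq k/2$); but I would most likely present the inductive argument, as it is the most self-contained and stays closest to the recursive definition of the sequence.
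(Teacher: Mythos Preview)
Your proposal is correct and follows essentially the same route as the paper's own proof: verify the base cases $k=0,1$, substitute the claimed expansion into the three-term recurrence, and reduce the coefficient comparison to Pascal's identity $\binom{k-i}{i}=\binom{k-i-1}{i}+\binom{k-i-1}{i-1}$. Your extra care with the endpoints of the summation (extending the sums via the convention $\binom{m}{r}=0$ for $r<0$ or $r>m$) is more explicit than the paper's treatment but does not constitute a different method.
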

\begin{proof} First note that for $k=0,1$ the sum on the right-hand side of (\ref{uexp}) 
agrees with the initial terms $\Uc_0=1$, $\Uc_1=n$. Then, upon substituting the sum formula into the recurrence (\ref{urec}) 
and comparing powers of $n$, after dividing by $(-1)^i$ we see that  the coefficient of $n^{k-2i}$ yields the  identity 
$$ 
 { k-i \choose i} -  { k-i -1 \choose i} -  { k-i -1 \choose i-1 } = 0 
$$ 
for binomial coefficients. 
 Thus the sequences defined by the left-hand and right-hand sides of (\ref{uexp}) satisfy the same recurrence with the 
same initial conditions, so they must coincide.
\end{proof}    

For use in what follows, we also define Lehmer numbers. Given a quadratic polynomial in $X$  
with roots $\al,\be$,  that is 
\beq\label{lquad} 
X^2 -\sqrt{R}\, X + Q=(X-\al)(X-\be), \qquad Q,R\in \Z, 
\eeq 
where it is assumed that $Q,R$ are coprime and $\al/\be$ is not a root of unity, 
there are two associated sequences of Lehmer numbers, 
which (adapting the 
notation of 
\cite{epsw}) we denote by $L^-_k(\sqrt{R},Q)$ and 
$L^+_k(\sqrt{R},Q)$, where 
\beq\label{lm} 
L^-_k(\sqrt{R},Q) = 
\left\{ 
\begin{aligned} 
 &\frac{\al^k-\be^k}{\al-\be}, && k\,\,\mathrm{odd} \\
&\frac{\al^k-\be^k}{\al^2-\be^2}, && k\,\,\mathrm{even}, 
\end{aligned} 
\right.
\eeq 
and 
\beq\label{lp} 
L^+_k(\sqrt{R},Q) = 
\left\{ 
\begin{aligned} 
 &\frac{\al^k+\be^k}{\al+\be}, && k\,\,\mathrm{odd} \\
& {\al^k+\be^k}, && k\,\,\mathrm{even}. 
\end{aligned} 
\right.
\eeq 
The sequences of  Lehmer numbers can be viewed as generalizations of the Lucas sequences. Assuming that 
$R$ is a perfect square, so $P=\sqrt{R}\in\Z$, the two types of Lucas sequences associated to the 
quadratic $X^2-PX+Q=(X-\al)(X-\be)$ 
are given 
by 
\beq\label{luc}
\ell^-_k (P,Q) = \frac{\al^k-\be^k}{\al - \be}, \qquad \ell^+_k (P,Q)= \al^k +\be^k; 
\eeq 
the corresponding Lehmer numbers $L^{\pm}_k(P,Q)$ are obtained from the Lucas numbers 
$\ell^{\pm}_k(P,Q)$  by removing trivial factors. 

From the above definitions, there is a  
clear link between Chebyshev polynomials and Lucas/Lehmer numbers, which can be summarized in the following

\begin{proposition}\label{lehch} 
For integer values $n$, the sequences of dilated Chebyshev polynomials of the first and second kinds coincide with  
particular Lucas sequences, that is 
\beq\label{lucas} 
\Tc_k(n) = \ell_k^+(n,1), \qquad \Uc_{k-1}(n) = \ell_k^-(n,1), 
\eeq 
while  the  sequences
 generated by (\ref{srec}) with initial values (\ref{rinits}) and (\ref{inits})  consist of Lehmer numbers 
with odd index, namely 
\beq\label{skleh} 
r_k(n) =  L^+_{2k+1}\left(\sqrt{n+2},1\right), \quad 
s_k(n) = L^-_{2k+1}\left(\sqrt{n+2},1\right)
\eeq  
 respectively, 
for all $k$. 
\end{proposition}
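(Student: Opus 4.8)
The plan is to fix the substitution $n = 2\cos\theta = \la + \la^{-1}$ as in (\ref{ladef}) and to rewrite each side of the four claimed identities as the same power-sum (trigonometric) expression; everything then reduces to reading off the defining formulas (\ref{1st}), (\ref{airfoil}) and (\ref{1stairfoil}). Since all four identities are equalities of polynomials in $n$, it suffices to verify them for $\theta$ real, whereupon they hold for all integer $n$ by polynomiality.

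For the Lucas identities (\ref{lucas}): the quadratic $X^2 - nX + 1$ factors as $(X-\la)(X-\la^{-1})$, so in the notation of (\ref{luc}) we take $P = n$, $Q = 1$, $\al = \la = e^{\ri\theta}$, $\be = \la^{-1} = e^{-\ri\theta}$. Then $\ell^+_k(n,1) = \la^k + \la^{-k} = 2\cos(k\theta) = \Tc_k(n)$ and $\ell^-_k(n,1) = (\la^k - \la^{-k})/(\la - \la^{-1}) = \sin(k\theta)/\sin\theta = \Uc_{k-1}(n)$, directly from (\ref{1st}). (Equivalently, both sides satisfy (\ref{srec}) and agree at $k = 0,1$, using $\Tc_0 = 2$, $\Tc_1 = n$ and $\Uc_{-1} = 0$, $\Uc_0 = 1$.)

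For the Lehmer identities (\ref{skleh}): since $n + 2 = 2\cos\theta + 2 = 4\cos^2(\theta/2)$ we may set $\sqrt{n+2} = 2\cos(\theta/2)$ and take $R = n+2$, $Q = 1$ in (\ref{lquad}), with
\[
\al = e^{\ri\theta/2}, \qquad \be = e^{-\ri\theta/2},
\]
so that $\al\be = Q = 1$, $\al + \be = \sqrt{R}$, and $\al^2 + \be^2 = n$, whence $\{\al^2,\be^2\} = \{\la,\la^{-1}\}$; the hypotheses of (\ref{lquad}) hold because $\gcd(Q,R) = \gcd(1,n+2) = 1$ and $\al/\be = \la$ is a root of unity only for the excluded values $|n| \le 2$. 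As the index $2k+1$ is odd, only the top branches of (\ref{lm}) and (\ref{lp}) are relevant, giving
\[
L^-_{2k+1}\left(\sqrt{n+2},1\right) = \frac{\al^{2k+1}-\be^{2k+1}}{\al-\be} = \frac{\sin\big((2k+1)\theta/2\big)}{\sin(\theta/2)},
\]
\[
L^+_{2k+1}\left(\sqrt{n+2},1\right) = \frac{\al^{2k+1}+\be^{2k+1}}{\al+\be} = \frac{\cos\big((2k+1)\theta/2\big)}{\cos(\theta/2)}.
\]
Comparing with (\ref{airfoil}) and (\ref{1stairfoil}) and using $n/2 = \cos\theta$, the right-hand sides are precisely $W_k(n/2) = s_k(n)$ and $V_k(n/2) = r_k(n)$. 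As a sanity check, one can instead note that $k \mapsto \al^{2k+1}$ and $k \mapsto \be^{2k+1}$ both satisfy (\ref{srec}) (their common quadratic in the shift being $Y^2 - nY + 1$, equivalent to $\al^2 + \al^{-2} = n$), hence so do the sequences $L^\pm_{2k+1}$, and these match (\ref{inits}) and (\ref{rinits}) at $k = 0,1$ since $L^-_1 = 1$, $L^-_3 = \al^2 + \al\be + \be^2 = n+1$ and $L^+_1 = 1$, $L^+_3 = \al^2 - \al\be + \be^2 = n-1$.

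There is no substantial obstacle here; the proof is bookkeeping with the half-angle substitution. The only points requiring care are: (i) choosing the square root so that $\al = e^{\ri\theta/2}$, which is exactly what makes $\al^2 = \la$ and thereby links the half-angle Lehmer data to the integer-angle Chebyshev and Lucas data; (ii) observing that because the relevant index $2k+1$ is always odd, the even-index branches of (\ref{lm})--(\ref{lp}) never enter, so no extra trivial factors need to be tracked; and (iii) flagging the root-of-unity caveat for $|n| \le 2$, which is harmless since those cases are treated separately.
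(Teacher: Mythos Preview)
Your proof is correct and follows essentially the same approach as the paper: for the Lucas identities you make the identical substitution $\al=\la=e^{\ri\theta}=\be^{-1}$ and compare with (\ref{1st}), and for the Lehmer identities your ``sanity check'' via recurrence plus initial values at $k=0,1$ is exactly the paper's argument. Your additional direct verification through the half-angle substitution $\al=e^{\ri\theta/2}$ and comparison with the airfoil formulas (\ref{airfoil}), (\ref{1stairfoil}) is a nice complement that the paper only hints at in the subsequent Remark.
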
 
\begin{proof} 
The formulae for $\Tc_k$ and $\Uc_k$ follow immediately from  comparison of (\ref{1st}) with (\ref{luc}), requiring from (\ref{ladef}) that $\al =\la =e^{\ri\theta} =\be^{-1}$ in (\ref{lquad}). For the proof of the second part of the statement, note that 
taking $k=0,1$ gives $L_1^-(\sqrt{n+2},1)=1$ 
and 
$L_3^-(\sqrt{n+2},1)=n+1$, while a short calculation shows that  
$L^-_{2k+1}(\sqrt{n+2},1)$ satisfies the same recurrence  (\ref{srec}) as $s_k(n)$, 
and similarly for the other sequence given by 
$r_k(n)=(-1)^k s_k(-n)$; so for each equation in  (\ref{skleh}), the sequences 
given by their left/right-hand sides coincide. 
\end{proof} 

\begin{remark}  
There are also expressions for 
$r_k(n)$ and $s_k(n)$ 
in terms of 
dilated Chebyshev polynomials of the first/second kinds, respectively,  with 
argument $\sqrt{n+2}$: see  (\ref{p2f}) and (\ref{mainf}) below.  
\end{remark} 

By writing the roots of the polynomial $X^2-\sqrt{n+2}\,X +1$ as 
\beq\label{aldef} 
\al^{\pm 1} = \frac{\sqrt{n+2}\pm\sqrt{n-2}}{2}, 
\eeq 
we have an alternative way to identify  the terms in sequence  \seqnum{A269254}. 
\begin{corollary} {\bf (Alternative characterization of sequence   \seqnum{A269254}.)} \label{altdef}
For each $n\geq 3$, if $\al$ is defined by (\ref{aldef}), then $a_n$ is that positive integer $k$ yielding the smallest prime of
  the form 
\beq\label{alform}
\frac{\al^{2k+1} - \al^{-(2k+1)}}{\al-\al^{-1}}, 
\eeq 
or $a_n=-1$ 
if no such $k$ exists. 
\end{corollary}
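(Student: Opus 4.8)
The plan is to chain together the identifications already established in the excerpt. By Proposition~\ref{lehch}, the sequence $(\,s_k(n)\,)_{k\geq 0}$ that defines $a_n$ via Definition~\ref{adef} satisfies $s_k(n) = L^-_{2k+1}(\sqrt{n+2},1)$ for all $k$. So the first step is simply to unwind the definition of the Lehmer number $L^-_m$ with odd index $m=2k+1$: from (\ref{lm}), for odd $m$ one has $L^-_m(\sqrt{R},Q) = (\al^m-\be^m)/(\al-\be)$, where $\al,\be$ are the roots of $X^2-\sqrt{R}\,X+Q$. Here $R=n+2$ and $Q=1$, so $\al\be=1$, i.e.\ $\be=\al^{-1}$, and the explicit form of the roots is exactly (\ref{aldef}). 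Substituting $\be=\al^{-1}$ and $m=2k+1$ gives
\beq
s_k(n) = \frac{\al^{2k+1}-\al^{-(2k+1)}}{\al-\al^{-1}},
\eeq
which is precisely the expression (\ref{alform}).

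Second, I would check that the hypotheses needed to invoke the Lehmer-number machinery hold in the range $n\geq 3$: one needs $Q=1$ and $R=n+2$ coprime (immediate, since $Q=1$), and $\al/\be=\al^2$ not a root of unity. For $n\geq 3$ we have $n+2\geq 5$ and $n-2\geq 1$, so from (\ref{aldef}) the quantity $\al = (\sqrt{n+2}+\sqrt{n-2})/2$ is a real number strictly greater than $1$ (indeed $\al>1$ iff $\sqrt{n+2}+\sqrt{n-2}>2$ iff $n+2 > \ldots$, which holds for $n\geq 3$), hence $\al^2>1$ is real and cannot be a root of unity. This also incidentally confirms that the denominator $\al-\al^{-1}\neq 0$, so (\ref{alform}) is well defined. (The small cases $|n|\leq 2$, where $\al$ is a root of unity or $\al=1$, are exactly the degenerate sequences (\ref{modn})--(\ref{nm2}) already set aside, which is why the corollary is stated only for $n\geq 3$.)

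Third, I would match up the quantifier structure. Definition~\ref{adef} says: if some term $s_k(n)$ with $k\geq 1$ is prime, then $a_n$ is the least such $k$; otherwise $a_n=-1$. Having shown $s_k(n)$ equals the expression (\ref{alform}) for every $k$, ``$s_k(n)$ is prime for some $k\geq 1$'' is the same statement as ``the expression (\ref{alform}) is prime for some positive integer $k$'', and ``least such $k$'' transfers verbatim; phrasing the least value as ``that positive integer $k$ yielding the smallest prime of the form (\ref{alform})'' is equivalent because $s_k(n)$ is strictly increasing in $k$ for $n\geq 3$ (noted in the text following (\ref{nm2})), so the smallest index and the smallest prime value are attained together. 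The ``$a_n=-1$ if no such $k$ exists'' clause likewise carries over directly.

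There is essentially no obstacle here: the corollary is a restatement of Proposition~\ref{lehch} combined with unwinding (\ref{lm}) and (\ref{aldef}), and the only thing requiring a sentence of care is the monotonicity remark that lets ``smallest $k$'' and ``smallest prime value'' be used interchangeably, together with the observation that $\al>1$ for $n\geq 3$ so that all the formulas in play are well defined and the root-of-unity hypothesis of the Lehmer setup is met.
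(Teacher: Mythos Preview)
Your proposal is correct and follows the same approach as the paper: the corollary is stated there as an immediate consequence of Proposition~\ref{lehch} together with the explicit form (\ref{aldef}) of the roots, with no further proof given. Your write-up is in fact more careful than the paper's treatment, since you explicitly verify the Lehmer-number hypotheses for $n\geq 3$ and justify why ``smallest $k$'' and ``smallest prime value'' coincide via monotonicity.
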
 

The sequence  \seqnum{A269252} can be identified in terms of the characteristic roots of 
(\ref{aldef}) in a similar way.
\begin{corollary} {\bf (Alternative characterization of sequence   \seqnum{A269252}.)} \label{altrdef}
For each $n\geq 3$, if $\al$ is defined by (\ref{aldef}), then $\tilde{a}_n$ is that positive integer $k$ yielding the smallest prime of
  the form 
\beq\label{ralform}
\frac{\al^{2k+1} + \al^{-(2k+1)}}{\al+\al^{-1}}, 
\eeq 
or $\tilde{a}_n=-1$ 
if no such $k$ exists. 
\end{corollary}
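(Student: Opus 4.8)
The plan is to read this off directly from Proposition~\ref{lehch}, exactly as Corollary~\ref{altdef} was, but using the $+$ family of Lehmer numbers in place of the $-$ family. By (\ref{skleh}) we have $r_k(n)=L^+_{2k+1}\!\left(\sqrt{n+2},1\right)$ for all $k$, and since the index $2k+1$ is odd, the definition (\ref{lp}) gives
\beq
r_k(n) = \frac{\al^{2k+1}+\be^{2k+1}}{\al+\be},
\eeq
where $\al,\be$ are the roots of $X^2-\sqrt{n+2}\,X+1$, i.e.\ (\ref{lquad}) with $R=n+2$, $Q=1$. The product of the roots is $Q=1$, so $\be=\al^{-1}$, and (\ref{aldef}) records the explicit value $\al=\tfrac12\big(\sqrt{n+2}+\sqrt{n-2}\big)$; for $n\geq3$ this $\al$ is real and exceeds $1$, so $\al/\be=\al^{2}$ is not a root of unity and $\gcd(Q,R)=1$, which is exactly what makes the Lehmer sequence in Proposition~\ref{lehch} well defined. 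Substituting $\be=\al^{-1}$ turns the displayed formula into
\beq
r_k(n) = \frac{\al^{2k+1}+\al^{-(2k+1)}}{\al+\al^{-1}},
\eeq
which is the quantity (\ref{ralform}).

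It then remains only to check that ``smallest $k$'' and ``smallest prime'' refer to the same index, so that the corollary is merely a restatement of Definition~\ref{atdef}. For $n\geq3$ the sequence $(\,r_k(n)\,)_{k\geq0}$ is strictly increasing: one has $r_0=1<n-1=r_1$, and if $r_{k+1}>r_k>0$ then $r_{k+2}=n\,r_{k+1}-r_k>(n-1)\,r_{k+1}\geq 2\,r_{k+1}>r_{k+1}$, so the claim follows by induction. Hence the positive integer $k$ yielding the smallest prime value of (\ref{ralform}) is precisely the smallest $k\geq1$ with $r_k(n)$ prime, namely $\tilde{a}_n$ in the sense of Definition~\ref{atdef}; and if no $r_k(n)$ with $k\geq1$ is prime, then there is no such $k$ and $\tilde{a}_n=-1$, again matching.

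I do not expect a genuine obstacle here: the only points needing care are bookkeeping ones, namely that the trivial factor stripped off in passing from the Lucas number $\al^{2k+1}+\be^{2k+1}$ to the Lehmer number $L^+_{2k+1}$ at an odd index is $\al+\be$ (the even-index branch of (\ref{lp}) never arises, since $2k+1$ is always odd), and that the standing hypotheses $\gcd(Q,R)=1$ and ``$\al/\be$ not a root of unity'' hold --- both immediate for $n\geq3$. Otherwise the proof is verbatim that of Corollary~\ref{altdef}, with $L^-$ replaced by $L^+$ and the minus signs of (\ref{alform}) replaced by the plus signs of (\ref{ralform}).
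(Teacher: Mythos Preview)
Your proof is correct and follows essentially the same route as the paper, which does not give a separate argument for this corollary but simply indicates that it is obtained from Proposition~\ref{lehch} in the same way as Corollary~\ref{altdef}. You are in fact slightly more careful than the paper here: you explicitly verify that $(\,r_k(n)\,)_{k\geq0}$ is strictly increasing for $n\geq3$, so that ``smallest $k$ with $r_k(n)$ prime'' and ``$k$ giving the smallest prime value'' coincide, a point the paper leaves implicit at this stage (the analogous monotonicity for $s_k$ only appears later, in Proposition~\ref{inc}).
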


\section{Some surprising factorizations} \label{fact}

In this section we briefly recall some basic facts about sequences generated by linear recurrences, before looking at 
some special properties of the family of sequences $(\,s_k(n)\,)$.  We assume that all recurrences are defined over the field $\C$ of complex numbers.  (In the next  
section we will also consider recurrences in finite fields or residue rings.) 
For a broad review of linear recurrences in a more general setting, 
the reader is referred to \cite{epsw}. 

For what follows, it is convenient to make use of the forward shift, denoted $\rS$, which 
is a linear  operator that acts on any 
sequence $(f_k)$ with index $k$ according to 
$$\rS \, f_k = f_{k+1}.$$  
With this notation, the fact that a sequence $(x_k)$ satisfies a linear recurrence relation of 
order $N$ with constant coefficients can be expressed in the form 
\beq\label{linrel}
F(\rS) \, x_k =0, 
\eeq 
where  $F$ (of degree $N$) is  the characteristic polynomial of the recurrence.

\begin{definition} A {\it decimation} of a sequence $(x_k)_{k\in\Z}$ is any subsequence of the form 
$(x_{i+dk})_{k\in\Z}$, for some fixed integers $i,d$, with $d\geq 2$. A particular name for the  case $d=2$  is a 
{\it bisection}, $d=3$ is  a
{\it trisection}, and in general this is a decimation 
of order $d$.  
\end{definition}   

\begin{remark} The case of decimations of linear recurrences defined over finite fields is considered in \cite{dm}. 
\end{remark}

Since, at least in the case that all the roots $\la_1,\la_2,\ldots, \la_N$ of $F$ are distinct, 
the general solution of  (\ref{linrel}) can be written as a linear combination of  $k$th powers of the $\la_j$, 
it is apparent that 
the terms of a decimation 
of order $d$ are given by 
$
x_{i+dk} = \sum_{j=1}^NA_j\,\la_j^{dk}
$, 
for some coefficients $A_j$. 
Hence  
the  decimation 
 satisfies the linear recurrence  
\beq \label{diss} 
\prod_{j=1}^N (\rS -\la_j^d) \, x_{i+dk} = 0. 
\eeq 
(The recurrence for the decimation  
has the same  form  in the case of repeated roots.) 
Decimations of the sequence $(\,s_k(n)\,)$ will be considered in Proposition \ref{linrels} in the next section. 

Given two sequences $(x_k)$, $(y_k)$ that satisfy linear recurrences of order $N,M$ respectively, the  product 
sequence $$(z_k)=(x_ky_k)$$ also satisfies a linear recurrence. The following result is well known. 
\begin{theorem}\label{prodt} 
The product $(z_k)=(x_ky_k)$ of two sequences that satisfy linear recurrences of order $N,M$ satisfies a 
linear recurrence of order at most $NM$.  
\end{theorem}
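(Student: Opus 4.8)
The plan is to diagonalize (or triangularize) both recurrences and read off a characteristic polynomial for the product sequence whose degree is at most $NM$. First I would invoke the standard structure theorem for linear recurrence sequences: writing $F(\rS)x_k=0$ and $G(\rS)y_k=0$ with $\deg F=N$, $\deg G=M$, and letting $\la_1,\dots,\la_N$ be the roots of $F$ and $\mu_1,\dots,\mu_M$ the roots of $G$, in the case of distinct roots we may write $x_k=\sum_{i=1}^N A_i\la_i^k$ and $y_k=\sum_{j=1}^M B_j\mu_j^k$ for suitable constants $A_i,B_j\in\C$ (exactly as the excerpt does for decimations). Then the product is
\beq
z_k = x_k y_k = \sum_{i=1}^N\sum_{j=1}^M A_i B_j\,(\la_i\mu_j)^k,
\eeq
so $z_k$ is a linear combination of $k$th powers of the $NM$ numbers $\la_i\mu_j$. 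Hence $(z_k)$ satisfies the linear recurrence
\beq
H(\rS)\,z_k=0, \qquad H(X)=\prod_{i=1}^N\prod_{j=1}^M (X-\la_i\mu_j),
\eeq
and $\deg H\le NM$ (with equality only when the products $\la_i\mu_j$ are pairwise distinct; in general distinct values are repeated, which only lowers the order). Note $H$ has coefficients in $\C$, and if one wants it over $\Q$ or $\Z$ one observes that $H$ is a symmetric function of the $\la_i$ and of the $\mu_j$ separately, hence expressible in the coefficients of $F$ and $G$; but for the statement as given, working over $\C$ suffices.

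The main obstacle is the case of repeated roots, where the pure-exponential basis is replaced by terms of the form $k^a\la_i^k$. I would handle this by the same device the excerpt already signals parenthetically: over $\C$ every sequence annihilated by $F(\rS)$ lies in the span of $\{\,k^a\la_i^k : 0\le a<m_i\,\}$, where $m_i$ is the multiplicity of $\la_i$, and similarly for $y_k$. Multiplying a term $k^a\la_i^k$ by $k^b\mu_j^k$ gives $k^{a+b}(\la_i\mu_j)^k$, which is annihilated by $(\rS-\la_i\mu_j)^{a+b+1}$, hence by $(\rS-\la_i\mu_j)^{m_i+m_j-1}$. Therefore $(z_k)$ is annihilated by
\beq
\tilde H(\rS)=\prod_{(i,j)}(\rS-\la_i\mu_j)^{m_i+m_j-1},
\eeq
where the product is over distinct values of $\la_i\mu_j$ and for each such value one takes the maximum of $m_i+m_j-1$ over the contributing pairs. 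A short counting argument then shows $\deg\tilde H\le NM$: the span of $\{k^{a+b}(\la_i\mu_j)^k\}$ over all $i,j$ and all $0\le a<m_i$, $0\le b<m_j$ has dimension at most $\sum_{i,j}m_i m_j=(\sum_i m_i)(\sum_j m_j)=NM$, and $(z_k)$ lies in this span, so its minimal recurrence has order at most $NM$.

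Alternatively, and perhaps more cleanly for a self-contained elementary proof, I would avoid root multiplicities entirely via a tensor-product / companion-matrix argument: if $A$ is the $N\times N$ companion matrix of $F$ and $B$ the $M\times M$ companion matrix of $G$, then there are vectors so that $x_k=u^\top A^k v$ and $y_k=u'^\top B^k v'$, whence $z_k=x_ky_k=(u\otimes u')^\top (A\otimes B)^k (v\otimes v')$, exhibiting $(z_k)$ as a linear recurrence sequence whose characteristic polynomial divides that of the $NM\times NM$ matrix $A\otimes B$. This immediately gives order at most $NM$ with no case distinction, since the Kronecker product packages the repeated-root bookkeeping automatically. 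Either route completes the proof; I would present the eigenvalue argument as the main line and mention the Kronecker-product version as the uniform way to dispatch the repeated-root case, which is the only genuinely fiddly point.
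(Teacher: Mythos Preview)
Your argument is correct, and the main line for distinct characteristic roots is exactly what the paper does: write $x_k$ and $y_k$ as linear combinations of $\la_i^k$ and $\mu_j^k$, multiply, and observe that $z_k$ is annihilated by $\prod_{i,j}(\rS-\la_i\mu_j)$, of degree at most $NM$.

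Where you differ is in actually treating the repeated-root case. The paper only handles the generic (distinct-root) situation and for the general case simply refers the reader to Zierler and Mills \cite{zm}. You instead give two self-contained arguments: the generalized-eigenfunction count (products $k^a\la_i^k\cdot k^b\mu_j^k$ spanning a shift-invariant space of dimension at most $\sum_{i,j}m_im_j=NM$), and the companion-matrix/Kronecker-product argument $z_k=(u\otimes u')^\top(A\otimes B)^k(v\otimes v')$ together with Cayley--Hamilton. Both are valid; the tensor-product version is the cleaner of the two since it avoids any bookkeeping with multiplicities, and it is in fact more complete than what the paper itself provides. One small point worth making explicit in your first version is that the span you describe is indeed $\rS$-invariant (because for each fixed base $\la_i\mu_j$ the set of exponents $a+b$ that occur is downward closed), which is what justifies passing from ``$z$ lies in a space of dimension $\le NM$'' to ``$z$ satisfies a recurrence of order $\le NM$''.
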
   
To prove the theorem in the generic situation where the recurrences for $(x_k)$, $(y_k)$ both have 
distinct characteristic roots, given by  
$\la_i$, $1\leq i\leq N$ and  
$\mu_j$, $1\leq j\leq M$ respectively, observe that each product $\nu_{i,j}= \la_i\mu_j$ is a characteristic root for the linear recurrence satisfied by $(z_k) $,  
that is 
\beq\label{nusum}
\prod_{i,j}(\rS - \nu_{i,j}) \, z_k =0, 
\eeq
where the sum is over a maximum set of $i,j$ that give distinct $\nu_{i,j}$; so 
if the $\nu_{i,j}$ are all different from each other 
then the order of the recurrence is exactly $MN$, but the order could be smaller if 
some of the $\nu_{i,j}$ coincide. For the general situation with repeated roots, see \cite{zm}. 

We now consider an observation  concerning the sequences $(\,s_k(n)\,)$  for the special values 
$n=j^2-2$ where $j\in \Z$, which includes the cases $n=7,14,23,34$ that have $a_n=-1$ in (\ref{anseq}). 
The  fact is that for all these values, there is a  
 factorization  of the form 
\beq\label{t2fac} 
s_k(j^2-2) =r_k(j) s_k(j) ,
\eeq
where both factors on the right-hand side above satisfy a 
linear recurrence of second order. This is  surprising, because in the light of Theorem \ref{prodt} one would naively 
expect such a product to satisfy a recurrence of order 4. 
\begin{theorem}\label{t2} 
For the values $n=j^2-2$, the terms of the sequence $(\,s_k(n)\,)$ admit the factorization 
(\ref{t2fac}), 
where $r_k(j)$ satisfies the same recurrence as $s_k(j)$, that is 
\beq\label{p2} 
r_{k+2}(j) -j\, r_{k+1}(j) +r_k(j)=0, 
\eeq 
with the  initial values 
\beq\label{rjinits} 
r_0(j)=1, \qquad r_1(j)=j-1. 
\eeq 
Thus for all $j\in\Z$ the formula (\ref{t2fac}) expresses $s_k(j^2-2)$ as a product of two integers. 
\end{theorem}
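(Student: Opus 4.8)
The plan is to prove the factorization \eqref{t2fac} by passing to the characteristic roots and using the trigonometric (or root-power) descriptions of the three sequences involved. Recall from Proposition~\ref{lehch} that $s_k(n) = L^-_{2k+1}(\sqrt{n+2},1) = \frac{\al^{2k+1}-\al^{-(2k+1)}}{\al-\al^{-1}}$, where $\al^{\pm1} = \frac{\sqrt{n+2}\pm\sqrt{n-2}}{2}$, and similarly $r_k(n) = L^+_{2k+1}(\sqrt{n+2},1) = \frac{\al^{2k+1}+\al^{-(2k+1)}}{\al+\al^{-1}}$. So the first step is: set $n = j^2-2$, so that $n+2 = j^2$ and hence $\sqrt{n+2} = j$ (taking $j>0$ first; the sign will not matter by \eqref{sym} and \eqref{rsrel}). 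Write $\be^{\pm1} = \frac{j \pm \sqrt{j^2-4}}{2}$ for the characteristic roots associated to parameter $j$, so $\be + \be^{-1} = j$ and $\be\,\be^{-1}=1$; then $\al^{\pm2} = \be^{\pm1}$ is exactly the relation between the two root systems, since $\al^2 + \al^{-2} = (n+2)-2 = n = j^2-2 = \be+\be^{-1} - \text{(correction)}$. I would verify this identity carefully: from $\al+\al^{-1} = ?$ — actually one checks $\al^2\al^{-2}=1$ and $\al^2+\al^{-2} = \bigl(\frac{\sqrt{n+2}+\sqrt{n-2}}{2}\bigr)^2 + \bigl(\frac{\sqrt{n+2}-\sqrt{n-2}}{2}\bigr)^2 = \frac{(n+2)+(n-2)}{2} = n$, while $\be+\be^{-1}=j$ and $\be\be^{-1}=1$ give $\be,\be^{-1}$ as roots of $X^2-jX+1$, so $(\al^2)^2 - j\al^2 + 1 = \al^4 - j\al^2+1$; dividing by $\al^2$ this is $\al^2+\al^{-2}-j = n-j$, which is not zero. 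So instead the correct matching is $\al^2 = $ one of the $j$-roots only after a further adjustment — the clean statement is that $\be := \al^2$ satisfies $\be+\be^{-1} = n+2 - 2 \cdot(\text{something})$; I will nail this down by instead using the trigonometric form, which is cleaner.

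Concretely: put $n = 2\cos 2\phi$ so that, comparing with \eqref{airfoil}, $s_k(n) = W_k(\cos 2\phi) = \frac{\sin((2k+1)\phi\cdot 2/... )}{\dots}$ — more usefully, from \eqref{skleh} write $\sqrt{n+2} = \sqrt{2\cos2\phi + 2} = 2|\cos\phi|$, so the associated $\al = e^{\ri\psi}$ with $2\cos\psi = \sqrt{n+2} = 2\cos\phi$, i.e.\ $\psi = \phi$. Then $s_k(n) = \frac{\sin((2k+1)\phi)}{\sin\phi}$ and, with $j = \sqrt{n+2} = 2\cos\phi$, one has $s_k(j) = \frac{\sin((2k+1)\phi/... )}{\dots}$ using \eqref{airfoil} with $j/2 = \cos\phi$: $s_k(j) = W_k(\cos\phi) = \frac{\sin((2k+1)\phi/ ... )}{\dots}$ — here the half-angle in \eqref{airfoil} means $s_k(j) = \frac{\sin((2k+1)\phi/2)}{\sin(\phi/2)}$ if $j = 2\cos(\phi)$... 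I must be careful about which angle is halved. The upshot I expect: with the right substitution $n = 2\cos\theta$ and $j$ chosen so $j = 2\cos(\theta/2)$ (legitimate since $n+2 = j^2$ forces $\sqrt{n+2} = 2\cos(\theta/2)$), the three sequences become $s_k(n) = \frac{\sin((2k+1)\theta/2)}{\sin(\theta/2)}$, $s_k(j) = \frac{\sin((2k+1)\theta/4)}{\sin(\theta/4)}$, $r_k(j) = \frac{\cos((2k+1)\theta/4)}{\cos(\theta/4)}$, and the product identity \eqref{t2fac} reduces to the double-angle identity $\sin((2k+1)\theta/2) = 2\sin((2k+1)\theta/4)\cos((2k+1)\theta/4)$ combined with $\sin(\theta/2) = 2\sin(\theta/4)\cos(\theta/4)$. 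That is the heart of the proof.

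Having established the identity for real $\theta$ (equivalently on a Zariski-dense set of $n$), I would then invoke the polynomial identity principle: both sides of \eqref{t2fac} are polynomials in $j$ with integer coefficients (the left side is $s_k(j^2-2)$, a polynomial in $j^2$ hence in $j$; the right side is a product of two integer polynomials in $j$), and they agree for infinitely many integer values of $j$ (all $j\geq 3$, say, where the trigonometric argument applies via $\theta \in (0,\pi)$), hence they agree identically as polynomials in $j$, and therefore for all $j\in\Z$ including the small and negative cases. An alternative, perhaps cleaner, route that avoids trigonometry entirely: show directly that the product $r_k(j)\,s_k(j)$ satisfies the recurrence \eqref{srec} with parameter $n = j^2 - 2$ and has the correct initial values $1$ and $n+1$. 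Here one uses Theorem~\ref{prodt}: the characteristic roots of $(s_k(j))$ and of $(r_k(j))$ are both $\{\be,\be^{-1}\}$ with $\be + \be^{-1} = j$, so the product sequence has characteristic roots among $\{\be^2, 1, \be^{-2}\}$; but the constant root $1$ does not actually occur because — and this is the key cancellation to check — the ``cross terms'' $\al^{2k+1}\al^{-(2k+1)}$-type contributions conspire so that $r_k(j)s_k(j) = \frac{\be^{2(2k+1)/2}... }{...}$ collapses to a combination of $(\be^2)^k$ and $(\be^{-2})^k$ only; then $\be^{\pm2}$ are the roots of $X^2 - (j^2-2)X + 1$, which is exactly the characteristic polynomial of \eqref{srec} with $n = j^2-2$. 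Checking $r_0(j)s_0(j) = 1\cdot 1 = 1 = s_0(j^2-2)$ and $r_1(j)s_1(j) = (j-1)(j+1) = j^2 - 1 = (j^2-2)+1 = s_1(j^2-2)$ then finishes it.

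The main obstacle is purely bookkeeping: getting the half-angle substitution consistent across the three polynomial families, since \eqref{airfoil} and \eqref{1stairfoil} already contain a built-in half-angle ($W_k(\cos\theta)$ involves $\sin((2k+1)\theta/2)$), so the relation $n+2 = j^2$ introduces a \emph{second} halving and one must track $\theta$, $\theta/2$, $\theta/4$ without error. I expect the recurrence-plus-initial-conditions approach (the alternative route) to be the most robust way to present it, with the trigonometric identity relegated to a remark, since verifying that $r_k(j)s_k(j)$ kills the spurious root $X=1$ is a two-line computation with the explicit root formulas, and checking two initial values is trivial; the polynomial identity principle then extends \eqref{t2fac} from $j\geq 3$ to all integers $j$, and the final sentence of the statement (that $s_k(j^2-2)$ is a product of two integers) is immediate since $r_k(j), s_k(j) \in \Z$ for $j \in \Z$ by their integer recurrences \eqref{p2}--\eqref{rjinits}.
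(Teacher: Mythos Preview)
Your final ``alternative route'' (show that the product $r_k(j)\,s_k(j)$ satisfies the recurrence \eqref{srec} with parameter $n=j^2-2$ and has the correct initial values $1$ and $j^2-1$) is correct and is essentially the paper's argument run in reverse. The paper starts from $s_k(n)$ in the Lehmer form \eqref{alform} with $\al+\al^{-1}=\sqrt{n+2}=j$, applies difference of squares to numerator and denominator to obtain \eqref{dos}, and identifies the two factors with $r_k(j)$ and $s_k(j)$ via \eqref{skleh} (now with $\al^{1/2}$ playing the role of the Lehmer root for parameter $j$). Your ``key cancellation'' that the spurious root $1$ drops out of the product is precisely this difference-of-squares identity viewed from the other side; indeed Remark~\ref{klee} notes that an inductive version of your argument was given independently by Klee. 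So the two approaches are the same idea with different entry points: the paper's is a one-line top-down factorization, yours a bottom-up verification.

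Your first attempt stumbled only on bookkeeping: with $n=j^2-2$, the $\al$ of \eqref{aldef} already satisfies $\al+\al^{-1}=\sqrt{n+2}=j$, so your $\al$ \emph{equals} your $\be$, not $\al^2=\be$. The further halving you were hunting for is $\al^{1/2}$, satisfying $\al^{1/2}+\al^{-1/2}=\sqrt{j+2}$, which is the \eqref{aldef}-type root for parameter $j$; once you see this, your first approach \emph{is} the paper's proof \eqref{dos}. Finally, your appeal to the polynomial identity principle is unnecessary: once \eqref{t2fac} is established as an identity of polynomials in $j$ (which both routes do), it holds for every $j\in\Z$ automatically.
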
 
\begin{proof} 
In the case $n=j^2-2$, the formula (\ref{aldef}) fixes the characteristic  roots of the recurrence (\ref{srec}) 
as $\la=\al^2$, $\la^{-1}=\al^{-2}$, where $\al = (j+\sqrt{j^2-4})/2$; so $\al+\al^{-1}=j$, and the square root of 
$\al$ can be fixed so that $\al^{1/2}+\al^{-1/2}=\sqrt{j+2}$. Then, by applying the difference of two squares to the numerator and denominator of (\ref{alform}), it follows that 
\beq\label{dos}  
s_k(j^2-2) 
= \left(\frac{\al^{(2k+1)/2}+\al^{-(2k+1)/2}}{\al^{1/2}+\al^{-1/2}}\right)\, 
\left(\frac{\al^{(2k+1)/2}-\al^{-(2k+1)/2}}{\al^{1/2}-\al^{-1/2}}\right) 
\eeq 
which is the factorization (\ref{t2fac}) with 
$r_k(j)$  
and $s_k(j)$ 
given by making the replacement  $n \to j$ in  
(\ref{skleh}). 
(For an alternative expression for these factors, see (\ref{rsu}) in 
Remark \ref{altf} below.) 
Each of the factors above is a linear combination of $k$th powers of the characteristic roots $\al, \al^{-1}$, and $r_k(j)=(-1)^k s_k(-j)$ as in (\ref{rsrel}), so they each 
satisfy the same recurrence 
(\ref{p2}) with an appropriate set of initial values. 
\end{proof} 

\begin{remark}\label{klee}  
Generically, the product of any two solutions of the recurrence (\ref{p2}) would have 
three characteristic roots, namely  
$\al^2,\al^{-2},1$, giving a recurrence of order 3 in (\ref{nusum}), but the potential root 1 cancels from the product 
(\ref{dos}), giving the second-order  recurrence (\ref{srec}) with $n=j^2-2$. 
An inductive proof of the preceding result was given by Klee in a post to the Seqfan mailing list: see \cite{klee} for 
details. However, the factorization (\ref{dos}) in the form 
$   L^-_{2k+1}(j,1) =  L^+_{2k+1}(\sqrt{j+2},1)\, L^-_{2k+1}(\sqrt{j+2},1)$ appears to be well known in the literature on Lehmer numbers; see e.g.\ \cite{caldwell} 
and references.\footnote{In particular, see  \url{http://primes.utm.edu/top20/page.php?id=47 } 
for a sketch of a proof of Theorem \ref{t2}.}
\end{remark} 
\begin{example}
In the case $n=7$, there is the factorization 
$$ 
s_k(7)=r_k(3)\,s_k(3), 
$$ 
where the first terms of the factor sequences are 
$$ \begin{array}{l}
(\,r_k(3)\,):  1,2,5,13,34,89,233,610,1597,\ldots , \\
(\,s_k(3)\,):  1,4,11,29,76,199,521,1364,3571,\ldots,
\end{array} 
$$ 
which multiply together to give the terms in (\ref{n7}). Since both $(\,r_k(3)\,)$
and $(\,s_k(3)\,)$ are strictly increasing  sequences, it follows that  $s_k(7)$ is composite for 
all $k\geq 1$, and hence $a_7=-1$, as asserted previously.
\end{example}

As a consequence of the factorization (\ref{t2fac}), one can show similarly that for all integers $j\geq 3$, the terms $s_k(j^2-2)$ 
are composite for $k\geq 1$, and thus $a_{j^2-2}=-1$ for all $j\geq 3$ (for full details, see the proof of 
Theorem \ref{maint} below). In particular, Theorem \ref{t2} accounts for all the values $n=7,14,23,34$ with $a_n=-1$ that are shown in the list (\ref{anseq}).

The question is now whether there are other cases with $a_n=-1$, for which $n\neq j^2 -2$ for 
some $j$. 
It turns out that the answer to this question is affirmative, and the first case with $a_n=-1$  that does not fit into the above pattern is  $n=110$ \cite{setal}.  

\begin{example}\label{n110}  
The sequence $(\,s_k(110)\,)_{k\geq 0}$, beginning with  
\beq\label{n110s} \begin{array}{l}
1, 111, 12209, 1342879, 147704481, 16246150031, 1786928798929, \\
196545921732159, \ldots , 
\end{array}
\eeq 
appears as number  \seqnum{A298677} in the OEIS. To see that none of the terms are prime, first of all note that the sequence 
$(\,s_k(110)\, \bmod 111\,)$ is periodic with period 3: it is equivalent to the sequence (\ref{nm1}); this observation  is 
a special case of  Lemma \ref{resper} below. Thus it is helpful to consider the three 
trisections $(\,s_{3k+i}(110)\,)$ for $i=0,1,2$, each of which satisfy the 
second-order recurrence 
\beq\label{tris} 
s_{3(k+2)+i}(110) - 1330670\,s_{3(k+1)+i}(110)+ s_{3k+i}(110)=0, 
\eeq 
as follows by applying the formula (\ref{diss}).
The easiest case is $i=1$, since $s_{3k+1}\equiv 0 \pmod{111}$ for all 
$k$; 
so in this subsequence, the first term $111=3\times 37$ is composite, and subsequent terms 
$ 147704481=111\times 1330671$, $196545921732159=111\times 1770683979569$, etc. are all 
multiples of 111. The trisection $(\,s_{3k}(110)\,)$ is the subsequence beginning with 
$s_0(110)=1$, and then $s_3(110)=1342879=9661\times 139$, 
$s_6(110)=1786928798929=116876761\times 15289$, and by 
induction it can be shown that each of these terms is divisible by the 
corresponding one for the sequence $(\,s_{3k}(5)\,)=1,139,15289,\ldots$, so 
that 
\beq\label{trifac} 
s_{3k}(110) = R_{3k}(5) \, s_{3k}(5), 
\eeq 
where the integer sequence of prefactors satisfies the third order recurrence 
\beq\label{3rdo} 
R_{3(k+3)}(5) -12099\, \Big( R_{3(k+2)}(5) - R_{3(k+1)}(5)\Big) -R_{3k}(5) = 0.
\eeq 
Similarly, for the remaining trisection, namely 
$(\,s_{3k+2}(110)\,)$, one has 
\beq\label{trifac2} 
s_{3k+2}(110) = R_{3k+2}(5)\, s_{3k+2}(5), 
\eeq 
where the prefactor sequence $(\, R_{3k+2}(5)\,)$ consists of integers and satisfies the same recurrence 
(\ref{3rdo}). In fact, it is not necessary to consider this trisection separately, since its properties follow 
immediately from 
extending  $(\,s_{3k}(110)\,)$ to $k<0$ and using the symmetry (\ref{sym}). 
These observations show that all the terms in (\ref{n110s}) are composite 
for $k>0$, confirming that $a_{110}=-1$ as claimed. Moreover, 
for all $k$ there is a factorization 
\beq\label{trifac3} 
s_{k}(110) = R_{k}(5) \,s_{k}(5), 
\eeq
where 
\beq\label{3rdo2} 
R_{k+3}(5) -24\, \Big( R_{k+2}(5) - R_{k+1}(5)\Big) -R_{k}(5) = 0, 
\eeq 
but the prefactors making up the full sequence $(\,R_k(5)\,)_{k\geq 0}$, that is 
$$1,\frac{37}{2},421,9661,\frac{443557}{2},5091241,116876761,
\frac{5366148517}{2}, \ldots,$$
are  only  integers in the cases (\ref{trifac}) and  (\ref{trifac2}), and not when $k\equiv 1 \pmod{3}$.
\end{example} 

The values of $n$ with $a_n=-1$ mentioned so far all have one thing in common: they 
correspond to values of dilated Chebyshev polynomials of the first kind. Indeed, the 
four -1 terms displayed in (\ref{anseq}) appear at the index values 
$$7=\Tc_2(3), \quad 14=\Tc_2(4), \quad 23=\Tc_2(5), \quad
34=\Tc_2(6), $$ 
and Theorem (\ref{t2}) implies that 
$s_k(n)$ is composite for all $k\geq 1$  when $n=\Tc_2(j)$, $j\geq 3$, while 
$$ 
110 = \Tc_3(5). 
$$ 
It turns out that for any Chebyshev value $n=\Tc_p(j)$ with $p>1$, there is a factorization analogous to (\ref{t2fac}) or (\ref{trifac3}): 
see Theorem \ref{factors} below. 
Due to the identity (\ref{tid}), it is sufficient to consider  the case of prime $p$ only. 

The curious reader might wonder why the values $n=18=\Tc_3(3)$ and 
$n=52=\Tc_3(4)$ are missing from the discussion. The reason is that, 
although there is a factorization 
analogous to  (\ref{trifac3}) for these values of $n$, there are the prime terms $s_1(18)=19$ and 
$s_1(52)=53$, which imply that $a_{18}=1=a_{52}$; 
but it turns out that there are no 
other primes in the sequences $(\, s_k(n)\, )_{k\geq 0}$ for $n=18$ or $52$. 
See Theorem \ref{maint} for a more general statement which includes all these 
Chebyshev values.

\section{General properties of the defining sequences} 

By writing the general  solution of (\ref{srec}) in terms of the roots of 
its characteristic quadratic, and  using various expressions for the 
dilated 
Chebyshev polynomials, as in 
section 2, 
we immediately obtain a number of equivalent explicit formulae for the 
sequence $(\,s_k(n)\,)$.
\begin{proposition} 
The terms of the sequence generated by (\ref{srec}) with the initial values (\ref{inits}) 
are given explicitly  by 
\beq\label{expl} 
s_k(n) = \frac{\la^{k+1} - \la^{-k}}{\la -{1}} 
= \Uc_{k-1}(n) + \Uc_k(n),
\eeq
where $\la$ is given in terms of $n$ according to (\ref{ladef}), and by 
\beq\label{mainf} 
s_k(n) = \Uc_{2k}(\sqrt{n+2}) 
= \frac{ \sin\Big( (2k+1)\theta /2 \Big) } 
{ \sin ( \theta /2 ) } , 
\eeq 
and they have the generating function 
\beq\label{gf} 
G(X,n): = \sum_{j=0}^\infty s_j(n)\, X^j =\frac{1+X}{1-nX+X^2}. 
\eeq
\end{proposition}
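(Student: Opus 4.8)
The statement to prove collects three explicit descriptions of $(\,s_k(n)\,)$: two closed forms in terms of $\la$ and of dilated Chebyshev polynomials, together with a rational generating function. My plan is to handle each piece in turn, using only the recurrence \eqref{srec}, the initial conditions \eqref{inits}, and the trigonometric/closed-form properties of $\Tc_k,\Uc_k$ recalled in Section 2.

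\textbf{The first formula \eqref{expl}.} The general solution of \eqref{srec} with distinct characteristic roots $\la,\la^{-1}$ is $s_k(n)=A\la^k+B\la^{-k}$; imposing $s_0=1$, $s_1=n+1=\la+\la^{-1}+1$ and solving the resulting $2\times 2$ linear system for $A,B$ gives, after simplification, $A=\la/(\la-1)$ and $B=-1/(\la-1)$, i.e.\ $s_k(n)=(\la^{k+1}-\la^{-k})/(\la-1)$. For the equality with $\Uc_{k-1}(n)+\Uc_k(n)$, I would use the trigonometric definition \eqref{1st}: writing $\la=e^{\ri\theta}$, the left side is $\big(e^{\ri(k+1)\theta}-e^{-\ri k\theta}\big)/\big(e^{\ri\theta}-1\big)$, and multiplying numerator and denominator by $e^{-\ri\theta/2}$ turns this into $\sin\!\big((2k+1)\theta/2\big)/\sin(\theta/2)$; expanding $\sin\!\big((2k+1)\theta/2\big)=\sin\!\big((k+\tfrac12)\theta\big)$ via the angle-addition used already in \eqref{sine}, or simply invoking that $\Uc_{k-1}+\Uc_k$ satisfies \eqref{srec} with the right initial values $\Uc_{-1}+\Uc_0=0+1=1$ and $\Uc_0+\Uc_1=1+n$, identifies it with $s_k(n)$ by the usual uniqueness argument. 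The cases $|n|\le 2$ where $\la=\la^{-1}$ can be dispatched by continuity in $n$, or noted to be covered by the polynomial identity in $n$.

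\textbf{The second formula \eqref{mainf}.} This is essentially a restatement of \eqref{airfoil}: from $s_k(n)=\sin\!\big((2k+1)\theta/2\big)/\sin(\theta/2)$ already obtained, and the fact that if $n=2\cos\theta$ then $n+2=2+2\cos\theta=4\cos^2(\theta/2)$, so $\sqrt{n+2}=2\cos(\theta/2)$. Applying the definition of $\Uc_{2k}$ from \eqref{1st} with argument $\sqrt{n+2}=2\cos(\theta/2)$ gives $\Uc_{2k}(\sqrt{n+2})=\sin\!\big((2k+1)(\theta/2)\big)/\sin(\theta/2)$, which is exactly $s_k(n)$. Alternatively, one can verify it algebraically via Proposition~\ref{uprop}, but the trigonometric route is cleanest.

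\textbf{The generating function \eqref{gf}.} The standard manipulation: set $G(X,n)=\sum_{j\ge 0}s_j(n)X^j$ and compute $(1-nX+X^2)G(X,n)$ by shifting indices. The coefficient of $X^j$ for $j\ge 2$ is $s_j-n\,s_{j-1}+s_{j-2}=0$ by \eqref{srec}; the coefficient of $X^0$ is $s_0=1$; the coefficient of $X^1$ is $s_1-n\,s_0=(n+1)-n=1$. Hence $(1-nX+X^2)G(X,n)=1+X$, giving \eqref{gf}. I expect no real obstacle here; the only care needed is the bookkeeping of the low-order terms, and (if one wants to be scrupulous) a remark that this is an identity of formal power series, so no convergence question arises.

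\textbf{Main obstacle.} None of the three parts is genuinely hard; the "obstacle" is purely organizational — namely making sure the degenerate cases $n\in\{-2,-1,0,1,2\}$, where the characteristic roots coincide or the formula $\la=(n+\sqrt{n^2-4})/2$ leaves the reals, are not silently assumed away in \eqref{expl}. The clean fix is to prove everything first as identities of polynomials in $n$ (equivalently, of rational functions/power series over $\Q(n)$), where $\la$ is just a formal symbol with $\la+\la^{-1}=n$, and only afterwards specialize $n$ to integers; this sidesteps the square-root and repeated-root issues entirely. With that framing, each of \eqref{expl}, \eqref{mainf}, \eqref{gf} reduces to the uniqueness-of-solutions argument for \eqref{srec} plus elementary identities for $\Uc_k$ already established in Section~2.
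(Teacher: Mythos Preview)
Your proposal is correct and follows essentially the same route as the paper: both establish \eqref{expl} by identifying $s_k$ with a specific linear combination of the two characteristic solutions $\la^k,\la^{-k}$ (the paper phrases this by pointing back to \eqref{alform} with $\la=\al^2$, you solve the $2\times 2$ system explicitly), and both obtain \eqref{mainf} via the substitution $\sqrt{n+2}=2\cos(\theta/2)$ and the trigonometric definition of $\Uc_{2k}$. The one genuine methodological difference is the generating function: the paper derives \eqref{gf} by inserting the closed form $s_k=(\la^{k+1}-\la^{-k})/(\la-1)$ and summing the two resulting geometric series, whereas you multiply $G(X,n)$ by $1-nX+X^2$ and read off the low-order coefficients directly from the recurrence. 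Your route is slightly more self-contained (it does not rely on \eqref{expl} having been proved first, and avoids the $\la=\la^{-1}$ degeneracy altogether), while the paper's route has the virtue of making the partial-fraction structure of $G$ visible; either is perfectly adequate here. Your remark on handling the degenerate values $|n|\leq 2$ by working with polynomial identities in $n$ is also a clean way to deal with the issue the paper relegates to a footnote.
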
 
\begin{proof}
The first formula in (\ref{expl}) is equivalent to (\ref{alform}), with $\la=\al^2$, and the other one  
follows by rewriting the Chebyshev polynomials as linear combinations of $\la^k$ and $\la^{-k}$, which 
generically provide two independent 
solutions of (\ref{srec}).\footnote{The first equality 
is invalid when $n=\pm 2$, due to  
repeated 
roots $\la=\la^{-1}=\pm 1$, 
cf.\ (\ref{lin}) and (\ref{nm2}). 
} 
For the latter set of identities, 
let $m=\sqrt{n+2}$, and note that $\Tc_2(m)=n$, so  $\theta$ can always be chosen such that 
$m=2\cos(\theta/2)$. 
The expression on the far right-hand side of (\ref{mainf}) is obtained by 
 by applying (\ref{sine}) to 
the last equality in (\ref{expl}), or by setting 
$\al=e^{\ri\theta/2}$ in (\ref{alform}), 
and this expression equals $\Uc_{2k}(2\cos(\theta/2))=\Uc_{2k}(m)$.
The generating function (\ref{gf}) follows from using the 
first formula in (\ref{expl}) and summing a pair of geometric series. 
\end{proof}

\begin{remark} \label{altf} 
The last formula in (\ref{expl}) together with (\ref{rsrel})  shows that
the terms on the right-hand side of the factorization (\ref{t2fac}) 
in the case $n=\Tc_2(j)=j^2-2$ can also be written as 
\beq\label{rsu} 
r_k(j) =  \Uc_k(j)- \Uc_{k-1}(j), \qquad 
s_k(j) = \Uc_k(j)+  \Uc_{k-1}(j). 
\eeq 
\end{remark}

If $5/2<n\in\R$ then $\la>2$, so $\la^{-k}/(\la -1)<1$ for all $k\geq 0$, and so we have 

\begin{corollary} \label{floorf}
For all real $n>5/2$, the terms $s_k(n)$ for $k\geq 0$ are given by 
$$ 
s_k(n)= \left \lfloor{\frac{\la^{k+1}}{\la-1}}\right \rfloor .
$$ 
\end{corollary}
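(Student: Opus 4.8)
The plan is to read the identity off directly from the first formula for $s_k(n)$ in (\ref{expl}) and then bound the resulting error term. That formula gives $s_k(n) = (\la^{k+1} - \la^{-k})/(\la - 1)$, and it is valid throughout the present range since $n \neq \pm 2$, so the characteristic roots $\la, \la^{-1}$ are distinct and the exceptional cases flagged in the footnote to (\ref{expl}) do not arise. Rearranging,
\[
\frac{\la^{k+1}}{\la - 1} \;=\; s_k(n) + \frac{\la^{-k}}{\la - 1},
\]
so it suffices to check that the remainder term $\la^{-k}/(\la - 1)$ lies in the interval $[0,1)$ for every $k \geq 0$; granting this, and using that $s_k(n)$ is an integer (being a polynomial in $n$ with integer coefficients), taking floors of both sides yields $\lfloor \la^{k+1}/(\la - 1)\rfloor = s_k(n)$.

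The one substantive point is extracting the useful consequence of the hypothesis $n > 5/2$, namely the inequality $\la > 2$ already noted in the sentence preceding the corollary. By (\ref{ladef}), $\la$ is the larger root of $X^2 - nX + 1$, so $\la \ge 1$ and $n = \la + \la^{-1}$; since $t \mapsto t + t^{-1}$ is strictly increasing on $[1,\infty)$ and takes the value $5/2$ at $t = 2$, the condition $n > 5/2$ is equivalent to $\la > 2$ (and in particular $\la$ is real). Hence $\la - 1 > 1$, so $0 < 1/(\la - 1) < 1$, and since $0 < \la^{-k} \le 1$ for all $k \ge 0$ we obtain $0 < \la^{-k}/(\la - 1) < 1$, which is exactly the bound needed above.

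I do not expect any real obstacle here: the statement is an immediate corollary of the explicit formula (\ref{expl}). The only steps that are not purely mechanical are the monotonicity argument pinning down $\la > 2$ from $n > 5/2$, and the (minor) observation that the floor identity tacitly uses $s_k(n) \in \Z$, so that the claim should be read with $n$ ranging over the integers exceeding $5/2$ — every such value having $\la > 2$ and hence being covered by the argument above.
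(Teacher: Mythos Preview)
Your proof is correct and follows essentially the same route as the paper, which derives the floor formula directly from the explicit expression $s_k(n) = (\la^{k+1} - \la^{-k})/(\la - 1)$ in (\ref{expl}) together with the observation that $n > 5/2$ forces $\la > 2$ and hence $0 < \la^{-k}/(\la - 1) < 1$. Your closing remark that the floor identity tacitly requires $s_k(n) \in \Z$, and so really only applies to integer $n$, is apt: the paper's phrase ``for all real $n > 5/2$'' is a slight slip, since the formula fails for non-integer $n$.
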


The recurrence (\ref{srec}) can also be rewritten in matrix form, as 
\beq\label{matf}
{\bf v}_{j} = {\bf A}\, {\bf v}_{j-1} , 
\eeq 
where
$$
{\bf A} = 
\left(\begin{array}{cc} 0 & 1 \\ -1 & n \end{array}\right), \qquad
{\bf v}_{j} = \left(\begin{array}{c} s_{j}(n) \\ s_{j+1} (n) \end{array}\right) , 
$$ 
 hence for all $j$ the terms of the sequence are given in terms of the powers of $\bf A$ by  
$$ 
{\bf v}_{j} = {\bf A}^{j}\, {\bf v}_{0} .
$$ 
By a standard method of repeated squaring, this allows 
rapid calculation of the terms of the sequence. 
\begin{proposition}\label{mata} 
The $j$th power of the matrix $\bf A$ is given explicitly by
\beq\label{pow}
 {\bf A}^{j} = \left(\begin{array}{cc} -\Uc_{j-2}(n) & \Uc_{j-1}(n) \\ -\Uc_{j-1}(n)  & \Uc_{j}(n)  \end{array}\right),
\eeq 
and this can be calculated  in $O(\log j )$ steps. 
\end{proposition}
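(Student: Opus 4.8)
The plan is to prove the formula $(\ref{pow})$ by induction on $j$, and then observe that the repeated-squaring argument gives the $O(\log j)$ bound for free. For the base case I would check $j=0$ and $j=1$ directly: when $j=0$ the matrix on the right is $\left(\begin{smallmatrix} -\Uc_{-2}(n) & \Uc_{-1}(n) \\ -\Uc_{-1}(n) & \Uc_0(n)\end{smallmatrix}\right)$, and since the trigonometric definition $(\ref{1st})$ extends $\Uc_k$ to all $k\in\Z$ with $\Uc_{-1}(n)=0$ and $\Uc_{-2}(n)=-1$, this is the identity matrix $\bf A^0$; when $j=1$ one gets $\left(\begin{smallmatrix} -\Uc_{-1}(n) & \Uc_0(n) \\ -\Uc_0(n) & \Uc_1(n)\end{smallmatrix}\right) = \left(\begin{smallmatrix} 0 & 1 \\ -1 & n\end{smallmatrix}\right) = {\bf A}$, using $\Uc_0(n)=1$, $\Uc_1(n)=n$. (It is worth noting explicitly that $\Uc_k$ satisfies the recurrence $(\ref{urec})$ for all $k\in\Z$, so the indices $-1,-2$ cause no trouble.)

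For the inductive step, assume $(\ref{pow})$ holds for some $j\geq 1$ and compute ${\bf A}^{j+1} = {\bf A}\,{\bf A}^{j}$ (or equivalently ${\bf A}^{j}\,{\bf A}$). Multiplying out,
\[
\left(\begin{array}{cc} 0 & 1 \\ -1 & n \end{array}\right)
\left(\begin{array}{cc} -\Uc_{j-2}(n) & \Uc_{j-1}(n) \\ -\Uc_{j-1}(n)  & \Uc_{j}(n)  \end{array}\right)
=
\left(\begin{array}{cc} -\Uc_{j-1}(n) & \Uc_{j}(n) \\ \Uc_{j-2}(n) - n\,\Uc_{j-1}(n)  & -\Uc_{j-1}(n) + n\,\Uc_{j}(n)  \end{array}\right),
\]
and the bottom-left and bottom-right entries simplify to $-\Uc_{j}(n)$ and $\Uc_{j+1}(n)$ respectively by the recurrence $(\ref{urec})$ in the form $\Uc_{k+1}(n) = n\,\Uc_k(n) - \Uc_{k-1}(n)$. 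This is exactly $(\ref{pow})$ with $j$ replaced by $j+1$, completing the induction. Alternatively — and this is the cleaner route given that $(\ref{expl})$ is already available — one can avoid induction entirely: since ${\bf v}_m = {\bf A}^m {\bf v}_0$ with ${\bf v}_0 = (s_0, s_1)^{T} = (1, n+1)^{T}$, and since $(1,0)^{T}$ and $(0,1)^{T}$ are themselves realized as shifted initial-condition vectors for the Chebyshev second-kind sequence, the columns of ${\bf A}^j$ are read off from two instances of $(\ref{expl})$ (or directly from the second-kind recurrence with initial data $(\Uc_{-1},\Uc_0)$ and $(\Uc_0,\Uc_1)$); identifying each entry with the appropriate $\pm\Uc_{j+c}(n)$ gives $(\ref{pow})$.

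For the complexity claim: once $(\ref{pow})$ is established, the matrix ${\bf A}^{j}$ can be obtained by the standard square-and-multiply algorithm, writing $j$ in binary and performing a $2\times 2$ matrix multiplication at each of the $O(\log j)$ bits; since each such multiplication involves a fixed number of arithmetic operations, the total is $O(\log j)$ steps (in the bit-cost model one would add a factor for the size of the entries, but in the step-count sense claimed here nothing more is needed). I do not anticipate a genuine obstacle here — the only point requiring slight care is bookkeeping of the negative Chebyshev indices $\Uc_{-1},\Uc_{-2}$ in the base case and making sure the recurrence $(\ref{urec})$ is invoked in the correct form for the inductive step.
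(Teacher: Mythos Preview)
Your proposal is correct and follows essentially the same route as the paper: an induction on $j$ (the paper phrases it as ``the columns satisfy the same recurrence as ${\bf v}_j$, and the base case $j=0$ is trivial'', which is precisely your alternative column-based description), together with repeated squaring via the binary expansion of $j$ for the $O(\log j)$ bound. Your explicit matrix multiplication in the inductive step and your care with the negative-index values $\Uc_{-1},\Uc_{-2}$ simply fill in details that the paper leaves to the reader.
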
 
\begin{proof} 
The formula (\ref{pow}) follows by induction, noting that the columns of the matrix on the right-hand side 
satisfy the same recurrence (\ref{matf}) as the vector ${\bf v}_j$, and it is trivially true for $j=0$. 
To calculate the powers of $\bf A$ quickly, compute the binary expansion $j=\sum_{i=0}^{d-1}b_i\,2^i$, 
where $b_{d-1}=1$ and $d=\log_2 j+1$ is the number of bits, then use repeated squaring to obtain the 
sequence $ \tilde{{\bf A}}_i={\bf A}^{2^i}$ for $i=0,1,\ldots, d-1$, and finally evaluate 
${\bf A}^j =\prod_{i=0}^{d-1} \tilde{{\bf A}}_i^{b_i}$.
\end{proof} 

There are other useful 
representations for the terms $s_k(n)$, two of which we record in the following 
\begin{proposition}\label{expa} 
For $k\geq 0$, the terms of the sequence $(\,s_k(n)\,)$ admit the expansion 
\beq\label{nexp} 
s_k(n) = 
\sum_{i=0}^{\left \lfloor{\frac{k}{2}}\right \rfloor}(-1)^i  { k-i \choose i}
n^{k-2i} + \sum_{i=0}^{\left \lfloor{\frac{k-1}{2}}\right \rfloor}(-1)^i  
 { k-i -1\choose i}
n^{k-2i-1}
\eeq 
in powers of $n$, and the expansion 
\beq\label{texp} 
s_k(n) = \frac{1}{2}\Tc_0(n) + \sum_{i=1}^k \Tc_i(n)
\eeq 
in terms of dilated Chebyshev polynomials of the first kind. 
\end{proposition}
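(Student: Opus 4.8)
The plan is to obtain both expansions from identities already established in the excerpt, so that neither part requires new machinery.

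For the first expansion (\ref{nexp}), I would start from the last equality in (\ref{expl}), namely $s_k(n)=\Uc_{k-1}(n)+\Uc_k(n)$, and substitute the closed form of Proposition~\ref{uprop} into each summand. The term $\Uc_k(n)$ contributes exactly $\sum_{i=0}^{\lfloor k/2\rfloor}(-1)^i\binom{k-i}{i}n^{k-2i}$, which is the first sum in (\ref{nexp}) verbatim; the term $\Uc_{k-1}(n)$ contributes $\sum_{i=0}^{\lfloor(k-1)/2\rfloor}(-1)^i\binom{k-1-i}{i}n^{k-1-2i}$, which is precisely the second sum, since Proposition~\ref{uprop} with $k$ replaced by $k-1$ has upper limit $\lfloor(k-1)/2\rfloor$ and exponents $k-1-2i=k-2i-1$. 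So this part is essentially a one-line substitution; the only bookkeeping point is to confirm that the floor bounds line up (so the second sum is exactly $\Uc_{k-1}$ and not off by a top term), with the degenerate cases $k=0$ (empty second sum) and $k=1$ checked directly.

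For the second expansion (\ref{texp}) I see two equally short routes. The trigonometric route: set $n=2\cos\theta$, so that $\Tc_i(n)=2\cos(i\theta)$ by (\ref{1st}) and in particular $\tfrac12\Tc_0(n)=1$; then $\tfrac12\Tc_0(n)+\sum_{i=1}^k\Tc_i(n)=1+2\sum_{i=1}^k\cos(i\theta)=\sum_{j=-k}^{k}e^{\ri j\theta}=\dfrac{\sin\big((2k+1)\theta/2\big)}{\sin(\theta/2)}$, which equals $s_k(n)$ by (\ref{mainf}). Alternatively, a telescoping route avoids trigonometry: applying (\ref{expl}) twice gives $s_k(n)-s_{k-1}(n)=\Uc_k(n)-\Uc_{k-2}(n)$, and the right-hand side equals $\Tc_k(n)$ by a direct consequence of the trigonometric definitions (\ref{1st}); summing these differences from the base value $s_0(n)=1=\tfrac12\Tc_0(n)$ telescopes to (\ref{texp}).

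I do not expect a genuine obstacle: both identities reduce to Proposition~\ref{uprop} and formula (\ref{mainf}), which are already available. The main thing to watch is the index/floor bookkeeping in (\ref{nexp}) and the small-$k$ boundary cases in (\ref{texp}); if a fully self-contained write-up were wanted I would insert a one-line verification of $\Uc_k(n)-\Uc_{k-2}(n)=\Tc_k(n)$, but since (\ref{mainf}) is on hand I would most likely just present the trigonometric computation for (\ref{texp}).
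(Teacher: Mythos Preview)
Your proposal is correct and matches the paper's proof: for (\ref{nexp}) the paper likewise substitutes (\ref{uexp}) into $s_k(n)=\Uc_k(n)+\Uc_{k-1}(n)$, and for (\ref{texp}) the paper uses precisely your telescoping route, deriving $s_k(n)-s_{k-1}(n)=\Tc_k(n)$ (via $\Tc_k=2\Uc_k-n\Uc_{k-1}$ together with the recurrence, which is equivalent to your $\Uc_k-\Uc_{k-2}=\Tc_k$) and summing from $s_0=\tfrac12\Tc_0$. The only cosmetic difference is that you lean toward presenting the Dirichlet-kernel computation as primary, whereas the paper mentions the Dirichlet-kernel identity but writes out the telescoping argument in full.
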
 
\begin{proof} The first expansion (\ref{nexp}) follows from the expression on the far right-hand side of (\ref{expl}), 
together with equation (\ref{uexp}).  The second expansion (\ref{texp}) corresponds to a standard identity 
for the Dirichlet kernel; it   can be proved 
by noting that dilated first/second kind Chebyshev 
polynomials are related via  
the identity 
$ 
\Tc_k(n) = 2\Uc_k(n) - n\Uc_{k-1}(n)$, which is easily verified.  
Taken together with the recurrence (\ref{urec}), 
as well as the last expression in  
(\ref{expl}), this gives  
\beq\label{diff} 
s_k(n) - s_{k-1}(n) = \Tc_k(n). 
\eeq 
Thus the expansion (\ref{texp}) is obtained by starting from $s_0=1=\frac{1}{2}\Tc_0$ and then taking the 
 telescopic sum of the first difference formula (\ref{diff}). 
\end{proof} 
\begin{remark} A different form  of series expansion for   the airfoil polynomials of the second kind is given in \cite{nasa}. 
\end{remark} 

\begin{proposition}\label{pshift}
For any odd integer $p$, 
\beq\label{psh} 
s_{k+p}(n)  - s_k(n) = \Tc_{k+\frac{p+1}{2}}(n) \, s_{(p-1)/2}(n). 
\eeq 
\end{proposition}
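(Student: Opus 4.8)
The plan is to pass to the trigonometric parametrization $n = 2\cos\theta$ and verify the identity directly, then argue that equality of trigonometric expressions in $\theta$ forces equality of the underlying polynomials in $n$. First I would use the closed form from (\ref{mainf}), namely $s_k(n) = \sin\!\big((2k+1)\theta/2\big)/\sin(\theta/2)$, so that the left-hand side of (\ref{psh}) becomes
\[
s_{k+p}(n) - s_k(n) = \frac{\sin\!\big((2k+2p+1)\theta/2\big) - \sin\!\big((2k+1)\theta/2\big)}{\sin(\theta/2)}.
\]
Then I would apply the sum-to-product formula $\sin A - \sin B = 2\cos\!\big((A+B)/2\big)\sin\!\big((A-B)/2\big)$ with $A = (2k+2p+1)\theta/2$ and $B = (2k+1)\theta/2$, which gives $(A+B)/2 = \big(k + \tfrac{p+1}{2}\big)\theta$ and $(A-B)/2 = p\theta/2$. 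Since $p$ is odd, $(p-1)/2$ is a nonnegative integer (or an integer, handled by the symmetry (\ref{sym}) for $p<0$), so $\sin(p\theta/2)/\sin(\theta/2) = \Uc_{p-1}(\sqrt{n+2})$ by (\ref{mainf}) — equivalently this is $s_{(p-1)/2}(n)$ — and $2\cos\!\big((k+\tfrac{p+1}{2})\theta\big) = \Tc_{k+(p+1)/2}(n)$ by the definition (\ref{1st}) of the dilated first-kind polynomial. Assembling these pieces yields exactly (\ref{psh}).

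The one point needing care is the legitimacy of reading off a polynomial identity in $n$ from an identity of trigonometric functions of $\theta$: both sides of (\ref{psh}) are, for each fixed odd $p$, polynomials in $n$ (the left side by the recursive definition of $s_k$, the right side because $\Tc$ and $s$ are polynomials), and they agree for all $n$ in the interval $[-2,2]$ via the substitution $n = 2\cos\theta$; two polynomials agreeing on an infinite set are identical. Alternatively, and perhaps more cleanly, I would invoke the already-established identity (\ref{mainf}) directly in the variable $m=\sqrt{n+2}$, writing everything in terms of $\Uc_{2k}(m)$ and $\Uc_{2k+2p}(m)$, and use the standard product formula $\Uc_a(m) - \Uc_b(m) = $ (first-kind)$\times$(second-kind) — but this reduces to the same sum-to-product step, so the trigonometric route is the most transparent.

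I expect essentially no serious obstacle here: the result is a one-line consequence of a sum-to-product trigonometric identity once the closed forms (\ref{mainf}) and (\ref{1st}) are in hand. The mildest subtlety — hardly an obstacle — is bookkeeping the case $p<0$ (where $(p-1)/2 < 0$), which is absorbed by the antisymmetry $s_{-\ell-1}(n) = -s_\ell(n)$ from (\ref{sym}); indeed for $p$ odd one checks $s_{(p-1)/2}(n) = -s_{(-p-1)/2}(n)$, consistent with replacing $p$ by $-p$ and $k$ by $k+p$ on the left-hand side. A remark to the reader that specializing $k \to 0$ in (\ref{psh}) recovers the evaluation $s_p(n) - 1 = \Tc_{(p+1)/2}(n)\,s_{(p-1)/2}(n)$, which for $p$ an odd prime is the arithmetic heart of the composite-ness arguments, would be a natural closing sentence.
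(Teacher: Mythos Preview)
Your proposal is correct and follows essentially the same route as the paper: the paper's proof is a single sentence invoking the trigonometric expression in (\ref{mainf}) together with the addition formula (\ref{sine}), which is exactly your sum-to-product computation. Your extra remarks on passing from a trigonometric identity to a polynomial identity and on the case $p<0$ are sound but more detail than the paper supplies.
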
  
\begin{proof} This follows from the trigonometric  expression on the far right-hand side of (\ref{mainf}), by applying the addition formula 
(\ref{sine}). 
\end{proof} 
\begin{remark} The formula (\ref{diff}) is the particular case $p=1$ of the above identity. 
\end{remark}

\begin{proposition}\label{linrels} 
Any  decimation 
$(\, s_{i+dk}(n)\,)$  of the  sequence  of order $d$, satisfies the linear recurrence 
\beq\label{lind} 
s_{i+d(k+1)}(n) - \Tc_d(n)\, s_{i+dk}(n) + s_{i+d(k-1)}(n) = 0. 
\eeq    
\end{proposition}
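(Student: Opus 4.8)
The plan is to apply the general decimation formula (\ref{diss}) to the recurrence (\ref{srec}) and then recognise the coefficient that appears as a dilated Chebyshev polynomial of the first kind. First I would note that the characteristic polynomial of (\ref{srec}) is $X^2 - n X + 1$, whose roots are $\la$ and $\la^{-1}$, with $\la$ given by (\ref{ladef}). Applying (\ref{diss}) with $N = 2$ and characteristic roots $\la_1 = \la$, $\la_2 = \la^{-1}$, the decimation $(\,s_{i+dk}(n)\,)$ of order $d$ satisfies
\beq
(\rS - \la^{d})(\rS - \la^{-d})\, s_{i+dk}(n) = 0 ;
\eeq
expanding the product and using $\la^{d}\cdot\la^{-d} = 1$ turns this into
\beq
s_{i+d(k+1)}(n) - \big(\la^{d} + \la^{-d}\big)\, s_{i+dk}(n) + s_{i+d(k-1)}(n) = 0 .
\eeq

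It then remains only to identify $\la^{d} + \la^{-d}$ with $\Tc_d(n)$. This is immediate from the trigonometric definition (\ref{1st}): with $n = 2\cos\theta = \la + \la^{-1}$ and $\la = e^{\ri\theta}$, one has $\Tc_d(n) = \Tc_d(2\cos\theta) = 2\cos(d\theta) = e^{\ri d\theta} + e^{-\ri d\theta} = \la^{d} + \la^{-d}$; equivalently, by Proposition \ref{lehch}, $\Tc_d(n) = \ell_d^{+}(n,1) = \al^{d} + \be^{d}$, where $\al,\be$ are the roots of $X^2 - nX + 1$. Substituting yields (\ref{lind}). The one place that needs a word of care is the degenerate case $n = \pm 2$, where $\la = \la^{-1} = \pm 1$ and the two roots coincide: here one either appeals to the observation made just after (\ref{diss}) that the decimation recurrence keeps the same form for repeated roots, or simply notes that, for fixed $i$, $k$, $d$, both sides of (\ref{lind}) are polynomials in $n$, so that an identity verified for all $n$ with $n^2 > 4$ holds identically. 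I do not expect a genuine obstacle here — the argument is essentially bookkeeping, the only things to watch being the repeated-root case and the correct matching of the characteristic roots of (\ref{srec}) with those of $X^2 - nX + 1$.

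An alternative, entirely self-contained route bypasses the decimation machinery. Starting from the closed form $s_m(n) = \sin\big((2m+1)\theta/2\big)/\sin(\theta/2)$ in (\ref{mainf}), the combination $s_{i+d(k+1)}(n) + s_{i+d(k-1)}(n)$ is a sum of two sines whose arguments are $(2(i+dk)+1)\theta/2 \pm d\theta$; applying the sum-to-product identity (\ref{sine}) with these two angles collapses it to $2\cos(d\theta)\, s_{i+dk}(n) = \Tc_d(n)\, s_{i+dk}(n)$, which is exactly (\ref{lind}).
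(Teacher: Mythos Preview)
Your proof is correct and follows essentially the same approach as the paper: apply the decimation formula (\ref{diss}) with the characteristic roots $\la,\la^{-1}$ of (\ref{srec}), then identify $\la^d+\la^{-d}=2\cos(d\theta)=\Tc_d(n)$ via (\ref{ladef}) and (\ref{1st}). Your treatment is in fact slightly more careful than the paper's, since you explicitly address the repeated-root case $n=\pm2$ and offer the alternative sum-to-product argument from (\ref{mainf}).
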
 
\begin{proof} By the first formula for $s_k(n)$ in  (\ref{expl}), the terms of the decimated sequence can be written 
as linear combinations of $k$th powers of $\la^d$ and $\la^{-d}$, so from the formula (\ref{diss}) we find 
$$ 
\Big(\rS^2 - (\la^d+\la^{-d})\, \rS +1\Big) \,s_{i+dk}(n) =0, 
$$ 
and by using (\ref{ladef}) we see that $\la^d+\la^{-d} = 2\cos(d\theta) = \Tc_d(n)$, which verifies (\ref{lind}).  
\end{proof} 

It is worth highlighting some particular cases of the preceding two results, namely 
the formulae 
\beq\label{qpm1a} 
s_{2j}(n) +1= s_j(n)\, \Tc_j(n), 
\qquad s_{2j+1}(n) -1= s_j(n)\, \Tc_{j+1}(n),
\eeq
of which the first arises by setting $i=0$, $k= 1$, $d= j$ in (\ref{lind}), 
while the second comes from taking $k= 0$, $p=2j+1$ in (\ref{psh}).  For primality testing of a number $q$, it is often useful to have a factorization, or a 
partial factorization, of either $q-1$ or $q+1$ \cite{bls, pom}, and each of the 
identities in (\ref{qpm1a}) 
also has an analogue where the sign of the  $\pm 1$ term on the 
left-hand side is reversed. 

\begin{proposition} \label{qpm} For any integer $j$, 
\beq\label{qpm1b} 
s_{2j}(n) -1=(n+2)\,  r_j(n)\, \Uc_{j-1}(n), 
\qquad s_{2j+1}(n) + 1= (n+2)\, r_j(n)\, \Uc_{j}(n). 
\eeq
 \end{proposition}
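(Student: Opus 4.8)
The plan is to prove both identities in (\ref{qpm1b}) by the same trigonometric method used for Propositions \ref{pshift} and \ref{linrels}, namely by writing everything in terms of the half-angle variable $\theta/2$. Recall from (\ref{mainf}) that $s_k(n) = \sin\big((2k+1)\theta/2\big)/\sin(\theta/2)$ with $n = 2\cos\theta$. I would first record the companion formulae for the other sequences appearing on the right-hand side: from (\ref{rsrel}) together with (\ref{mainf}), or directly from (\ref{1stairfoil}), one has $r_j(n) = \cos\big((2j+1)\theta/2\big)/\cos(\theta/2)$; from (\ref{1st}) one has $\Uc_{j-1}(n) = \sin(j\theta)/\sin\theta$ and $\Uc_j(n)=\sin\big((j+1)\theta\big)/\sin\theta$; and $n+2 = 2\cos\theta + 2 = 4\cos^2(\theta/2)$. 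The last of these is the key ``trivial factor'' that will make the denominators match up.

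With these substitutions in hand, the first identity $s_{2j}(n) - 1 = (n+2)\,r_j(n)\,\Uc_{j-1}(n)$ becomes, after clearing denominators, the trigonometric claim
\beq\label{qpmproofa}
\frac{\sin\big((4j+1)\theta/2\big)}{\sin(\theta/2)} - 1 = 4\cos^2(\theta/2)\cdot\frac{\cos\big((2j+1)\theta/2\big)}{\cos(\theta/2)}\cdot\frac{\sin(j\theta)}{2\sin(\theta/2)\cos(\theta/2)},
\eeq
using $\sin\theta = 2\sin(\theta/2)\cos(\theta/2)$. The right-hand side simplifies to $2\cos\big((2j+1)\theta/2\big)\sin(j\theta)/\sin(\theta/2)$, so after multiplying through by $\sin(\theta/2)$ the desired equality reduces to
\beq\label{qpmproofb}
\sin\big((4j+1)\theta/2\big) - \sin(\theta/2) = 2\cos\big((2j+1)\theta/2\big)\sin(j\theta),
\eeq
which is precisely the product-to-sum identity $2\cos A\sin B = \sin(A+B) - \sin(A-B)$ with $A = (2j+1)\theta/2$, $B = j\theta$, since $A+B = (4j+1)\theta/2$ and $A - B = \theta/2$. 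The second identity is handled identically: $s_{2j+1}(n)+1 = (n+2)\,r_j(n)\,\Uc_j(n)$ reduces in the same way to $\sin\big((4j+3)\theta/2\big) + \sin(\theta/2) = 2\cos\big((2j+1)\theta/2\big)\sin\big((j+1)\theta\big)$, which is the same product-to-sum formula with $A = (2j+1)\theta/2$, $B = (j+1)\theta$, now noting $A - B = -\theta/2$ so that $-\sin(A-B) = +\sin(\theta/2)$.

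There is essentially no obstacle here; the only points requiring a word of care are the usual ones of validity of the trigonometric parametrization. As in the footnote to (\ref{mainf}), the manipulations are a priori valid for $\theta$ avoiding the degenerate values where $\sin(\theta/2)$ or $\cos(\theta/2)$ vanishes (equivalently $n = \pm 2$), but since both sides of each identity in (\ref{qpm1b}) are polynomials in $n$ and agree on an infinite set, they agree as polynomial identities, hence for all $n$. Alternatively, one can give a purely algebraic version of the argument by writing $s_k(n) = (\la^{k+1}-\la^{-k})/(\la-1)$ and $r_k(n) = (\la^{k+1}+\la^{-k})/(\la+1)$ as in (\ref{expl}), with $n+2 = (\la^{1/2}+\la^{-1/2})^2$ and $\Uc_{j-1}(n) = (\la^j - \la^{-j})/(\la - \la^{-1})$, and then expand both sides as Laurent polynomials in $\la^{1/2}$; this avoids any appeal to analytic continuation but amounts to the same cancellation. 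I would present the trigonometric version for brevity, with a remark that (\ref{qpm1b}) complements (\ref{qpm1a}) in giving the ``reversed-sign'' factorizations of $s_{2j}(n)\mp 1$ and $s_{2j+1}(n)\pm 1$ alluded to in the text preceding the proposition.
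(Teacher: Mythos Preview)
Your proof is correct and is essentially the same as the paper's: the paper carries out the identical computation in the exponential form $\al=e^{\ri\theta/2}$ (so that $n+2=(\al+\al^{-1})^2$, $r_j(n)=(\al^{2j+1}+\al^{-(2j+1)})/(\al+\al^{-1})$, etc.), which is just your trigonometric argument rewritten via $\cos\psi=(e^{\ri\psi}+e^{-\ri\psi})/2$ and $\sin\psi=(e^{\ri\psi}-e^{-\ri\psi})/(2\ri)$. Your explicit identification of the product-to-sum step and your remark on extending to $n=\pm 2$ by polynomial identity are nice touches that the paper leaves implicit.
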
 
\begin{proof}
For the first identity in (\ref{qpm1b}), 
using $\la =\al^2 $ with $\al=e^{\ri\theta/2}$ and $n =\la+\la^{-1}$ yields 
$n+2=(\al+\al^{-1})^2$, and then from (\ref{ralform}) and the definition 
of the dilated Chebyshev polynomials of the second kind it follows 
that 
$(n+2) r_j(n)\, \Uc_{j-1}(n)$ is equal to 
$$ 
(\al+\al^{-1})^2\, \left(\frac{\al^{2j+1}+\al^{-(2j+1)}}{\al+\al^{-1}}\right)\, 
\left(\frac{\al^{2j}-\al^{-2j}}{\al^2-\al^{-2}}\right) 
=\left(\frac{\al^{4j+1}-\al^{-(4j+1)}}{\al-\al^{-1}}\right) - 1
$$ 
which is precisely $s_{2j}(n) -1$, by (\ref{alform}). The proof of the 
second identity is similar. 
\end{proof}

Another basic fact we shall use is that, with a suitable restriction on $n$,   $s_k(n)$ is 
monotone increasing with $k$. 

\begin{proposition} \label{inc} 
For each real $n\geq 2$, the sequence $(\,s_k(n)\,)$ is strictly increasing, and grows exponentially with 
leading order asymptotics 
$$ 
s_k(n)\sim   \frac{1}{2}\left( 1+ \sqrt{\frac{n+2}{n-2}}\right) \,\left(\frac{n+\sqrt{n^2 - 4}}{2} \right)^k 
\qquad 
as \quad k\to\infty , 
$$ 
for all $n>2$.  
\end{proposition}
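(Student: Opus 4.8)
The plan is to use the explicit formula~(\ref{expl}), namely $s_k(n) = (\la^{k+1}-\la^{-k})/(\la-1)$, where $\la = \tfrac12(n+\sqrt{n^2-4}) > 1$ for real $n>2$. First I would establish strict monotonicity for $k\geq 0$. The cleanest route is the first-difference identity~(\ref{diff}), $s_k(n) - s_{k-1}(n) = \Tc_k(n)$, combined with the fact that for real $n\geq 2$ one has $\Tc_k(n) = \la^k + \la^{-k} \geq 2 > 0$ for every $k\geq 1$ (since $\la\geq 1$ is real and positive). Hence $s_k(n) > s_{k-1}(n)$ for all $k\geq 1$, giving strict increase. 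For $n=2$ exactly this is consistent with $s_k(2)=2k+1$ from~(\ref{lin}), which is indeed strictly increasing, so the endpoint case causes no trouble.

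For the asymptotics, restrict to $n>2$, so $\la>1$ is a genuine (real) dominant root and $\la^{-1}<1$. From~(\ref{expl}),
\beq
s_k(n) = \frac{\la}{\la-1}\,\la^k - \frac{1}{\la-1}\,\la^{-k},
\eeq
and the second term tends to $0$ as $k\to\infty$, so $s_k(n) \sim \tfrac{\la}{\la-1}\,\la^k$. It then remains to check that the stated leading coefficient $\tfrac12\bigl(1 + \sqrt{(n+2)/(n-2)}\bigr)$ equals $\la/(\la-1)$. This is a routine algebraic simplification: writing $\la = \tfrac12(n+\sqrt{n^2-4})$, one computes $\la - 1 = \tfrac12(n-2+\sqrt{n^2-4})$, rationalizes $\la/(\la-1)$ by multiplying numerator and denominator by the conjugate, and uses $\sqrt{n^2-4} = \sqrt{n-2}\,\sqrt{n+2}$ to bring it to the claimed form. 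Equivalently, one can set $n = 2\cosh\eta$ so that $\la = e^{\eta}$ and $\la/(\la-1) = 1/(1-e^{-\eta})$, then verify $\sqrt{(n+2)/(n-2)} = \coth(\eta/2)$ and that $\tfrac12(1+\coth(\eta/2)) = 1/(1-e^{-\eta})$; either way it is a short identity check, not a real difficulty.

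The main (and only mild) obstacle is bookkeeping at the boundary: the dominant-root splitting in~(\ref{expl}) degenerates at $n=2$ because $\la=\la^{-1}=1$, which is exactly why the asymptotic claim is stated only for $n>2$ while monotonicity holds for all $n\geq 2$. So I would prove monotonicity via the first-difference formula~(\ref{diff}) (valid including $n=2$), and prove the growth estimate separately for $n>2$ via the explicit two-term expression above, being careful to note the $n=2$ exclusion. No step here is genuinely hard; the proposition is essentially a direct consequence of results already assembled in this section, principally~(\ref{expl}) and~(\ref{diff}).
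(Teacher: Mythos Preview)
Your proposal is correct and follows essentially the same route as the paper: both arguments establish monotonicity via the first-difference identity~(\ref{diff}) together with the bound $\Tc_k(n)\geq 2$ for $n\geq 2$, and obtain the asymptotics by isolating the dominant $\la^k$ term in the explicit formula~(\ref{expl}). The only cosmetic difference is that the paper justifies $\Tc_k(n)\geq 2$ via the hyperbolic substitution $n=2\cosh\tau$ and a monotonicity-in-$n$ argument, whereas you invoke $\la^k+\la^{-k}\geq 2$ directly; your version is if anything slightly quicker.
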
  
\begin{proof} For real $n\geq 2$, from (\ref{ladef}) we can set 
$$\tau=\ri\theta = \log\left(\frac{n+\sqrt{n^2 - 4}}{2} \right), $$
which defines a bijection   
from the interval $n\in [2,\infty )$ to  $\tau\in [0,\infty)$. 
The  inverse is 
$$ 
n = 2\,\mathrm{cosh} \tau \implies \frac{\rd n}{\rd\tau}=2\,\mathrm{sinh} \tau >0, 
$$ 
and we have 
$$ 
\Tc_k(n) = 2\, \mathrm{cosh} (k\tau ) \implies \frac{\rd }{\rd\tau}\,\Tc_k(n) =2k\,\mathrm{sinh} (k\tau) >0 
$$ 
for $\tau>0$; hence, for all fixed $k$, $\Tc_k(n)$ is a strictly increasing function of $n$ for $n\geq 2$. Similarly, 
$\frac{\rd }{\rd k}\,\Tc_k(n) =2\tau\,\mathrm{sinh} (k\tau)$ so for all fixed $n>2$, the sequence $(\,\Tc_k(n)\,)_{k\geq 0}$ is also strictly increasing  
with $k$.
Then since $\Tc_k(2) =2$ for all $k$, it follows that, for all $k$,  
\beq\label{tb} 
\Tc_k(n) \geq 2\qquad \forall n \geq 2, 
\eeq 
so  from (\ref{diff}) 
we have $$s_k(n) - s_{k-1}(n)\geq 2.$$ 
Upon taking the leading term of the   explicit expression in terms of $\la$ in (\ref{expl}) and rewriting it as a 
function of $n$, the 
asymptotic formula results.  
\end{proof} 

We can now use the explicit formulae above to derive various arithmetical properties of the integer sequences defined by 
$s_k(n)$ for positive integers $n$. 
This will culminate in Lemma \ref{gcd} 
below, which describes coprimality conditions on the terms, as well as Lemma 
\ref{pdiv} and its corollaries, which constrain where particular prime factors can appear. 
To begin with we describe where primes can appear in the sequence. 
\begin{lemma}\label{2kp1} 
For all integers $n\geq 2$, if $s_k(n)$ is prime then $k=(p-1)/2$ for $p$ an odd prime. 
\end{lemma}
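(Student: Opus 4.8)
The plan is to exploit the factorization identities established in the previous section to show that if the index $k$ is such that $2k+1$ is composite, then $s_k(n)$ factors nontrivially. Recall from Proposition \ref{lehch} that $s_k(n) = L^-_{2k+1}(\sqrt{n+2},1)$, and from the trigonometric formula (\ref{mainf}) that $s_k(n) = \Uc_{2k}(\sqrt{n+2})$. Write $m = \sqrt{n+2}$ for brevity. The key tool is the Chebyshev product identity (\ref{uid}): $\Uc_{ab-1}(m) = \Uc_{a-1}(\Tc_b(m))\,\Uc_{b-1}(m)$. The idea is that $2k = (2k+1) - 1$, so if $2k+1 = cd$ with $c,d > 1$ both \emph{odd} (which is forced, since $2k+1$ is odd), then setting $ab - 1 = 2k$ with $ab = cd$ gives a factorization of $\Uc_{2k}(m)$.

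First I would argue that it suffices to consider the case where $k \geq 1$ and $2k+1$ is not prime. If $s_k(n)$ is prime, then certainly $k \neq 0$ (since $s_0(n) = 1$), so $2k+1 \geq 3$; and $2k+1$, being an odd integer $\geq 3$, is either an odd prime $p$ — giving $k = (p-1)/2$ as claimed — or it is composite. So the whole statement reduces to showing: \emph{if $2k+1$ is an odd composite number, then $s_k(n)$ is composite for all integers $n \geq 2$.} Next, write $2k+1 = cd$ with $c, d$ odd and $1 < c \leq d < 2k+1$. Apply (\ref{uid}) with $a = c$, $b = d$ (so $ab - 1 = cd - 1 = 2k$), obtaining
\beq\label{prop-factor}
s_k(n) = \Uc_{2k}(m) = \Uc_{c-1}\big(\Tc_d(m)\big)\, \Uc_{d-1}(m).
\eeq
Now I must check both factors are integers exceeding $1$. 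Since $d$ is odd, $\Tc_d(m)$ is an odd polynomial in $m$, hence $\Tc_d(m) = \Tc_d(\sqrt{n+2})$ is $\sqrt{n+2}$ times a polynomial in $n+2$ with integer coefficients; one checks (e.g.\ via $\Tc_d(m) = \la^{d/2}+\la^{-d/2}$ with $m^2 = \la + \la^{-1} + 2$, or directly from Proposition \ref{uprop}) that $\Uc_{c-1}(\Tc_d(m))$, being an even polynomial in $\Tc_d(m)$ when $c$ is odd, is in fact a polynomial in $n$ with integer coefficients; similarly $\Uc_{d-1}(m)$ is an even polynomial in $m$, hence a polynomial in $n$ with integer coefficients. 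Indeed, both factors in (\ref{prop-factor}) are recognizable as members of the families $r_k, s_k$ or $\Uc$-type sequences evaluated at integer arguments, as in Remark \ref{altf} and the discussion around Theorem \ref{factors}. Finally, for $n \geq 2$ one has $m = \sqrt{n+2} \geq 2$ and $\Tc_d(m) \geq 2$ by (\ref{tb}), so by Proposition \ref{inc} (applied to the relevant increasing sequences) each of $\Uc_{d-1}(m)$ and $\Uc_{c-1}(\Tc_d(m))$ is $\geq \Uc_1 = $ its argument $\geq 2 > 1$ when the subscript is $\geq 1$, i.e.\ when $c, d \geq 2$, which holds. Hence $s_k(n)$ is a product of two integers each $> 1$, so it is composite, completing the proof.

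The main obstacle I anticipate is the bookkeeping showing that $\Uc_{c-1}(\Tc_d(m))$ is genuinely an \emph{integer} (not merely an algebraic number in $\Q(\sqrt{n+2})$) and is strictly greater than $1$. The integrality follows from parity: $\Tc_d(m)$ for odd $d$ equals $m \cdot q(m^2)$ for an integer polynomial $q$, and $\Uc_{c-1}$ for odd $c$ is an even polynomial, so $\Uc_{c-1}(\Tc_d(m)) = \Uc_{c-1}(m\, q(m^2))$ only involves even powers of $m$, hence is a polynomial in $m^2 = n+2$ with integer coefficients. The strict inequality $> 1$ for $n \geq 2$ is handled by monotonicity: since the argument $\Tc_d(m)$ is $\geq 2$, the dilated second-kind Chebyshev value $\Uc_{c-1}$ at that argument is at least $\Uc_{c-1}(2) = c$ (using $\Uc_j(2) = j+1$), and $c \geq 3 > 1$; similarly $\Uc_{d-1}(m) \geq \Uc_{d-1}(2) = d \geq 3$. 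Thus both factors are at least $3$, and the conclusion follows.
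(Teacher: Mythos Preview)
Your proof is correct and follows essentially the same route as the paper: both apply the Chebyshev product identity (\ref{uid}) to the representation $s_k(n)=\Uc_{2k}(\sqrt{n+2})$ with $2k+1=cd$, obtaining $s_k(n)=\Uc_{c-1}(\Tc_d(\sqrt{n+2}))\,\Uc_{d-1}(\sqrt{n+2})$. The only difference is cosmetic: rather than arguing integrality via parity of the polynomials, the paper recognises each factor as an $s$-value at an integer argument, namely $s_k(n)=s_{(c-1)/2}(\Tc_d(n))\,s_{(d-1)/2}(n)$ (using (\ref{mainf}) again together with (\ref{tid})), which makes both integrality and the inequality $>1$ immediate.
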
 
\begin{proof}  
If $2k+1=ab$ is composite, for some odd integers $a,b\geq 3$, then the identity (\ref{uid}) can be applied to the middle expression in 
(\ref{mainf}), to write
$s_k(n)$ as the product 
$$ 
\begin{array}{rcl} 
s_k(n) & = & 
 \Uc_{a-1}\left(\Tc_b(\sqrt{n+2})\right) \,\Uc_{b-1}(\sqrt{n+2}) \\ 
& = &
 s_{(a-1)/2}\left(\Tc_b (\sqrt{n+2})^2 -2\right) \, s_{(b-1)/2}(n). 
\end{array} 
$$ 
Then, since $\Tc_2(j)=j^2-2$, by using (\ref{tid}) we have 
\beq\label{facid} \begin{array}{rcl} 
 s_k(n)
& = & s_{(a-1)/2}\left(\Tc_{2b} (\sqrt{n+2})\right) \, s_{(b-1)/2}(n) \\ 
& = &  s_{(a-1)/2}\left(\Tc_{b} (n)\right) \, s_{(b-1)/2}(n) , 
\end{array} 
\eeq 
and each factor above is an  integer greater than 1. 
\end{proof} 

Henceforth we will consider only integer values of $n$. It is well known that all linear recurrence sequences defined over $\Z$ are 
eventually periodic $\bmod \,m$ for any modulus $m$ \cite{ward}; and for the recurrence (\ref{srec}) we can say further that it is strictly periodic  $\bmod \,m$, because the linear map $(s_k,s_{k+1})\mapsto (s_{k+1},s_{k+2})$ defined by the matrix ${\bf A}$ in (\ref{matf}) is always invertible  $\bmod \,m$ (since $\det{\bf A} =1$). 
However, in order to obtain  coprimality conditions, we need a  lemma that explicitly describes the periodicity of the terms $s_k(n)\,\bmod s_{j}(n)$ for fixed $j$.

\begin{lemma}\label{resper}
For all integers $n\geq 2$ and  any odd number $p\geq 3$, 
the sequence of residues $s_k(n)\,\bmod s_{(p-1)/2}(n)$  is periodic with period $p$, and 
$s_k(n) \equiv 0 \pmod{s_{(p-1)/2}(n)}$ if and only if $k \equiv (p-1)/2 \pmod{p}$. 
\end{lemma}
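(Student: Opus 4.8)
The plan is to work with the explicit product formula $(\ref{psh})$ from Proposition \ref{pshift}. Write $j=(p-1)/2$, so that $p=2j+1$ and $(p-1)/2=j$, and set $m=s_j(n)$. Taking the shift identity with this odd value $p$ gives
\beq\label{keyshift}
s_{k+p}(n) - s_k(n) = \Tc_{k+j+1}(n)\, s_j(n),
\eeq
so $s_{k+p}(n)\equiv s_k(n)\pmod{m}$ for every $k$. This establishes immediately that the residue sequence $s_k(n)\bmod m$ is periodic with period dividing $p$; I would then note that one period is covered by the $p$ consecutive terms $s_0(n),s_1(n),\dots,s_{p-1}(n)$, and in particular $s_p(n)\equiv s_0(n)=1$, $s_{p-1}(n)\equiv s_{-1}(n)=-1$ by the symmetry $(\ref{sym})$, which already shows the period is not $1$.

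The second assertion requires identifying exactly which residues in one period vanish. By $(\ref{keyshift})$ it suffices to check $0\le k\le p-1$, and since $s_j(n)\equiv 0\pmod m$ trivially, I need to show $s_k(n)\not\equiv 0\pmod m$ for $0\le k\le p-1$ with $k\ne j$. Here I would use the factorization identity $(\ref{facid})$ in the form $s_{ab-1 \text{ over }2}(n) = s_{(a-1)/2}(\Tc_b(n))\,s_{(b-1)/2}(n)$, but more directly I would argue via divisibility of Chebyshev/Lehmer numbers: from $(\ref{mainf})$, $s_k(n)=\Uc_{2k}(\sqrt{n+2})$ and $s_j(n)=\Uc_{2j}(\sqrt{n+2})=\Uc_{p-1}(\sqrt{n+2})$, and it is standard (following from $(\ref{uid})$, or from the Lucas-sequence divisibility $\ell^-_a \mid \ell^-_b \iff a\mid b$ applied to $\Uc_{k-1}(n)=\ell^-_k(n,1)$) that $\Uc_{p-1}(x) \mid \Uc_{N}(x)$ precisely when $p \mid N+1$. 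So $s_j(n)\mid s_k(n)$ as integers exactly when $p\mid 2k+1$, i.e. $k\equiv j\pmod p$. For $0\le k\le p-1$ this forces $k=j$, and this handles the "only if" direction; the "if" direction ($k\equiv j\pmod p\implies s_k(n)\equiv 0$) follows from $(\ref{keyshift})$ together with $s_j(n)\equiv 0$. The remaining gap is that integer divisibility $s_j(n)\mid s_k(n)$ is stronger than the mod-$m$ statement I actually need, so I must also rule out $s_k(n)\equiv 0\pmod{m}$ when $s_j(n)\nmid s_k(n)$; for this I would use the sharper fact that for the relevant indices $\gcd(\Uc_{p-1}(x),\Uc_{N}(x))=\Uc_{\gcd(p,N+1)-1}(x)$, which is $\Uc_0=1$ when $p\nmid N+1$ (as $p$ is... well, when $\gcd(p,N+1)=1$), and more carefully invoke the coprimality results the paper builds toward — but since Lemma \ref{gcd} is stated as coming \emph{after} this lemma, I would instead give the self-contained argument: reduce $(\ref{srec})$ mod $m$ and observe that the pair $(s_j,s_{j+1})\equiv(0,s_{j+1})\pmod m$, so running the recurrence shows $s_{j+i}\equiv s_{j+1}\Uc_{i-1}(n)$ and $s_{j-i}\equiv -s_{j+1}\Uc_{i-1}(n)$ mod $m$ for $0\le i\le j$, whence $s_k(n)\equiv 0\pmod m$ for some $k$ in range forces $s_{j+1}\Uc_{i-1}(n)\equiv 0$; since $\gcd(s_{j+1},m)\mid\gcd(s_{j+1},s_j)$ which one checks divides a constant (consecutive terms are coprime up to a small factor), one concludes $\Uc_{i-1}(n)\equiv 0\pmod m$, impossible for $1\le i\le j$ because $0<\Uc_{i-1}(n)<s_j(n)=m$ for $n\ge 2$ by Proposition \ref{inc}-type monotonicity.

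The main obstacle I anticipate is exactly this last point: cleanly excluding zeros among the non-obvious residues without circularly invoking the coprimality lemmas that follow. The cleanest route is the size estimate $0<\Uc_{i-1}(n)<m$ for $1\le i\le j$ together with explicit coprimality of $s_j(n)$ with $s_{j+1}(n)$, both of which are elementary consequences of the monotonicity and the matrix identity $(\ref{pow})$ (which gives $s_{j+1}\Uc_{j-2}-s_j\Uc_{j-1}\cdot(\text{something})=\pm1$-type relations). Everything else — the periodicity and the "if" direction — is a direct application of $(\ref{psh})$ and $(\ref{sym})$ and should be short.
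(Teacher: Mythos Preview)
Your periodicity argument via the shift identity $(\ref{psh})$ matches the paper exactly. Where you diverge is in excluding zeros among the residues $s_k(n)\bmod m$ for $k\ne j$ in one period, and here you work much harder than necessary.

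The paper's argument is direct: by Proposition \ref{inc}, for $n\ge 2$ the sequence $(\,s_k(n)\,)_{k\ge 0}$ is strictly increasing, so $0<s_k(n)<s_j(n)=m$ for $0\le k\le j-1$, whence these residues are nonzero. The symmetry $(\ref{sym})$ then gives $s_k(n)\in\{-m+1,\dots,-1\}$ for $-j\le k\le -1$, so those residues are also nonzero. Since $\{-j,\dots,j\}$ covers a full period of length $p=2j+1$ and $s_j(n)\equiv 0$, the zero locus is exactly $k\equiv j\pmod p$. No coprimality, no $\Uc$, no auxiliary recurrences.

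Your route through $s_{j\pm i}\equiv \pm s_{j+1}\,\Uc_{i-1}(n)\pmod m$ does work, but the step you flagged as uncertain --- coprimality of $s_j$ and $s_{j+1}$ --- is in fact immediate: any common divisor of two consecutive terms divides all earlier terms by the recurrence $(\ref{srec})$, hence divides $s_0=1$. With that in hand, $s_{j+1}$ is a unit mod $m$, and your size bound $0<\Uc_{i-1}(n)<s_j(n)$ (which follows from $s_j=\Uc_j+\Uc_{j-1}$ together with monotonicity of $\Uc$) finishes the argument. So your plan is correct but circuitous; the paper's key observation is that the same monotonicity applied directly to $s_k$ rather than to $\Uc_k$ eliminates the detour through coprimality of consecutive terms.
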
 
\begin{proof} The identity (\ref{psh}) implies that, for all $k$, 
$$s_{k+p}(n)\equiv s_k(n) \pmod{s_{(p-1)/2}(n)},$$ 
so the residues repeat with period $p$. 
By the monotonicity result in Proposition \ref{inc}, 
$$ 
1=s_0(n) <s_1(n)< \cdots < s_{(p-3)/2}(n) < s_{(p-1)/2}(n). 
$$ 
Then the symmetry (\ref{sym}) implies that the residues $\bmod \,s_{(p-1)/2}(n)$ are non-zero in the range
$-(p-1)/2 \leq k\leq (p-3)/2$, so the rest of the statement follows from the periodicity. 
\end{proof}

\begin{lemma}\label{star} 
For each integer $n\geq 2$ and any odd integer $p\geq 3$,  $s_k(n)$ is coprime to $s_{(p-1)/2}(n)$ if and only if 
$(p-1)/2-k$ is coprime to $p$. 
\end{lemma}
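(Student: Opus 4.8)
The plan is to recast the coprimality condition in terms of the indices and then establish the two implications separately, the forward one directly from Lemma~\ref{resper} and the reverse one by a rank-of-apparition argument modulo a prime. Throughout write $q=(p-1)/2$. Since $p$ is odd, $2$ is invertible modulo $p$ and $2(q-k)=(p-1)-2k\equiv-(2k+1)\pmod p$, so $\gcd(q-k,p)=\gcd(2k+1,p)$; hence it suffices to prove that $s_k(n)$ is coprime to $s_q(n)$ if and only if $\gcd(2k+1,p)=1$.

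For the ``only if'' direction I would prove the contrapositive. Suppose $g:=\gcd(2k+1,p)>1$. As a divisor of the odd number $p$, $g$ is odd, and $g\ge3$, so Lemma~\ref{resper} may be applied with $g$ in place of $p$. From $g\mid 2k+1$ and $g\mid 2q+1=p$, dividing through by the invertible $2$, one obtains $k\equiv(g-1)/2\equiv q\pmod g$, so Lemma~\ref{resper} gives $s_{(g-1)/2}(n)\mid s_k(n)$ and $s_{(g-1)/2}(n)\mid s_q(n)$. Since $(g-1)/2\ge1$, Proposition~\ref{inc} yields $s_{(g-1)/2}(n)\ge s_1(n)=n+1>1$, and therefore $\gcd(s_k(n),s_q(n))>1$.

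For the ``if'' direction, suppose a prime $\ell$ divides both $s_k(n)$ and $s_q(n)$; the aim is to produce an odd integer $r\ge3$ dividing $\gcd(2k+1,p)$. The key is to pin down exactly which terms of $(\,s_m(n)\,)$ are divisible by $\ell$. If $\ell\mid n+2$ then, since $s_m$ is a polynomial in $n$, $s_m(n)\equiv s_m(-2)=(-1)^m\pmod\ell$ by~(\ref{nm2}), which is never $0$; this contradicts $\ell\mid s_k(n)$, so this case does not arise. If $\ell\mid n-2$ then similarly $s_m(n)\equiv s_m(2)=2m+1\pmod\ell$ by~(\ref{lin}), so $\ell\mid s_m(n)\iff\ell\mid 2m+1$, and we put $r:=\ell$. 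In the remaining case $\ell\nmid(n-2)(n+2)$ the characteristic polynomial $X^2-nX+1$ of~(\ref{srec}) has, modulo $\ell$, two distinct roots $\lambda,\lambda^{-1}$ with $\lambda\ne1$; the first formula in~(\ref{expl}) then remains valid modulo $\ell$ (both sides solve the recurrence with the same two initial values), so $\ell\mid s_m(n)\iff\lambda^{2m+1}=1\iff f\mid 2m+1$, where $f$ is the multiplicative order of $\lambda$, and we put $r:=f$. In both surviving cases $r$ divides the odd number $2k+1$ (as $\ell\mid s_k(n)$) and is therefore odd; moreover $r$ is an odd prime in the first case and the order of $\lambda\ne1$ in the second, so $r\ge3$ either way. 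Finally $\ell\mid s_k(n)$ gives $r\mid 2k+1$ and $\ell\mid s_q(n)$ gives $r\mid 2q+1=p$, whence $\gcd(2k+1,p)\ge r>1$.

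The main obstacle is this second direction, and specifically the bookkeeping in the rank-of-apparition step: one must peel off the ``degenerate'' primes dividing $n-2$ or $n+2$ (including $\ell=2$), check that the closed form in~(\ref{expl}) is legitimate modulo $\ell$ (its denominator $\lambda-1$ must be a unit, which is exactly why the congruences $n\equiv\pm2$ are isolated), and make the structural observation that, since $2m+1$ is always odd, the order of $\lambda$ --- and hence the ``rank'' $r$ --- is forced to be odd. That last point is what makes $r$ an odd common divisor of $2k+1$ and $p$ rather than merely some common divisor; it is the feature particular to the odd-index/Lehmer setting, and is the analogue here of the strong divisibility property of Lucas sequences.
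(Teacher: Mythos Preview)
Your proof is correct, but it follows a genuinely different route from the paper's. The paper argues by induction on the odd integers $p\ge3$: the base case $p=3$ is read off from Lemma~\ref{resper}, and the inductive step splits into the ``not coprime'' case (handled exactly as you do, via a common divisor $q\mid p$ and Lemma~\ref{resper}) and the ``coprime'' case, where for $0\le k\le (p-3)/2$ one writes $k=(q-1)/2$ with $q<p$ odd and appeals to the inductive hypothesis with the roles of $p$ and $q$ swapped, then extends to all $k$ by the symmetry~(\ref{sym}) and periodicity. No analysis of characteristic roots modulo a prime is needed.

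Your argument instead proves the ``if'' direction directly by a rank-of-apparition computation: for each prime $\ell$ you determine exactly the set $\{m:\ell\mid s_m(n)\}$, showing it is either empty (when $\ell\mid n+2$) or of the form $\{m:r\mid 2m+1\}$ for a fixed odd $r\ge3$. This is essentially the content of Lemma~\ref{pdiv}, which in the paper is stated \emph{after} Lemma~\ref{star}; you have in effect anticipated it. What your approach buys is a cleaner, non-inductive argument that comes close to establishing the stronger Lemma~\ref{gcd} in one stroke; what the paper's approach buys is a proof that stays entirely within the combinatorics already set up (Lemma~\ref{resper}, the symmetry~(\ref{sym}), and monotonicity), deferring the finite-field analysis to later.
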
 

\begin{proof}
Once again, we drop the argument $n$ for the purposes of the proof, 
and perform  induction on 
the odd integers $p\geq 3$. With $p$  fixed, for each $k$ it will be convenient to consider 
\beq\label{mdef} 
m = (p-1)/2-k .
\eeq 
For the base case $p=3$,  note  that the sequence of $s_k\,\bmod \,s_1$ repeats with period 3, by  Lemma \ref{resper}, and clearly $\gcd({s_0,s_1})=1=\gcd({s_{-1},s_1})$ 
so the pattern is $s_k\equiv -1,1,0\pmod{s_1}$ for $k\equiv-1,0,1\pmod{3}$; 
hence $\gcd({s_k,s_1})=1$ if and only if the quantity 
$m=1-k\not\equiv 0 \pmod{3}$, which is the required result in this case. Now we will assume that the result is true for all odd $q$ with 
$3\leq q <p$, and proceed to show that it is true for $p$.

 Firstly, if for some $k$ the corresponding value of $m$, given by (\ref{mdef}), is not coprime to $p$, then there is some odd $q$ with $3\leq q\leq p$, $q|m$ and $q|p$. Therefore we have 
$$ 
(q-1)/2 - k = (q-p)/2 +m \equiv (q-p)/2 \equiv 0 \pmod{q}. 
$$ 
So by Lemma \ref{resper}, both $s_k\equiv 0 \pmod{s_{(q-1)/2}}$ and $s_{(p-1)/2}\equiv 0 \pmod{s_{(q-1)/2}}$, hence 
$s_k$ and $s_{(p-1)/2}$ are not coprime.  

Thus it remains to show that 
$$\gcd(m,p)=1\implies \gcd(s_{(p-1)/2-m},s_{(p-1)/2})=1.$$ 
Observe that, by Lemma  \ref{resper}, it is sufficient to verify this for values of $m$ between $1$ and $p-1$ (i.e.\ the non-zero residue classes   $\bmod \,p$). First consider $k=(p-1)/2-m$  lying in the range $0\leq k\leq (p-3)/2$: this 
can be written as $k=(q-1)/2$ for some odd positive integer $q$, and $\gcd(m,p)=1$ is equivalent to the requirement that $\gcd(q,p)=1$; 
so either $q=1$ and $\gcd(s_0,s_{(p-1)/2})=1$ is trivially true, 
or $3\leq q<p-2$ and  $\gcd(s_{(q-1)/2}, s_{(p-1)/2})=1$ holds by the inductive hypothesis. 
Now for the range $-(p-1)/2\leq k\leq -1$, the result follows by the symmetry $k \to -1-k$, using (\ref{sym}). Hence, by applying the shift $k\to k+p$ and using Lemma  \ref{resper}, the result is true for all integers $k$ such that 
$\gcd((p-1)/2-k,p)=1$. 
\end{proof}

In fact, it is possible to make a stronger statement about the common factors of the terms of the sequence.

\begin{lemma}\label{gcd} 
For all integers $n\geq 2$ and $j,k\geq 0$, 
$$ 
\gcd\Big(s_j(n),s_k(n)\Big)=s_m(n), \qquad \mathrm{where} \quad 2m+1=\gcd(2j+1,2k+1).
$$
\end{lemma}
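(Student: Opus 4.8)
The plan is to prove the statement by a Euclidean-algorithm argument on the (odd) indices, using the shift formula of Proposition \ref{pshift} as the single reduction step, with induction on $\max(j,k)$. In outline: since $s_k(n)=L^-_{2k+1}(\sqrt{n+2},1)$ by Proposition \ref{lehch}, the claim is the Lehmer-number analogue of the strong divisibility property $\gcd(u_a,u_b)=u_{\gcd(a,b)}$ for Lucas sequences, specialised to odd indices — note that $\gcd(2j+1,2k+1)$ is odd, hence equal to $2m+1$ for a unique integer $m\ge 0$, so the statement even makes sense. However, because $\sqrt{n+2}$ need not be an integer I will argue directly with the sequence $(\,s_k(n)\,)$ rather than quoting the Lucas-sequence result.

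Without loss of generality take $j\ge k$, since both $\gcd(s_j(n),s_k(n))$ and $\gcd(2j+1,2k+1)$ are symmetric in $j,k$. The base cases are $k=0$, where $s_0(n)=1$ forces $\gcd(s_j(n),1)=1=s_0(n)$ while $\gcd(2j+1,1)=1$, and $j=k$, where both sides equal $s_k(n)$. For the inductive step assume $j>k\ge 1$ and apply Proposition \ref{pshift} with $p=2k+1$ and with its index variable taken to be $j-(2k+1)$, giving
\[
s_j(n) = s_{j-(2k+1)}(n) + \Tc_{j-k}(n)\, s_k(n),
\]
where the left-hand factor on the right is read off the extension of the sequence to all integer indices via the symmetry (\ref{sym}). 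Since $n\in\Z$ implies $\Tc_{j-k}(n)\in\Z$, this is a genuine Euclidean step, so $\gcd(s_j(n),s_k(n))=\gcd(s_{j-(2k+1)}(n),s_k(n))$. Using (\ref{sym}) together with the positivity of $s_i(n)$ for $i\ge 0$, $n\ge 2$ (Proposition \ref{inc}), we have $|s_{j-(2k+1)}(n)|=s_r(n)$ where $2r+1=|2j+1-2(2k+1)|$, and a short check of the two cases $2j+1\gtrless 2(2k+1)$ shows $0\le r<j$. At the same time $\gcd(2r+1,2k+1)=\gcd(2j+1,2k+1)$, because $2r+1$ differs from $2j+1$ by a multiple of $2k+1$. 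Hence $\gcd(s_j(n),s_k(n))=\gcd(s_r(n),s_k(n))$, the pair $(r,k)$ has the same index-gcd and strictly smaller maximum, and the inductive hypothesis applied to $(r,k)$ (suitably ordered) completes the proof.

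The scheme is essentially bookkeeping once Proposition \ref{pshift} is available — that proposition does the work of the Lucas/Lehmer addition formula. The only points needing care, and the places where I expect the write-up to be fiddly rather than genuinely hard, are: (i) handling the possibly negative shifted index $j-(2k+1)$, where one replaces $s_{-i}(n)$ by $-s_{i-1}(n)$ so that after passing to absolute values (which is all the gcd sees) one is left with an honest term $s_r(n)$, $r\ge 0$; (ii) confirming $r<j$ strictly, which guarantees termination and which splits into the cases $j\ge 2k+1$ (then $r=j-2k-1$) and $k<j\le 2k$ (then $r=2k-j$); and (iii) the degenerate value $n=2$, where (\ref{mainf}) has a removable singularity but Proposition \ref{pshift}, and hence the whole reduction, still holds as a polynomial identity in $n$. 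None of these obstructs the argument.
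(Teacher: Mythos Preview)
Your proof is correct and takes a genuinely different route from the paper's. The paper does not run a Euclidean algorithm on the indices; instead it first proves the coprime case $\gcd(2j+1,2k+1)=1$ via Lemma~\ref{star} (which in turn rests on Lemma~\ref{resper}), and for the general case writes $2j+1=(2m+1)(2j'+1)$, $2k+1=(2m+1)(2k'+1)$ and invokes the factorization identity~(\ref{facid}) to extract the common factor $s_m(n)$ from both terms, leaving $\gcd\big(s_{j'}(\Tc_{2m+1}(n)),\,s_{k'}(\Tc_{2m+1}(n))\big)$, which is $1$ by the coprime case applied at the new argument $\Tc_{2m+1}(n)$.

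Your argument is more self-contained: it uses only Proposition~\ref{pshift} and the symmetry~(\ref{sym}), bypassing Lemmas~\ref{resper} and~\ref{star} and the Chebyshev factorization entirely, and it is the direct analogue of the standard strong-divisibility proof for Lucas sequences. The paper's approach, by contrast, exploits the Chebyshev structure to change the argument $n\mapsto\Tc_{2m+1}(n)$ rather than shrinking the index, which is less elementary but dovetails with the machinery already built for Theorem~\ref{maint}. Both proofs are short once their respective ingredients are in place; yours has the advantage that it would also yield Lemmas~\ref{resper} and~\ref{star} as immediate corollaries rather than prerequisites.
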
 

\begin{proof} 
Given any $j,k$, suppose that  $2m+1=\gcd(2j+1,2k+1)$. 
The case $m=0$ follows from Lemma \ref{star}, taking $p=2j+1$. If $m>0$, then 
by writing $2j+1=(2m+1)(2j'+1)$, $2k+1=(2m+1)(2k'+1)$ with $\gcd(2j'+1, 2k'+1)=1$,  and applying (\ref{facid}), we have 
$$
\gcd\Big(s_j(n),s_k(n)\Big)=s_m(n) \, \gcd\Big(s_{j'}(\Tc_{2m+1}(n)),s_{k'}(\Tc_{2m+1}(n))\Big) = s_m(n), 
$$ 
by applying Lemma \ref{star} once again.
\end{proof} 

\begin{remark} The preceding result is a special case of a result on the greatest common divisor of a pair of Lehmer numbers: see Lemma 3 in \cite{stewart}. 
\end{remark}

\begin{remark} Since the argument $n$ plays a passive role in most of the above, it is clear that, mutatis mutandis,  
Lemmas \ref{resper},  \ref{star} 
and \ref{gcd} 
also apply to the sequence of polynomials $(\,s_k(n)\,)$ 
in $\Z[n]$. Analogous divisibility properties for the Chebyshev polynomials of the first kind are described in \cite{rtw}. 
\end{remark} 

The preceding results allow the periodicity of the sequence modulo any prime to be described quite precisely. 
The notation $\legendre{\,\cdot\,}{\,\cdot\,}$ is used below to denote the Legendre 
symbol. 

\begin{lemma}\label{pdiv} Let $n\geq 2$ be fixed, and for any prime $q$ let $\pi (q)$ denote the period of the sequence $(s_k(n) \bmod \,q)$. 
 Then $\pi (2)=3$ if and only if $n$ is odd, in which case  $s_k(n)$ is even$\iff k\equiv 1\pmod{3}$, 
while $\pi(2)=1$ and all $s_k(n)$ are odd when $n$ is even. Moreover, for  
 $q$ an odd prime,   one of three possibilities can occur: (i) $\legendre{n^2-4}{q}=\pm1$ and $\pi(q)|q\mp 1$; 
(ii) $n\equiv 2\pmod{q}$ and $\pi(q)=q$ with $s_k(n)\equiv 0\pmod{q}\iff q|2k+1$;  
(iii) $n\equiv -2\pmod{q}$ and $\pi(q)=2$ with $s_k(n)\equiv (-1)^k\pmod{q}$. 
\end{lemma}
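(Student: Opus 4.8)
The plan is to analyze the characteristic quadratic $X^2 - nX + 1$ of the recurrence (\ref{srec}) modulo each prime $q$, splitting into cases according to the behaviour of its discriminant $n^2 - 4$, and in each case to read off the period either from the explicit formula (\ref{expl}), $s_k(n) = (\la^{k+1} - \la^{-k})/(\la - 1)$, or from the trigonometric/Lehmer form (\ref{mainf}), $s_k(n) = \Uc_{2k}(\sqrt{n+2})$. For $q = 2$: the residues of $s_k(n)$ mod $2$ are completely determined by $n \bmod 2$, and one simply computes the first few terms from (\ref{inits})--(\ref{sseq}). When $n$ is even, $s_0 = 1$, $s_1 = n+1 \equiv 1$, and inductively $s_{k+2} \equiv s_k \pmod 2$ forces all terms odd, so $\pi(2) = 1$. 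When $n$ is odd, $s_0,s_1,s_2 \equiv 1,0,1 \pmod 2$ and the recurrence gives the repeating pattern $1,0,1,1,0,1,\dots$ — wait, more carefully: $s_{k+2} \equiv s_{k+1} + s_k \pmod 2$, so the sequence mod $2$ is $1,0,1,1,0,1,\dots$, which has period $3$, with the zero occurring exactly at $k \equiv 1 \pmod 3$. This matches the claim and is essentially Lemma \ref{pdiv}'s case $q=2$.

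For odd primes $q$, I would work in $\F_q$ (or its quadratic extension $\F_{q^2}$) with $\la$ a root of $X^2 - nX + 1 = 0$, noting $\la \cdot \la^{-1} = 1$ so the two roots are genuinely $\la$ and $\la^{-1}$ (the "$\be = \al^{-1}$" normalization from Proposition \ref{lehch}). \textbf{Case (i), $\legendre{n^2-4}{q} = 1$:} then $\la \in \F_q^\times$, $\la \neq \la^{-1}$ (as $n \not\equiv \pm 2$), and the order of $\la$ divides $q-1$; since $s_k(n)$ is an $\F_q$-linear combination of $\la^k$ and $\la^{-k}$, its period divides the order of $\la$, hence divides $q-1$. \textbf{Case (i), $\legendre{n^2-4}{q} = -1$:} then $\la \in \F_{q^2} \setminus \F_q$, and $\la^{-1} = \la^q$ is the Frobenius conjugate, so $\la^{q+1} = \la \cdot \la^q = \la\la^{-1} = 1$; thus the order of $\la$ divides $q+1$ and again the period of $s_k(n)$ divides $q+1$. \textbf{Case (ii), $n \equiv 2 \pmod q$:} the quadratic becomes $(X-1)^2$, a repeated root $\la = 1$, and from (\ref{lin})/(\ref{expl}) the degenerate form gives $s_k(n) \equiv 2k+1 \pmod q$; this is periodic with period $q$ (since $q$ is odd, $2$ is invertible), and vanishes iff $q \mid 2k+1$, exactly as stated. \textbf{Case (iii), $n \equiv -2 \pmod q$:} repeated root $\la = -1$, and from (\ref{nm2})/(\ref{expl}) the degenerate limit gives $s_k(n) \equiv (-1)^k \pmod q$, period $2$, never zero.

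The main obstacles are two bookkeeping points rather than conceptual ones. First, in case (ii) I must handle the repeated-root degeneration of formula (\ref{expl}) carefully: the expression $(\la^{k+1} - \la^{-k})/(\la - 1)$ is a $0/0$ indeterminate at $\la = 1$, so I should instead argue directly from the recurrence $s_{k+2} - 2 s_{k+1} + s_k \equiv 0 \pmod q$ with $s_0 = 1$, $s_1 = n+1 \equiv 3$ — hmm, but actually $s_1 = n+1 \equiv 3 \pmod q$ while $2k+1$ at $k=1$ is $3$, consistent — the general solution of $(\rS - 1)^2 s_k = 0$ is $s_k = A + Bk$, and fitting initial data gives $A = 1$, $B = 2$, so $s_k \equiv 2k+1$; the divisibility-by-$q$ statement is then immediate. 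Similarly in case (iii), $(\rS+1)^2 s_k = 0$ gives $s_k = (-1)^k(A + Bk)$ and the initial data $s_0 = 1$, $s_1 = n + 1 \equiv -1 \pmod q$ force $B = 0$, $A = 1$, so $s_k \equiv (-1)^k$. Second, I should be slightly careful that the period of $(s_k(n) \bmod q)$ genuinely \emph{divides} the order of $\la$ in cases (i) — this is clear since $s_{k + \mathrm{ord}(\la)}(n) = s_k(n)$ termwise once $\la^{\mathrm{ord}(\la)} = 1$, but one should note the period could in principle be a proper divisor (indeed it often is), which is why the lemma only asserts $\pi(q) \mid q \mp 1$ rather than equality. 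Everything else is a direct substitution into formulas already established in the paper, so the proof is short.
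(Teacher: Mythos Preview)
Your argument is correct and follows essentially the same route as the paper: the $q=2$ case by direct inspection of the recurrence mod $2$, and for odd $q$ the trichotomy according to whether $X^2-nX+1$ has distinct roots in $\F_q$, distinct roots in $\F_{q^2}\setminus\F_q$ (handled via Frobenius $\la\mapsto\la^q=\la^{-1}$), or a repeated root $\la=\pm1$; the degenerate cases (ii) and (iii) you resolve by solving $(\rS\mp1)^2 s_k=0$ explicitly, which is equivalent to the paper's appeal to the closed forms $s_k(2)=2k+1$ and $s_k(-2)=(-1)^k$.

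One small inaccuracy worth flagging: your parenthetical remark that in case (i) the period ``could in principle be a proper divisor'' of $\mathrm{ord}(\la)$ is not quite right. In fact $\pi(q)=\mathrm{ord}(\la)$ exactly, because the consecutive vectors $(s_0,s_1)$ and $(s_1,s_2)$ are linearly independent over $\F_q$ (their determinant is $-(n+2)\not\equiv 0$ when $n\not\equiv\pm2$), so $\mathbf{A}^{\pi(q)}$ fixes a basis and hence equals the identity. The paper asserts this equality in passing. Your weaker divisibility statement is of course sufficient for the lemma as stated, so this does not affect the validity of your proof.
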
 
\begin{proof} When  $n$ is even, then since $s_0(n)=1$ and $s_1(n)=n+1$ are both odd, it follows from (\ref{srec}) 
that $s_k(n)$ is odd for all $k$, so $\pi(2)=1$.   For $n$ odd, $s_1(n)$ is even, so by 
 Lemma \ref{star}, 
$s_k(n)$ is even if and only if  $k\equiv 1\pmod{3}$, and $\pi(2)=3$. 

Now let $q$ be an odd prime. For case (i) it is most convenient to consider the behaviour of  $(s_k \bmod \, q)$ in terms of the equivalent sequence 
 defined by the recurrence (\ref{srec}) in the finite field $\F_q$. In that case we have $n>2$, 
and  when  $\legendre{n^2-4}{q}=1$ it follows 
that $n^2-4$ is a quadratic residue mod $q$, so  the 
first formula in (\ref{expl}), which can be rewritten as
\beq\label{forml}
s_k(n)= \frac{\la^{-k}(\la^{2k+1}-1)}{\la -1}, 
\eeq  
remains valid  in terms of $\la\in\F_q$, with $\la\neq \pm 1$, and $\la^{q-1}=1$ in  $\F_q$ by Fermat's little theorem. The terms 
of the sequence repeat with period $\pi(q)=\mathrm{ord}(\la)>2$, the multiplicative order of $\la$ in the group 
 $\F_q^*$, and this divides $q-1$ by Lagrange's theorem. The case  $\legendre{n^2-4}{q}=-1$ is similar, but now  
 $n^2-4$ is a quadratic nonresidue mod $q$, so $\la$ is not defined in $\F_q$ and the formula (\ref{forml}) 
should be interpreted in the field extension $ \F_q[\sqrt{n^2-4}] \simeq\F_{q^2}$. The Frobenius automorphism 
$\la\to\la^q$ exchanges the roots of the quadratic $X^2-n\,X+1=(X-\la)(X-\la^{-1})$, hence $\la^q=\la^{-1}$. 
Thus $\la^{q+1}=1$, and now the sequence given by  (\ref{forml})  
 repeats with period $\pi(q)=\mathrm{ord}(\la)$, the order of $\la$ in 
 $\F_{q^2}^*$, which divides $q+1$.  In case (ii), the sequence $s_k(n)\bmod \,q$ is the same as the 
sequence (\ref{lin}) mod $q$, which first vanishes when $k=(q-1)/2$ and repeats with period $q$, and in case 
(iii) the sequence is equivalent to (\ref{nm2}), which is never zero mod $q$. 
\end{proof}

At this stage it is convenient to introduce the notion of  a primitive prime divisor (sometimes just referred to as a primitive divisor), which is a 
prime factor $q$ that divides $s_k(n)$ but does not divide any of the previous terms in the sequence 
\cite{estw}, and by convention does not divide the discriminant $n^2-4$ either \cite{bilu, schinzel}. 

\begin{definition} Let the product of the discriminant and the first $k$ terms be denoted by 
\beq\label{pik} 
\Pi_{k}(n)=(n^2-4)\, s_1(n)s_2(n) \cdots s_{k}(n). 
\eeq 
A {\it primitive prime  divisor} of $s_k(n)$ is a prime $q|s_k(n)$ such that 
$q\not | \,\Pi_{k-1}(n)$.  
\end{definition} 

Case (i) of Lemma \ref{pdiv} is the most interesting one. 
In that case it is clear from  (\ref{forml}) that a prime $q|s_k(n)$ for some $k$ whenever 
$\la^{2k+1}=1$ in $\F_{q^2}\supset\F_q$, and then  $\pi(q)=\mathrm{ord}(\la)=2k^*+1$ must be odd, where 
$k^*=(\pi(q)-1)/2>0$ is the smallest 
$k$ for which this happens; and if $\pi(q)$ is even then this cannot happen. 
If we include $q=2$, then 
we can rephrase the latter by saying that the prime factors $q$ appearing in the sequence 
$(\,s_k(n)\,)$ are precisely those $q$ which have an odd period $\pi(q)>1$, and this 
consequence of Lemma \ref{pdiv} can be restated  in terms of primitive prime divisors. 

\begin{corollary} \label{papp}
A prime  $q$  is a primitive divisor of $s_k(n)$ if and only if $k=(\pi(q)-1)/2$ 
where  $\pi(q)$ is odd. 
Moreover, if $q$ is odd and $\legendre{n^2-4}{q}=\pm1$ then 
$q=2a\pi(q)\pm 1$ for some positive integer $a$. 
\end{corollary}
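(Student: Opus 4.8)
The plan is to read off both assertions directly from Lemma \ref{pdiv} and Corollary \ref{altdef} (the alternative characterization of first appearances), using the explicit formula (\ref{forml}) to locate exactly where a prime $q$ first divides a term. First I would recall from Lemma \ref{pdiv} that a prime $q$ has an associated period $\pi(q)$, and that in case (i) the sequence $(s_k(n) \bmod q)$ can be written as $s_k(n) \equiv \la^{-k}(\la^{2k+1}-1)/(\la-1)$ in $\F_q$ or $\F_{q^2}$, with $\la$ of multiplicative order $\pi(q)$. So $q \mid s_k(n)$ if and only if $\la^{2k+1}=1$, i.e.\ $\pi(q) \mid 2k+1$. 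Since $2k+1$ is odd, this can happen for some $k$ only when $\pi(q)$ is odd, and then the smallest such $k$ is $k^{*}=(\pi(q)-1)/2$; all other $k$ with $q \mid s_k(n)$ satisfy $k \equiv k^{*} \pmod{\pi(q)}$ and hence $k > k^{*}$. For $q=2$: by the first part of Lemma \ref{pdiv}, $2$ divides some $s_k(n)$ iff $n$ is odd, in which case $\pi(2)=3$ (odd) and the first occurrence is at $k=1=(\pi(2)-1)/2$; if $n$ is even then $\pi(2)=1$, which is not $>1$, and indeed $2$ never divides a term. In cases (ii) and (iii) of Lemma \ref{pdiv}, $q$ divides the discriminant $n^2-4$ (since $n \equiv \pm 2 \pmod q$), so by convention such $q$ is excluded from being a primitive divisor; note that in case (ii) $\pi(q)=q$ is odd but $q$ appears at $k=(q-1)/2$ already dividing $n^2-4 = \Pi_0(n)$, and in case (iii) $\pi(q)=2$ is even. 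Thus in every case the primitive-divisor condition $q \mid s_k(n)$, $q \nmid \Pi_{k-1}(n)$, is equivalent to: $q$ occurs in the sequence (equivalently $\pi(q)$ is odd and $>1$) and $k$ is its first point of occurrence, namely $k=(\pi(q)-1)/2$. This proves the first sentence.

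For the second sentence, assume $q$ is odd with $\legendre{n^2-4}{q}=\pm 1$, so we are in case (i). When $\legendre{n^2-4}{q}=1$ we have $\la \in \F_q^{*}$ with $\pi(q)=\mathrm{ord}(\la) \mid q-1$; when $\legendre{n^2-4}{q}=-1$ we have $\la \in \F_{q^2}^{*}$ with $\pi(q)=\mathrm{ord}(\la) \mid q+1$, as established in the proof of Lemma \ref{pdiv}. So $q \mp 1 = \pi(q)\, b$ for some positive integer $b$. Now I use that $q$ is odd: $q \mp 1$ is even, while $\pi(q)$ is odd (from the first part, since $q$ appears in the sequence), so $b$ must be even, say $b=2a$, whence $q = \pi(q)\,b \pm 1 = 2a\,\pi(q) \pm 1$ with $a \geq 1$.

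I do not expect any genuine obstacle here: the corollary is essentially a repackaging of Lemma \ref{pdiv} together with the observation that $\pi(q)$ must be odd for $q$ to divide any term, and the parity argument for the last claim is immediate. The only point requiring a little care is the bookkeeping of which cases of Lemma \ref{pdiv} are excluded by the "does not divide $n^2-4$" clause in the definition of primitive divisor — cases (ii) and (iii) — so that the characterization really does reduce to case (i) together with $q=2$; I would state that explicitly to make the "if and only if" airtight.
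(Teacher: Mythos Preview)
Your approach is essentially the paper's own: the corollary is presented there as an immediate consequence of Lemma \ref{pdiv}, with the key observation (stated in the paragraph just before the corollary) that in case (i) one has $q\mid s_k(n)\iff \la^{2k+1}=1$ in $\F_{q^2}$, forcing $\pi(q)=\mathrm{ord}(\la)$ odd with first appearance at $k^*=(\pi(q)-1)/2$. Your treatment of $q=2$ and of the parity argument for the second sentence is exactly right and matches the paper's intent (the sentence following the corollary confirms that the ``moreover'' clause is meant for odd primitive divisors, so your use of ``$\pi(q)$ odd from the first part'' is the correct reading).

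One small point to tighten: your concluding sentence in the first paragraph says the primitive-divisor condition is equivalent to ``$q$ occurs in the sequence (equivalently $\pi(q)$ is odd and $>1$) and $k$ is its first point of occurrence''. But you yourself noted that in case (ii) one has $\pi(q)=q$ odd, $q$ does occur, and yet $q$ is \emph{not} a primitive divisor because $q\mid n^2-4$. So ``$\pi(q)$ odd and $>1$'' is not literally equivalent to the full primitive-divisor condition; the discriminant exclusion must remain explicit. The paper's statement is a little informal on this point too (it tacitly restricts to $q\nmid n^2-4$), so this is more a matter of phrasing than a gap in your argument --- just make sure your final summary reads ``$q\nmid n^2-4$, $\pi(q)$ odd, and $k=(\pi(q)-1)/2$'' rather than folding the discriminant condition into the parity condition.
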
 
 
The latter statement just says that an odd primitive divisor of $s_k(n)$ has the form 
$q=2a(2k+1)\pm 1$ for some $a\geq 1$, so the minus sign with $a=1$ gives the lower bound $q\geq 4k+1$.  Hence 
 the primes that do not appear as factors in the sequence can also be characterized. 
\begin{corollary} \label{rp} 
If a prime $q<4k+1$ is not a factor of $\Pi_{k-1}(n)$,  
then it never 
appears as a factor of $s_j(n)$  for $j\geq k$, and $\pi(q)$ is even.
\end{corollary}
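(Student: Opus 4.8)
The plan is to argue by contradiction, reducing to the first term in which $q$ occurs and then invoking Corollary \ref{papp}. Suppose $q$ is a prime with $q<4k+1$ and $q\nmid\Pi_{k-1}(n)$, but $q\mid s_j(n)$ for some $j\geq k$. We may assume $q$ is odd: for the even prime, $2\nmid\Pi_{k-1}(n)$ together with $k\geq 2$ forces $n$ even (since otherwise $s_1(n)=n+1$ is an even factor of $\Pi_{k-1}(n)$), and then Lemma \ref{pdiv} makes every $s_j(n)$ odd, so there is nothing to prove. Since $\Pi_{k-1}(n)=(n^2-4)\,s_1(n)\cdots s_{k-1}(n)$, the hypothesis gives $q\nmid n^2-4$ and $q\nmid s_i(n)$ for $1\leq i\leq k-1$. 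Let $j_0\geq1$ be the least index with $q\mid s_{j_0}(n)$; then $j_0\geq k$, and by minimality of $j_0$ together with $q\nmid n^2-4$ we get $q\nmid\Pi_{j_0-1}(n)$, i.e.\ $q$ is a primitive prime divisor of $s_{j_0}(n)$.

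Now I would feed this into Corollary \ref{papp}: it gives $j_0=(\pi(q)-1)/2$ with $\pi(q)$ odd, so $\pi(q)=2j_0+1\geq2k+1$. Because $q\nmid n^2-4$ we have $n\not\equiv\pm2\pmod q$, so cases (ii) and (iii) of Lemma \ref{pdiv} are ruled out and $\legendre{n^2-4}{q}=\pm1$, i.e.\ case (i) applies. The ``moreover'' clause of Corollary \ref{papp} then yields $q=2a\,\pi(q)\pm1$ for some integer $a\geq1$, whence
\[
q\;\geq\;2\pi(q)-1\;=\;4j_0+1\;\geq\;4k+1,
\]
contradicting $q<4k+1$. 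Therefore no term $s_j(n)$ with $j\geq1$ is divisible by $q$; in particular $q$ never divides $s_j(n)$ for $j\geq k$.

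It remains to deduce that $\pi(q)$ is even. If $\pi(q)$ were odd, then — since $q\nmid n^2-4$ puts us in case (i) of Lemma \ref{pdiv}, or in case (ii) when $\pi(q)=q$ — the prime $q$ would divide $s_{(\pi(q)-1)/2}(n)$, exactly the equivalence recorded in the discussion following Corollary \ref{papp}, contradicting what was just shown; hence $\pi(q)$ is even. The step I expect to need the most care is the reduction to the first occurrence $j_0$ and the bookkeeping that makes the extremal value of $2a\,\pi(q)\pm1$ equal $4j_0+1$ rather than something weaker, together with checking that the excluded cases (ii) and (iii) of Lemma \ref{pdiv} genuinely cannot arise — all of which hinge on $q\nmid n^2-4$ extracted from $q\nmid\Pi_{k-1}(n)$. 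The only truly exceptional situation is the even prime, which as noted is disposed of directly by the parity statement in Lemma \ref{pdiv}.
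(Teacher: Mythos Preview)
Your argument is correct and follows exactly the route the paper indicates in the sentence preceding the corollary: an odd primitive divisor of $s_{j_0}(n)$ must satisfy $q\geq 4j_0+1$, so a prime $q<4k+1$ not dividing $\Pi_{k-1}(n)$ cannot first appear at any $j_0\geq k$. You have simply fleshed out the details---the separate treatment of $q=2$, the reduction to the minimal index $j_0$, and the parity of $\pi(q)$---that the paper leaves implicit; note only that your appeal to ``$k\geq 2$'' in the even-prime case is tacitly needed (the stated corollary has a harmless edge case at $k=1$, $q=2$, $n$ odd).
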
 

So far we have concentrated on properties of $s_k(n)$ for fixed $n$ and allowed $k$ to vary. However, 
if one is interested in finding factors of $s_k(n)$ for large $n$, then it may also be worthwhile 
to consider other values of $n$, as the following result shows. 

\begin{proposition} 
Suppose that an integer $f|s_k(n)$ for some $k,n$. Then $f|s_k(m)$ 
whenever $m\equiv n \pmod{f}$. 
\end{proposition}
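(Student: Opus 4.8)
The plan is to exploit the fact, noted already in the introduction and made explicit in Proposition \ref{uprop} and equation (\ref{nexp}), that for each fixed $k\geq 0$ the term $s_k(n)$ is a polynomial in $n$ with integer coefficients. Reduction modulo $f$ is a ring homomorphism $\Z\to\Z/f\Z$, which extends coefficientwise to $\Z[n]\to(\Z/f\Z)[n]$ and commutes with evaluation; hence for any $P\in\Z[n]$ and any integers $a\equiv b\pmod f$ one has $P(a)\equiv P(b)\pmod f$. Applying this with $P=s_k$, $a=m$, $b=n$ gives $s_k(m)\equiv s_k(n)\pmod f$, and since $f\mid s_k(n)$ by hypothesis we conclude $f\mid s_k(m)$.

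Alternatively, and perhaps more in keeping with the elementary tone of the paper, I would argue directly by induction on $k$ using the defining recurrence (\ref{srec}) and the initial values (\ref{inits}). For the base cases, $s_0(m)=1=s_0(n)$ and $s_1(m)=m+1\equiv n+1=s_1(n)\pmod f$. For the inductive step, assuming $s_k(m)\equiv s_k(n)$ and $s_{k+1}(m)\equiv s_{k+1}(n)\pmod f$, the recurrence gives
\[
s_{k+2}(m)=m\,s_{k+1}(m)-s_k(m)\equiv n\,s_{k+1}(n)-s_k(n)=s_{k+2}(n)\pmod f,
\]
using $m\equiv n\pmod f$. Thus $s_k(m)\equiv s_k(n)\pmod f$ for all $k\geq 0$, and the claim follows as before; one may also extend to all $k\in\Z$ via the symmetry (\ref{sym}).

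There is no genuine obstacle here: the statement is essentially the observation that a congruence $m\equiv n\pmod f$ propagates through a fixed integer-polynomial (equivalently, through the integer recurrence) to its value. The only points worth stating carefully are that the coefficients of $s_k$ are integers and that $f$ need not be prime, so that we work in the ring $\Z/f\Z$ rather than a field; neither causes any difficulty.
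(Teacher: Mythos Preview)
Your first argument is exactly the paper's proof: since $s_k$ is a polynomial in its argument with integer coefficients, $m\equiv n\pmod{f}$ forces $s_k(m)\equiv s_k(n)\equiv 0\pmod{f}$. The inductive alternative via the recurrence is also fine and equally elementary, but it is not needed here.
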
 
\begin{proof}
If $m\equiv n \pmod{f}$ then $m^j\equiv n^j \pmod{f}$ for any exponent $j\geq 0$, and 
since $s_k(m)$ is a polynomial in $m$ with integer coefficients, it follows that 
 $s_k(m)\equiv s_k( n)\equiv 0 \pmod{f}$, as required. 
\end{proof} 

\section{Generic factorization for Chebyshev values} 
\label{chebv}

The sequence $(\,s_k(n)\,)$ has special properties when $n$ is given by a dilated Chebyshev polynomial of the 
first kind evaluated at an integer value of the argument.  
\begin{theorem} \label{factors} 
For all integers $p\geq 2$, when $n = \Tc_p(j)$ for some integer $j$ 
the terms of the sequence  $(\,s_k(n)\,)$
can be factorized as a product of 
rational numbers, that is 
\beq\label{fac} 
s_k (\Tc_p(j)) = R_k(j) \, s_k(j)  ,
\eeq 
where the prefactors $R_k(j)\in\Q$ 
are given by 
\beq\label{rform} 
R_k(j) = \frac{\Uc_{p-1}(\Tc_{2k+1}(\sqrt{j+2}))}{\Uc_{p-1}(\sqrt{j+2})} 
\eeq 
and satisfy a linear recurrence of order $p$. 
In particular, for $p=2$ the prefactor is 
$R_k(j)=r_k(j)\in \Z$, 
as  given in Theorem \ref{t2},  
while for all odd $p$ the prefactor can be written as 
\beq \label{poddf} 
R_k(j) = \frac{s_{(p-1)/2}(\Tc_{2k+1}(j))}{s_{(p-1)/2}(j)} \in\Q, 
\eeq 
and satisfies the recurrence 
\beq\label{podd} 
(\rS-1)\prod_{i=1}^{(p-1)/2}(\rS^2 -\Tc_{2i}(j)\,\rS +1) \, R_k(j) =0.
\eeq 
\end{theorem}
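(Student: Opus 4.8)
The plan is to reduce everything to the characteristic roots, in the same spirit as the proof of Theorem~\ref{t2}. Fix an integer $j$ and, to sidestep the finitely many degenerate cases already described in Section~1, assume $|j|\ge 3$; put $m=\sqrt{j+2}$ and let $\al$ satisfy $\al+\al^{-1}=m$ (so $\al$ is not a root of unity, hence $s_k(j)\ne 0$ for all $k$), whence $j=m^{2}-2=\al^{2}+\al^{-2}$ and, using $\Tc_r(m)=\al^{r}+\al^{-r}$ together with (\ref{tid}), $\Tc_p(j)+2=(\al^{p}+\al^{-p})^{2}=\Tc_p(m)^{2}$. By Proposition~\ref{lehch}, $s_k(N)=L^-_{2k+1}(\sqrt{N+2},1)$; applying this with $N=n=\Tc_p(j)$ and with $N=j$, and noting that for odd-index Lehmer numbers the choice of square root is immaterial, gives
\[
s_k(\Tc_p(j))=\frac{\al^{p(2k+1)}-\al^{-p(2k+1)}}{\al^{p}-\al^{-p}},
\qquad
s_k(j)=\frac{\al^{2k+1}-\al^{-(2k+1)}}{\al-\al^{-1}}.
\]
Dividing and writing $\be=\al^{2k+1}$, the quotient $R_k(j)=s_k(\Tc_p(j))/s_k(j)$ rearranges to $\frac{\be^{p}-\be^{-p}}{\be-\be^{-1}}\cdot\frac{\al-\al^{-1}}{\al^{p}-\al^{-p}}=\frac{\Uc_{p-1}(\be+\be^{-1})}{\Uc_{p-1}(\al+\al^{-1})}=\frac{\Uc_{p-1}(\Tc_{2k+1}(m))}{\Uc_{p-1}(m)}$, using $\Uc_{p-1}(\gamma+\gamma^{-1})=(\gamma^{p}-\gamma^{-p})/(\gamma-\gamma^{-1})$ from (\ref{lucas}) and $\be+\be^{-1}=\Tc_{2k+1}(m)$. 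This is (\ref{fac}) with the prefactor (\ref{rform}). (Equivalently one can avoid roots and derive (\ref{fac})--(\ref{rform}) from two applications of the identity (\ref{uid}) to $s_k(n)=\Uc_{2k}(\sqrt{n+2})=\Uc_{2k}(\Tc_p(m))$, using that $\Uc_{2k}$ is an even polynomial.) That $R_k(j)\in\Q$ is then immediate, since $R_k(j)=s_k(\Tc_p(j))/s_k(j)$ is a ratio of integers with nonzero denominator (alternatively, by parity the numerator and denominator of (\ref{rform}) are, after cancelling a common factor $m$ when $p$ is even, polynomials in $m^{2}=j+2$).

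For the order-$p$ recurrence I would follow the discussion around (\ref{diss}). Since $\Uc_{p-1}(m)$ is independent of $k$, it is enough to treat the numerator, and here $\Uc_{p-1}(\Tc_{2k+1}(m))=\frac{\be^{p}-\be^{-p}}{\be-\be^{-1}}=\sum_{\ell=0}^{p-1}\be^{\,p-1-2\ell}=\sum_{\ell=0}^{p-1}\al^{\,p-1-2\ell}\bigl(\al^{\,2(p-1-2\ell)}\bigr)^{k}$, a linear combination of $k$th powers of the $p$ quantities $\mu_\ell:=\al^{2(p-1-2\ell)}$, $\ell=0,\dots,p-1$. Hence $R_k(j)$ satisfies $\prod_{\ell=0}^{p-1}(\rS-\mu_\ell)R_k(j)=0$, a recurrence of order $p$. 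To pin down the characteristic polynomial one notes $\mu_{p-1-\ell}=\mu_\ell^{-1}$ and $\mu_\ell+\mu_\ell^{-1}=\al^{2d}+\al^{-2d}=\Tc_d(\al^{2}+\al^{-2})=\Tc_d(j)$ with $d=|p-1-2\ell|$. When $p$ is odd the involution $\ell\mapsto p-1-\ell$ has the single fixed point $\ell=(p-1)/2$, contributing the root $\mu_{(p-1)/2}=1$, while the remaining $(p-1)/2$ reciprocal pairs run through $d=2,4,\dots,p-1$, which collects precisely into (\ref{podd}). When $p$ is even there is no fixed point and the $p/2$ pairs run through the odd values $d=1,3,\dots,p-1$, again of total degree $p$ (in particular, for $p=2$ this reads $\rS^{2}-\Tc_1(j)\rS+1=\rS^{2}-j\rS+1$).

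It remains to record the two special forms. For $p=2$ one has $\Uc_1(x)=x$, so (\ref{rform}) gives $R_k(j)=\Tc_{2k+1}(m)/m=(\al^{2k+1}+\al^{-(2k+1)})/(\al+\al^{-1})=L^+_{2k+1}(\sqrt{j+2},1)=r_k(j)$ by (\ref{skleh}), and its integrality and the recurrence (\ref{p2})--(\ref{rjinits}) are exactly Theorem~\ref{t2}. For odd $p$, use $s_{(p-1)/2}(N)=\Uc_{p-1}(\sqrt{N+2})$ from (\ref{mainf}): the denominator of (\ref{rform}) is $\Uc_{p-1}(m)=s_{(p-1)/2}(j)$, while from $\Tc_{2k+1}(m)^{2}=\Tc_{4k+2}(m)+2=\Tc_{2k+1}(m^{2}-2)+2=\Tc_{2k+1}(j)+2$ and the evenness of $\Uc_{p-1}$ (valid since $p-1$ is even) one gets $\Uc_{p-1}(\Tc_{2k+1}(m))=\Uc_{p-1}(\sqrt{\Tc_{2k+1}(j)+2})=s_{(p-1)/2}(\Tc_{2k+1}(j))$, which is (\ref{poddf}). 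The main obstacle is the recurrence step: exhibiting $p$ characteristic roots is routine, but one must carefully track their reciprocal pairing and the exceptional root $1$ (present exactly when $p$ is odd) to reach the precise characteristic polynomial (\ref{podd}) rather than merely some order-$p$ recurrence; one should also remark that coincidences among the $\mu_\ell$ for special $j$ only shrink the minimal recurrence, so the stated order-$p$ recurrence still holds.
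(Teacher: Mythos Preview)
Your proof is correct and follows essentially the same route as the paper's. Both arguments work with the characteristic roots—the paper via the trigonometric substitution $\sqrt{j+2}=2\cos(\phi/2)$ and $\mu=e^{\ri\phi}$, you via the algebraic $\al$ with $\al+\al^{-1}=\sqrt{j+2}$ (so that $\al^{2}=\mu$)—to compute the quotient $s_k(\Tc_p(j))/s_k(j)$ and identify it as the ratio (\ref{rform}); and both obtain the order-$p$ recurrence by exhibiting the $p$ characteristic roots $\mu^{\pm(p-1)},\mu^{\pm(p-3)},\ldots$ and pairing them via $\mu^{d}+\mu^{-d}=\Tc_d(j)$, with the lone root $1$ appearing precisely when $p$ is odd. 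Your explicit handling of the small-$|j|$ degeneracies and of possible coincidences among the $\mu_\ell$ is a useful addition, but the underlying argument is the same as the paper's.
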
 
\begin{proof} Upon introducing $\phi$ such that $\ell = \sqrt{j+2}=2\,\cos(\phi /2)$, 
the formula (\ref{mainf}) gives  
$$ 
R_k(j) = \frac{s_k (\Tc_p(j)) }{s_k(j)} = \frac{\sin\Big((2k+1)p\phi/2\Big)\sin(\phi/2)} 
{\sin(p\phi/2)\sin\Big((2k+1)\phi/2\Big)}, 
$$ 
and  the definition of the dilated Chebyshev polynomials 
of the second kind in (\ref{1st}) produces (\ref{rform}). 
For integer $j$, $R_k(j)$ is a ratio of  integers, so it is a rational number 
(positive for $j\geq 2$).
In the case $p=2$, $R_k(j)=r_k(j)$, which can be written in the form 
\beq\label{p2f} r_k(j) = \frac{\Tc_{2k+1}(\sqrt{j+2})}{\sqrt{j+2}}, \eeq 
which  is an integer, as follows from the fact that $\Uc_1(\ell) =\ell$, 
and this ratio is an even polynomial of  degree $2k$ in $\ell$ with integer coefficients, hence it is a polynomial of degree $k$  in $j$; and by (\ref{tb}) it is positive for real $j\geq 2$, and takes positive integer values for integers $j$ in this range.  
In the case that $p$ is odd, the expression (\ref{poddf}) is found by applying the formula (\ref{mainf})  to the numerator and denominator of (\ref{rform}). 

To see that $R_k(j)$ satisfies a linear recurrence of order $p$, note that, upon setting $\mu = \exp(\ri\phi )$ and applying the first formula in (\ref{expl}) with $\la=\mu^p$,  the factorization (\ref{fac}) can be seen as a consequence of the elementary algebraic identity 
\beq\label{algi}
\frac{\mu^{p(k+1)}-\mu^{-pk}}{\mu^p -1} =
\left( 
\frac{ \sum_{j=0}^{p-1}\mu^{(k+1)(p-1)-(2k+1)j}}{\mu^{p-1}+\mu^{p-2}+\cdots + 1}
\right) 
\, \left(\frac{\mu^{k+1}-\mu^{-k}}{\mu -1}\right) ,
\eeq 
where the first factor on the right-hand side above is just $R_k(j)$.
Thus the denominator of the expression for $R_k(j)$ in (\ref{algi}) is $\sum_{j=0}^{p-1}\mu^j$, 
which is  independent of $k$, while the numerator is 
a linear combination of $k$th powers of the characteristic 
roots $\mu^{(p-1)}, \mu^{(p-3)}, \ldots, \mu^{-(p-3)}, \mu^{-(p-1)}$, giving a total of $p$ distinct roots. When $p$ is 
even, the roots come in $p/2$ pairs, namely $\mu^{\pm(2i-1)}$ for $i=1,\ldots,p/2$, which gives the characteristic 
polynomial 
$$
F(\la) = \prod_{i=1}^{p/2}(\la^2-\Tc_{2i-1}(j)\la+1),
$$ 
so that, in particular, for $p=2$ the recurrence satisfied by $R_k(j)$ is (\ref{p2}), 
while for $p$ odd there are the pairs  $\mu^{\pm 2i}$ for $i=1,\ldots,(p-1)/2$ together with the root 1, 
which yields (\ref{podd}). 
\end{proof}

\begin{theorem}\label{maint}
Let $(a_n)_{n\geq 1}$ be the sequence  specified by Definition \ref{adef}. If 
$n = \Tc_2(j)$ for some $j\geq 3$, then $a_n = -1$. Furthermore, if 
$n = \Tc_p(j)$ for some $j\geq 3$ with $p$ an odd prime, then either 
$s_{(p-1)/2}(n)$ is not prime and $a_n = -1$, or $s_{(p-1)/2}(n)$ is 
the only prime in the sequence $(\,s_k(n)\,)_{k\geq 0}$ and  
$a_n = (p-1)/2$. 
\end{theorem}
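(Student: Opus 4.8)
The plan is to leverage the factorization $s_k(\Tc_p(j))=R_k(j)\,s_k(j)$ of Theorem \ref{factors} together with Lemma \ref{2kp1}, which already confines prime terms to indices $k$ with $2k+1$ prime. First I would dispose of $p=2$: here $n=\Tc_2(j)=j^2-2$, and Theorem \ref{t2} (equivalently Theorem \ref{factors} at $p=2$) gives $s_k(n)=r_k(j)\,s_k(j)$ with both factors integers. For $k\ge1$ and $j\ge3$ both exceed $1$ --- $s_k(j)\ge s_1(j)=j+1\ge4$ by Proposition \ref{inc}, while $r_k(j)\ge r_1(j)=j-1\ge2$ since $(\,r_k(j)\,)_{k\ge0}$ is strictly increasing for $j\ge3$ by a routine induction from the recurrence (\ref{p2}). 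Hence $s_k(n)$ is composite for every $k\ge1$, and since $s_0(n)=1$ the sequence has no prime term, so $a_n=-1$.

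For the main case take $n=\Tc_p(j)$ with $p$ an odd prime and $j\ge3$, so that $n\ge\Tc_3(3)=18$ and $k^{\ast}:=(p-1)/2\ge1$. By Lemma \ref{2kp1}, a prime term $s_k(n)$ with $k\ge1$ forces $2k+1=q$ to be an odd prime, and I would show that for every odd prime $q\ne p$ the term $s_{(q-1)/2}(n)$ is composite. The crucial observation is that $R_{(q-1)/2}(j)$ is then an \emph{integer}. Indeed, since $q\mid qp$ and $p\mid qp$, Lemma \ref{gcd} gives $s_{(q-1)/2}(j)\mid s_{(qp-1)/2}(j)$ and $s_{(p-1)/2}(j)\mid s_{(qp-1)/2}(j)$, while the same lemma yields $\gcd\big(s_{(q-1)/2}(j),s_{(p-1)/2}(j)\big)=s_0(j)=1$ because $\gcd(q,p)=1$; hence their product divides $s_{(qp-1)/2}(j)$, and applying the identity (\ref{facid}) to $s_{(qp-1)/2}(j)$ (with $a=q,\,b=p$ and with $a=p,\,b=q$) identifies $R_{(q-1)/2}(j)=s_{(qp-1)/2}(j)\big/\big(s_{(q-1)/2}(j)\,s_{(p-1)/2}(j)\big)\in\Z$. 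Moreover $\Tc_p(j)>j$ (since $\Tc_p$ is increasing in both index and argument on $[2,\infty)$, and $\Tc_2(j)=j^2-2>j$ for $j\ge3$), and $s_k$ is strictly increasing in its argument on $[2,\infty)$ for $k\ge1$ by (\ref{texp}) and the monotonicity of the $\Tc_i$ established in Proposition \ref{inc}; hence $R_{(q-1)/2}(j)=s_{(q-1)/2}(n)/s_{(q-1)/2}(j)>1$, so $R_{(q-1)/2}(j)\ge2$. Combined with $s_{(q-1)/2}(j)\ge j+1\ge4$ this exhibits $s_{(q-1)/2}(n)$ as a product of two integers $\ge2$, hence composite.

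It then follows that $s_k(n)$ is composite for every $k\ge1$ with $k\ne k^{\ast}$: if $2k+1$ is not prime this is Lemma \ref{2kp1} (using $s_k(n)\ge n+1>1$), and if $2k+1=q$ is an odd prime then necessarily $q\ne p$ and the previous paragraph applies. Since also $s_0(n)=1$, the only index that can give a prime is $k=k^{\ast}=(p-1)/2$; thus either $s_{(p-1)/2}(n)$ is not prime and $(\,s_k(n)\,)_{k\ge0}$ has no prime term, so $a_n=-1$, or $s_{(p-1)/2}(n)$ is prime and is the unique prime term, so $a_n=(p-1)/2$, which is the claimed dichotomy. The one genuinely substantive step is the integrality of $R_{(q-1)/2}(j)$ for $q\ne p$: this rests on applying the telescoping factorization (\ref{facid}) in both orders to the single integer $s_{(qp-1)/2}(j)$ and invoking the coprimality in Lemma \ref{gcd}; once integrality is secured, the size comparisons reduce to the monotonicity facts already in hand.
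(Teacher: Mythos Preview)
Your argument is correct and uses the same core ingredients as the paper (the factorization of Theorem \ref{factors}, the coprimality from Lemma \ref{gcd}/\ref{star}, and the monotonicity facts from Proposition \ref{inc}), but the organization is genuinely tidier. The paper treats \emph{every} index $k\ge1$ and splits according to whether $k\equiv (p-1)/2\pmod p$ or not, handling the congruent case via the separate factorization $s_k(\Tc_p(j))=s_i(\Tc_p(j))\cdot s_{(p-1)/2}(\Tc_{2k+1}(j))$ in (\ref{cases}). By invoking Lemma \ref{2kp1} at the outset, you restrict attention to indices with $2k+1=q$ prime; since $q\ne p$ forces $\gcd(q,p)=1$, you never land in the congruent residue class, and the paper's first case is absorbed into Lemma \ref{2kp1} rather than re-derived. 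Your integrality argument---writing $R_{(q-1)/2}(j)=s_{(qp-1)/2}(j)/\big(s_{(q-1)/2}(j)\,s_{(p-1)/2}(j)\big)$ via (\ref{facid}) in both orderings and then invoking coprimality from Lemma \ref{gcd}---is equivalent to the paper's use of Lemma \ref{star} on (\ref{sikid}), just phrased through the single ambient integer $s_{(qp-1)/2}(j)$. Your bound $R_{(q-1)/2}(j)>1$ via monotonicity of $s_k$ in its argument (from (\ref{texp})) is a clean alternative to the paper's explicit inequality chain using (\ref{texp}) and (\ref{tid}). Overall both proofs rest on the same identities; yours trades the two-case analysis for an upfront appeal to Lemma \ref{2kp1}, which is a modest but real simplification.
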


\begin{proof} First of all, consider the factorization (\ref{fac}) when $p=2$, with prefactor $r_k(j)$ as in Theorem \ref{t2}, given by (\ref{p2f}). 
When $j=2$ this is not interesting, because it gives $r_k(j)=1$ for all $j$. However,  note the property (mentioned in passing in the proof of Proposition \ref{inc}), 
that for real $n>2$, the sequence $(\,\Tc_k(n)\,)_{k\geq 0}$ is strictly 
increasing with the index. Hence, for all $k>0$, $\Tc_{2k+1}(\sqrt{j+2})>\sqrt{j+2} = \Tc_{1}(\sqrt{j+2})$. Thus for all 
$j\geq 3$ and $k\geq 1$, 
both factors $r_k(j)$, $s_k(j)$ are greater than 1, so $s_k(T_2(j))$ can never be prime, and $a_n=-1$. 

Now for any odd prime $p$, note that, a priori, the prefactor $R_k(j)$  in  (\ref{fac})  is a positive rational number, 
and the formula (\ref{poddf}) gives 
\beq\label{sikid} 
s_k (\Tc_p(j)) = \frac{ s_k(j)  \, s_{(p-1)/2}(\Tc_{2k+1}(j))}{s_{(p-1)/2}(j)}. 
\eeq 
 However, according to Lemma \ref{resper}, $s_{(p-1)/2}(j)|s_k(j)$ whenever $k\equiv (p-1)/2 \pmod{p}$. 
On the other hand, for all other values of  $k\not\equiv (p-1)/2 \pmod{p}$, Lemma \ref{star} says that 
$\gcd(s_k(j) , s_{(p-1)/2}(j))=1$, therefore $s_{(p-1)/2}(j)$ divides $s_{(p-1)/2}(\Tc_{2k+1}(j))$ and $R_k(j)\in\Z$. 
Thus, for all $k$,  the terms $s_k (\Tc_p(j))$ can be written as a product of two integers, that is 
\beq\label{cases} 
s_k (\Tc_p(j)) = \begin{cases}
 \hat{R}_k(j)\, s_{(p-1)/2}(\Tc_{2k+1}(j)), & k\equiv (p-1)/2 \pmod{p}; \\
 R_k(j)\,s_k(j), & \text{otherwise}, 
\end{cases} 
\eeq 
where $R_k(j)$ is given by (\ref{poddf}) as above, and  
\beq\label{rt} 
\hat{R}_k(j) =  \frac{ s_k(j) }{s_{(p-1)/2}(j)} = s_i(\Tc_p(j)) \quad \mathrm{for}\,\, k=(p-1)/2+ip,  
\eeq 
with the latter formula being obtained from (\ref{facid}). 
In the first case of (\ref{cases}) above, for $k=(p-1)/2$ the prefactor is $\hat{R}_{(p-1)/2}(j)=1$, while 
$\hat{R}_{k}(j)>1$ for all $k=(p-1)/2+i p$, $i\geq 1$,  
by Lemma \ref{inc}, and the other factor is $s_{(p-1)/2}(\Tc_{2k+1}(j))>1$ for all these values of $k$. 
In the second case, for $k>0$, 
we can use (\ref{texp}) together with (\ref{tid}) to write 
$$
\begin{array}{rcl}  
s_{(p-1)/2}(\Tc_{2k+1}(j)) & = & \frac{1}{2}\Tc_0 + \sum_{i=1}^{(p-1)/2}\Tc_i(\Tc_{2k+1}(j)) \\
& = & \frac{1}{2}\Tc_0 + \sum_{i=1}^{(p-1)/2}\Tc_{(2k+1)i}(j) \\ 
& > & \frac{1}{2}\Tc_0 + \sum_{i=1}^{(p-1)/2}\Tc_{i}(j) = s_{(p-1)/2}(j) , 
\end{array} 
$$
so $s_{(p-1)/2}(\Tc_{2k+1}(j)) / s_{(p-1)/2}(j)>1$. 
Hence both factors $R_k(j)$, $s_k(j)$ are greater than 1 in the second case of (\ref{cases}). Thus 
the only term that can be prime  is  $s_{(p-1)/2} (\Tc_p(j))$, 
and the result is proved. 
\end{proof} 

\begin{remark} 
For any odd $p=2i+1$, the identity (\ref{sikid}) can 
be rewritten in the symmetric form 
\beq\label{symid} 
s_{i}(j)\,
s_k (\Tc_{2i+1}(j))  =  s_k(j)  \, s_{i}(\Tc_{2k+1}(j))  . 
\eeq  
\end{remark} 

\begin{remark} \label{polj} 
Similarly to the remark after Lemma \ref{gcd}, the formula (\ref{cases}) also corresponds to factorizations of 
the corresponding polynomials in $\Z[j]$, according to whether $k\equiv (p-1)/2 \pmod{p}$ or not.
\end{remark} 

It is clear from the factorizations (\ref{cases}) that in the second case, $s_k(\Tc_p(j)) \equiv 0 \pmod{s_k(j)}$ whenever $k$ is not 
congruent to $(p-1)/2 \,\bmod \,p$. It turns out that an explicit expression for   
$s_k(\Tc_p(j)) \,\bmod \,s_k(j)$ can be given  in the first case as well. Before doing so, it is convenient to define 
some more polynomials, which are shifted versions of the airfoil polynomials.

\begin{definition} \label{ppol} 
Polynomials $\Pc_k(z)$  are defined as elements of $\Z[z]$  by 
$$
\Pc_k(z) = s_k(2-z), 
$$ 
or equivalently by  
\beq\label{sinf}  
\Pc_k(4\sin^2\theta) = \frac{\sin\Big((2k+1)\theta\Big)}{\sin\theta}. 
\eeq 
They satisfy the linear recurrence 
\beq\label{prec}
\Pc_{k+1}(z)  +(z-2) \Pc_k(z) +\Pc_{k-1}(z) =0, 
\eeq 
and 
for $k\geq 0$ their expansion in powers of $z$ takes the form 
\beq\label{expan}
\Pc_k(z) = 2k+1 - c_k^{(1)}\, z +c_k^{(2)}\, z^2+\cdots+ (2k+1)(-z)^{k-1} + (-z)^k. 
\eeq 
with 
$$ 
c_k^{(1)}=\frac{k(k+1)(2k+1)}{3!} ,\qquad c_k^{(2)}= \frac{k(k-1)(k+1)(2k^2+5k+2)}{5!}.
$$ 
\end{definition} 

\begin{theorem} \label{rema} 
For all odd integers $p$, 
\beq\label{pform} 
s_k(\Tc_p(j)) = s_i(\Tc_p(j))\,\Pc_{(p-1)/2}\Big((2-j)\, s_k(j)^2\Big) \quad \mathrm{for} \quad k=(p-1)/2+ip,  
\eeq 
and in particular, 
$$ 
 s_k(\Tc_p(j))  \equiv  p\,  s_i(\Tc_p(j)) \pmod{(j-2)\,s_k(j)^2}
$$
holds in that case. 
\end{theorem}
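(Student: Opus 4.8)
### Proof proposal for Theorem \ref{rema}

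\textbf{The plan.} The key is to exploit the symmetric identity (\ref{symid}), which already factors $s_k(\Tc_p(j))$ as $s_i(\Tc_p(j))\cdot s_k(j)^{-1}\cdot s_i(\Tc_{2k+1}(j))\cdot s_i(j)^{-1}$ — but rather than divide, I would look directly at the factor $s_i(\Tc_{2k+1}(j))$ and recognize that it is, up to the normalization $s_i(j)$, a \emph{shifted airfoil polynomial} evaluated at a point determined by $s_k(j)$. Concretely, with $i=(p-1)/2$, I want to show
\[
\frac{s_i\big(\Tc_{2k+1}(j)\big)}{s_i(j)} \;=\; \Pc_i\Big((2-j)\,s_k(j)^2\Big).
\]
Combined with (\ref{symid}) rewritten as $s_k(\Tc_p(j)) = s_i(\Tc_p(j))\cdot s_i(\Tc_{2k+1}(j))/s_i(j)$, this gives (\ref{pform}) immediately. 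The congruence then follows because, by the expansion (\ref{expan}), $\Pc_i(z) \equiv 2i+1 = p \pmod{z}$, and here $z=(2-j)s_k(j)^2$, so $z \equiv 0 \pmod{(j-2)s_k(j)^2}$ (the sign on $j-2$ versus $2-j$ is immaterial for divisibility).

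\textbf{Key steps.} First I would pass to the trigonometric parametrization: write $j = 2\cos\psi$, so that $\Tc_{2k+1}(j) = 2\cos((2k+1)\psi)$ by (\ref{1st}). The crucial computation is to express $2 - \Tc_{2k+1}(j)$ in terms of $s_k(j)$. Using the half-angle identity $2 - 2\cos\alpha = 4\sin^2(\alpha/2)$ with $\alpha = (2k+1)\psi$, we get $2 - \Tc_{2k+1}(j) = 4\sin^2\big((2k+1)\psi/2\big)$. Meanwhile, from (\ref{mainf}) (applied with the appropriate half-angle), or directly from the far-right expression in (\ref{mainf}) written in terms of $\psi$, one has $s_k(j) = \sin\big((2k+1)\psi/2\big)/\sin(\psi/2)$, hence $4\sin^2\big((2k+1)\psi/2\big) = 4\sin^2(\psi/2)\, s_k(j)^2 = (2-j)\,s_k(j)^2$, using $2-j = 2-2\cos\psi = 4\sin^2(\psi/2)$. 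Therefore
\[
2 - \Tc_{2k+1}(j) = (2-j)\, s_k(j)^2.
\]
Second, by Definition \ref{ppol}, $\Pc_i(z) = s_i(2-z)$, so $\Pc_i\big((2-j)s_k(j)^2\big) = s_i\big(2 - (2-j)s_k(j)^2\big) = s_i(\Tc_{2k+1}(j))$. This is exactly the factor appearing in (\ref{symid}), up to the normalizing divisor $s_i(j)$, which is absorbed on the left-hand side. Finally, invoke the expansion (\ref{expan}) to read off $\Pc_i(z) \bmod z$, giving the stated congruence modulo $(j-2)s_k(j)^2$.

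\textbf{Main obstacle.} The one genuine subtlety is bookkeeping with half-angles: the identity (\ref{mainf}) for $s_k(n)$ involves $\theta/2$ where $n = 2\cos\theta$, so to apply it with $n=j$ I must be consistent about whether $j = 2\cos\psi$ or $j = 2\cos(\psi/2)$, and the same care is needed for $\Tc_{2k+1}$. The cleanest route is probably to avoid (\ref{mainf}) for $s_k(j)$ altogether and instead use the expression $s_k(j) = (\la^{k+1}-\la^{-k})/(\la-1)$ from (\ref{expl}) with $\la = e^{\ri\psi}$, $j = \la+\la^{-1}$, then compute $s_k(j)^2(\la-1)(\la^{-1}-1) = s_k(j)^2(2-j)$ and simplify $(\la^{k+1}-\la^{-k})(\la^{-k-1}-\la^{k}) = 2 - (\la^{2k+1}+\la^{-2k-1}) = 2 - \Tc_{2k+1}(j)$ by (\ref{lucas}). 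This reduces the whole argument to one line of algebra in $\la$, sidestepping the half-angle ambiguity entirely. I would also note that the case $i=0$ (i.e.\ $p=1$) is trivial and that $j\ge 3$ is not needed here — the identity (\ref{pform}) is a polynomial identity in $\Z[j]$, valid for all $j$, with the congruence an immediate formal consequence.
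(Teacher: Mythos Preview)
Your proposal is correct and takes essentially the same approach as the paper: both reduce to the identity $2 - \Tc_{2k+1}(j) = (2-j)\,s_k(j)^2$ (the paper via $z = 4\sin^2\big((2k+1)\phi/2\big)$ with $j=2\cos\phi$, you via the $\la$-computation or the half-angle argument), then invoke the definition $\Pc_q(z)=s_q(2-z)$ and read off the congruence from the constant term in (\ref{expan}). One cosmetic slip to fix: you use the symbol $i$ both for the theorem's index in $k=(p-1)/2+ip$ and for $(p-1)/2$, which is why a spurious factor $1/s_i(j)$ creeps into your rewriting of (\ref{symid}); once the notation is disentangled the correct starting point is simply $s_k(\Tc_p(j)) = s_i(\Tc_p(j))\cdot s_{(p-1)/2}(\Tc_{2k+1}(j))$, exactly as in the first case of (\ref{cases}), and your second paragraph (which correctly derives $s_{(p-1)/2}(\Tc_{2k+1}(j))=\Pc_{(p-1)/2}\big((2-j)s_k(j)^2\big)$ with no division) then gives (\ref{pform}) directly.
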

\begin{proof}Upon setting $p=2q+1$, 
by  using (\ref{rt}) together with  the first formula in (\ref{cases}), 
we 
have 
$$ 
\begin{array}{rcl}
{s_k(\Tc_p(j))}/{s_i(\Tc_p(j))} & = & 
{\sin\Big((2q+1)(2k+1)\phi/2\Big)}\bigg/{\sin\Big((2k+1)\phi/2\Big)} \\
& = & \Pc_q(z), \qquad 
\mathrm{where} \qquad z= 4\sin^2\Big((2k+1)\phi/2\Big), 
\end{array} 
$$ 
and   (with the same notation as in the proof of Theorem \ref{factors})  we also have 
$j=2\,\cos\phi$. Comparing the expressions for $z$ and $j$ gives 
$
z =4\sin^2(\phi/2) \, s_k(j)^2 = (2-j)\, s_k(j)^2$, 
which yields the identity (\ref{pform}) in terms of the shifted airfoil polynomial $\Pc_{(p-1)/2}$. The 
 terms displayed in the 
expansion (\ref{expan}) are easily obtained from the recurrence (\ref{prec}), 
or by substituting $n=2-z$ in (\ref{nexp}), 
and  the  leading term 
gives the reduction  of (\ref{pform}) $\bmod \, s_k(j)^2$. 
\end{proof} 

We now turn to the sequences $(\,r_k(n)\,)$ for $n>0$, which are associated with negative values of $n$ via (\ref{rsrel}). 
It turns out that these sequences also admit factorizations for certain  Chebyshev values of $n$. The case of 
$n=\Tc_p(j)$ for odd index $p$ can be inferred immediately from Theorem \ref{factors} together with 
(\ref{rsrel}), since $\Tc_p$ is an odd function 
of its argument  in that case. However, the even case $n=\Tc_2(j)$ does not translate directly to 
the sequences $(\,r_k(n)\,)$, and 
requires a separate treatment. That there should be a significant difference for values of even Chebyshev polynomials 
is also 
apparent from comparison of  the fact that 
$a_7=a_{14}=a_{23}=a_{34}=-1$ in 
(\ref{anseq}), 
but  $\tilde{a}_1=\tilde{a}_2=\tilde{a}_{34}=-1$ in (\ref{atnseq}), while 
$\tilde{a}_{7}$, $\tilde{a}_{14}$ and $\tilde{a}_{23}$ are all positive. 

The following analogue of Theorem \ref{t2} for the sequences  $(\,r_k(n)\,)$ only provides 
a factorization of the terms for a particular subset of the values $n=\Tc_2(j)$ . 

\begin{theorem}\label{rt2} When $n=\Tc_2(j)=j^2-2$ with $j=2(\ell^2-1)$ for integer $\ell\geq 2$, 
the terms of the sequence $(\,r_k(n)\,)$ admit the factorization 
\beq\label{rt2fac} 
r_k(j^2-2) = f_k^+(j)\, f_k^-(j), 
\eeq 
where 
\beq\label{fform} 
f_k^{\pm}(j) = \frac{\ell r_k(j) \pm \delta_k}{\ell\pm 1}\in\Z, 
\qquad \delta_k = (-1)^{\left \lfloor{\frac{k+1}{2}}\right \rfloor}. 
\eeq 
\end{theorem}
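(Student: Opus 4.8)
The plan is to work entirely in terms of the characteristic root $\al$ of $X^2-\sqrt{n+2}\,X+1$, exactly as in the proof of Theorem~\ref{t2}, but now tracking the \emph{third-kind} combination $r_k(n)=L^+_{2k+1}(\sqrt{n+2},1)$ instead of the second-kind one. Set $n=\Tc_2(j)=j^2-2$, so that the root $\la=\al^2$ of the recurrence \eqref{srec} satisfies $\la+\la^{-1}=j$; write $\al+\al^{-1}=\sqrt{j+2}$, and then introduce a further square root via $j+2=2(\ell^2-1)+2=2\ell^2$, i.e. $\sqrt{j+2}=\ell\sqrt 2$. The key algebraic fact is that, by Corollary~\ref{altrdef} (equivalently \eqref{skleh}), $r_k(j^2-2)$ equals $(\al^{2k+1}+\al^{-(2k+1)})/(\al+\al^{-1})$ with $\al$ a root of $X^2-\sqrt{j+2}\,X+1$. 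First I would express the numerator $\al^{2k+1}+\al^{-(2k+1)}$ in terms of the half-angle quantity $\be$ with $\be+\be^{-1}=\sqrt{j+2}=\ell\sqrt2$ — so $\al=\be^{?}$ is the wrong bookkeeping; rather, one wants the substitution making $\al+\al^{-1}=\ell\sqrt2$ split as a product, which is precisely the point where the hypothesis $j=2(\ell^2-1)$ enters.

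The mechanism behind the factorization should be a ``difference/sum of two squares'' trick parallel to \eqref{dos}: because $\sqrt{j+2}=\ell\sqrt 2$ and $\sqrt{j-2}=\sqrt{2\ell^2-4}$, one can hope to write the numerator and denominator of $(\al^{2k+1}+\al^{-(2k+1)})/(\al+\al^{-1})$ as products of linear forms $\ell\,(\text{something}_k)\pm(\text{sign}_k)$ with a common structure, the two factors differing only in the sign of the $\pm1$ in the denominator $\ell\pm1$. Concretely, I expect the numerator to factor as $(\ell\,r_k(j)+\delta_k)(\ell\,r_k(j)-\delta_k)/(\text{const})$ after using $r_k(j)=\Tc_{2k+1}(\sqrt{j+2})/\sqrt{j+2}$ from \eqref{p2f} and the identity $\Tc_{2k+1}(\ell\sqrt2)$ expressed through Chebyshev polynomials evaluated at $\ell$; the sign $\delta_k=(-1)^{\lfloor (k+1)/2\rfloor}$ will emerge as the value of a small-period ($\bmod\,4$) Chebyshev term, reflecting that $\Tc_m(\sqrt2)$ cycles through $2,\sqrt2,0,-\sqrt2,-2,\dots$ So the main steps are: (1) rewrite $r_k(j^2-2)$ via \eqref{p2f} as $\Tc_{2k+1}(\ell\sqrt2)/(\ell\sqrt2)\cdot(\ell\sqrt2/\sqrt{j+2})$ — check this reduces correctly; (2) use the composition $\Tc_{2k+1}(\ell\sqrt2)$ and the fact that $\ell\sqrt2$ is a scaled argument to peel off a factor; (3) read off $f_k^\pm(j)=(\ell\,r_k(j)\pm\delta_k)/(\ell\pm1)$ and verify the product equals $r_k(j^2-2)$; (4) prove $f_k^\pm(j)\in\Z$.

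The integrality in step~(4) is where the real work lies, and I expect it to be the main obstacle. One must show $\ell\pm1$ divides $\ell\,r_k(j)\pm\delta_k$; reducing modulo $\ell\mp(\mp1)$, i.e. setting $\ell\equiv\mp1$, this amounts to showing $r_k(j)\equiv\pm\delta_k\cdot(\pm1)^{-1}\pmod{\ell\pm1}$ for all $k$ — in other words a congruence $r_k(2(\ell^2-1)-2)\equiv\pm\delta_k$ (with appropriate sign) modulo $\ell\pm1$, which should follow by noting $j=2(\ell^2-1)\equiv\text{(small value)}\pmod{\ell\pm1}$ and then using periodicity of $(r_k(n)\bmod m)$ together with the period-$4$ pattern that produces $\delta_k$. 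I would verify the base cases $k=0,1$ directly from \eqref{rinits} (so $r_0(j)=1$, $r_1(j)=j-1=2\ell^2-3$) and then induct using the recurrence $r_{k+2}(j)-j\,r_{k+1}(j)+r_k(j)=0$, checking that $f_k^\pm$ as defined satisfies a clean integer recurrence — most likely $f_{k+2}^\pm(j)-(\pm\text{something})f_{k+1}^\pm(j)+\dots=0$ with integer coefficients, the sign flips in $\delta_k$ being absorbed because the recurrence advances the index by $2$ and $\delta_{k+2}=-\delta_k$. If the single-step sign bookkeeping proves awkward, the fallback is to treat even and odd $k$ as two interleaved subsequences, each satisfying \eqref{lind} with $d=2$, and to establish integrality on each separately; that avoids fighting the $\lfloor(k+1)/2\rfloor$ floor directly.
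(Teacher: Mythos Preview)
Your overall strategy is sound, and in fact your proposed attack on the integrality step (4) via congruences modulo $\ell\pm1$ is \emph{cleaner} than what the paper does. Reducing $j=2(\ell^2-1)\equiv 0\pmod{\ell\pm1}$ makes the recurrence for $r_k(j)$ collapse to $r_{k+2}+r_k\equiv 0$ with $r_0=1$, $r_1=-1$, giving exactly the period-$4$ sign pattern $\delta_k$; so $r_k(j)\equiv\delta_k\pmod{\ell\pm1}$ and both divisibilities follow at once. The paper instead shows that $(\rS^2-j\rS+1)\delta_k=j\,\delta_{k-1}$, so that each $f_k^\pm$ satisfies the \emph{inhomogeneous} recurrence $f_{k+2}^\pm-j f_{k+1}^\pm+f_k^\pm=2(\pm\ell-1)\delta_{k-1}$ with integer right-hand side, and then checks the integer initial values $f_{-1}^\pm=f_0^\pm=1$. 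Your expectation in (4b) of a clean homogeneous order-$2$ recurrence is slightly off: because $\delta_k$ has characteristic roots $\pm i$, the minimal homogeneous recurrence for $f_k^\pm$ is order $4$, not $2$; the paper's inhomogeneous form is the tidy way to package this.

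Where your write-up has a real gap is the algebraic verification in steps (1)--(3). What you wrote in step (1) is the formula for $r_k(j)$, not $r_k(j^2-2)$; and the vague ``peel off a factor'' in step (2) is not a plan. The paper bypasses all the half-angle bookkeeping you are wrestling with: simply multiply the two proposed factors directly, using $\delta_k^2=1$, to get
\[
f_k^+(j)\,f_k^-(j)=\frac{\ell^2 r_k(j)^2-1}{\ell^2-1}=\frac{(j+2)\,r_k(j)^2-2}{j},
\]
and then check that the right-hand side equals $r_k(j^2-2)$. This last step is one line with the explicit formula $r_k(j)=(\al^{(2k+1)/2}+\al^{-(2k+1)/2})/(\al^{1/2}+\al^{-1/2})$, or equivalently via \eqref{p2f} and the identity $\Tc_m(x)^2-2=\Tc_{2m}(x)$ together with $\Tc_2(\ell\sqrt2)=2\ell^2-2=j$. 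No search for a sum-of-two-squares style splitting of the numerator is needed: the factorization is handed to you in the statement, and you only have to confirm it.
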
 
\begin{proof} Since $\delta_k^2=1$ and
 $r_k(j) =(j+2)^{-1/2} \left(\al^{(2k+1)/2}+ \al^{-(2k+1)/2}\right)$, using 
$\al^{1/2}+\al^{-1/2}=\sqrt{j+2}$ as in (\ref{dos}), 
it follows that 
$$ 
\begin{array}{rcl}
 f_k^+(j)\, f_k^-(j) &= & 
{(\ell^2 r_k(j)^2 -1)}/{(\ell^2-1)} =  j^{-1}\Big((j+2)r_k(j)^2-2\Big) \\ 
& = & 
{\Big( \left(\al^{(2k+1)/2}+ \al^{-(2k+1)/2}\right)^2-2\Big)}/{(\al+\al^{-1})}, 
\end{array} 
$$ 
which coincides with the formula (\ref{ralform}) for $r_k(n)$ with $n=j^2-2=4\ell^4-8\ell^2+2$ in this case. 
To see that each factor $f_k^\pm (j)$ is an integer for all $k$, note that 
$(\rS^2-j\rS+1)r_k(j)=0$,  and checking the sequence of signs $+1,-1,-1,+1$ for 
$k=0,1,2,3$ shows that  $(\rS^2-j\rS+1)\delta_k =j \delta_{k-1}$. Hence 
the factors in (\ref{t2fac}) each satisfy an inhomogeneous linear 
recurrence of second order, that is 
\beq\label{finh}
f_{k+2}^\pm(j) - j \, f_{k+1}^\pm(j) +f_k^\pm(j) = 2(\pm\ell - 1)\delta_{k-1}. 
\eeq 
From (\ref{fform}) it can be seen that 
$$
f_{-1}^{\pm}(j) = f_{0}^{\pm}(j) =1
$$ 
provide integer initial values for (\ref{finh}) in each case, 
so these two sequences consist entirely of integers. 
\end{proof} 

\begin{example} 
For $\ell=2$, the above result gives $j=\Tc_2(2\sqrt{2})=6$ and $n=\Tc_2(6)=34$ ,
with the sequence $(\,r_k(34)\,)=(\,f_k^+(6)\,f_k^-(6)\,)$ beginning 
\beq\label{r34} 
1,33,1121,38081,1293633,\ldots, 
\eeq 
where the factors are 
\beq\label{fpm6} 
(\,f_k^+(6)\,): \,\, 1,3,19,113,657,\ldots, \quad 
(\,f_k^-(6)\,): \,\, 1,11,59,337,1969,\ldots,
\eeq 
and these satisfy the inhomogeneous recurrences 
$$ \begin{array}{rcl} 
f_{k+2}^+(6) - 6 \, f_{k+1}^+(6) +f_k^+(6) 
& = & 2(-1)^{\left \lfloor{\frac{k}{2}}\right \rfloor} ,  \\
f_{k+2}^-(6) - 6 \, f_{k+1}^-(6) +f_k^-(6) 
& = & 6(-1)^{\left \lfloor{\frac{k}{2}}\right \rfloor+1} 
.
\end{array}
$$ 
\end{example} 

For the case where $n=\Tc_p(j)$ for $p$ odd, the formula (\ref{poddf}) can be applied, together with 
(\ref{rsrel}), to yield the factorization 
\beq\label{rfactor} 
r_k (\Tc_p(j)) = \tilde{R}_k(j) \, r_k(j), 
\eeq 
where 
\beq\label{Rform} 
 \tilde{R}_k(j) =R_{k}(-j) = \frac{r_{(p-1)/2}(\Tc_{2k+1}(j))}{r_{(p-1)/2}(j)} 
\eeq 
satisfies the same recurrence (\ref{podd}) as $R_k(j)$. 

\begin{example}
When $n=\Tc_3(3)=18$, the sequence $(\,r_k(18)\,)$ begins 
\beq\label{r18} 
1,17,305,5473,98209,1762289,31622993,567451585,\ldots, 
\eeq 
so that $\tilde{a}_{18}=1$ since $r_1(18)=17$ is prime. 
By adapting Theorem \ref{factors}, 
the terms can be factored as 
$$ 
r_k(18)=\tilde{R}_k(3)\,r_k(3), 
$$ 
where the sequence $(\,r_k(3)\,)$ begins with 
\beq\label{r3}
1,2,5,13,34,89,233,610,\ldots, 
\eeq 
and $(\,\tilde{R}_k(3)\,)$ is a sequence 
of rational numbers, starting with 
$$ 
1,\frac{17}{2},61,421, \frac{5777}{2}, 19801, 135721, \frac{1860497}{2}, \ldots, 
$$
which satisfies the third order recurrence 
$$ 
\tilde{R}_{k+3}(3)- 8 \, \Big(\tilde{R}_{k+2}(3) - \tilde{R}_{k+1}(3)\Big) - \tilde{R}_{k}(3)=0. 
$$ 
The prefactors $\tilde{R}_k(3)$ are integers whenever $k\equiv 0$ or $2\pmod{3}$, 
so $r_k(3)$ divides $r_k(18)$  for all such $k$, while 
the terms of the trisection $(\, r_{3k+1}(18)\,)$ are all divisible by  $r_1(18)$, which is the only 
prime in the sequence $(\,r_k(18)\,)_{k\geq 0}$.
\end{example}

Having described the situation for even Chebyshev values, and given 
the above example of an odd Chebyshev value, the analogue of 
Theorem \ref{maint} for $(\,r_k(n)\,)$ can  now be stated. 

\begin{theorem}\label{tmaint}
Let $(\tilde{a}_n)_{n\geq 1}$ be the sequence  specified by Definition \ref{atdef}. If 
$n = \Tc_2(j)$ where $j=2(\ell^2-1)$ for some $\ell\geq 2$, then $\tilde{a}_n = -1$. Furthermore, if 
$n = \Tc_p(j)$ for some $j\geq 3$ with $p$ an odd prime, then either 
$r_{(p-1)/2}(n)$ is not prime and $\tilde{a}_n = -1$, or $r_{(p-1)/2}(n)$ is 
the only prime in the sequence $(\,r_k(n)\,)_{k\geq 0}$ and  
$\tilde{a}_n = (p-1)/2$. 
\end{theorem}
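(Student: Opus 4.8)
The plan is to mirror the proof of Theorem~\ref{maint}, handling the two cases separately. For the first assertion, when $n = \Tc_2(j)$ with $j = 2(\ell^2-1)$, Theorem~\ref{rt2} already provides the factorization $r_k(n) = f_k^+(j)\, f_k^-(j)$ into two integers, so it is enough to show that both factors exceed $1$ for every $k \geq 1$. First I would record the elementary fact that, for $j \geq 3$, the sequence $(\,r_k(j)\,)_{k\geq 0}$ is strictly increasing: this is a one-line induction from $r_{k+1}(j) = j\,r_k(j) - r_{k-1}(j) > (j-1)\,r_k(j) \geq r_k(j)$, with base case $r_1(j) = j-1 > 1 = r_0(j)$. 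Since $j = 2(\ell^2-1) \geq 6$, it follows that $r_k(j) \geq r_1(j) = 2\ell^2-3$ for all $k \geq 1$, and hence, using $|\delta_k| = 1$, that $f_k^\pm(j) \geq \big(\ell(2\ell^2-3)-1\big)/(\ell\pm 1) > 1$ for every $\ell \geq 2$ and $k \geq 1$ (the tightest case, $f_1^+$ at $\ell = 2$, already gives $3$). Thus $r_k(n)$ is composite for all $k \geq 1$, so $\tilde a_n = -1$.

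For the second assertion, with $n = \Tc_p(j)$, $p$ an odd prime and $j \geq 3$, the starting point is the factorization $r_k(\Tc_p(j)) = \tilde R_k(j)\, r_k(j)$ from~(\ref{rfactor}), in which $\tilde R_k(j) = r_{(p-1)/2}(\Tc_{2k+1}(j))/r_{(p-1)/2}(j)$ is a priori only a positive rational. As in Theorem~\ref{maint}, I would split on the residue of $k$ modulo $p$ and rewrite $r_k(\Tc_p(j))$ as a product of two positive integers in each case. This requires two $r$-analogues of facts established earlier for $s_k$: (i)~the factorization $r_k(n) = r_{(a-1)/2}(\Tc_b(n))\, r_{(b-1)/2}(n)$ whenever $2k+1 = ab$ with $a,b$ odd, which follows by the same argument as in the proof of Lemma~\ref{2kp1} upon using~(\ref{tid}) and~(\ref{p2f}) in place of~(\ref{uid}) and~(\ref{mainf}); and (ii)~the gcd statement $\gcd\big(r_j(n),r_k(n)\big) = r_m(n)$ with $2m+1 = \gcd(2j+1,2k+1)$, the $r$-analogue of Lemma~\ref{gcd}. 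Since $r_k(n) = L^+_{2k+1}(\sqrt{n+2},1)$ by Proposition~\ref{lehch}, (ii)~is again a special case of the gcd formula for Lehmer numbers (all indices being odd); alternatively one can prove it directly by repeating the arguments of Lemmas~\ref{resper}--\ref{gcd}, the only changes being that the shift identity~(\ref{psh}) becomes $r_{k+p}(n) + r_k(n) = \Tc_{k+(p+1)/2}(n)\, r_{(p-1)/2}(n)$ for odd $p$ (so the residues $r_k(n)\bmod r_{(p-1)/2}(n)$ now have period dividing $2p$, with $r_k(n)\equiv 0$ still exactly when $k\equiv(p-1)/2\pmod p$), the symmetry~(\ref{sym}) becomes $r_k(n) = r_{-k-1}(n)$, and the monotonicity input is the one above. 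I expect establishing~(ii) cleanly to be the main obstacle, since the ``companion'' ($L^+$-type) sequences have somewhat more delicate divisibility than the $L^-$-type ones, and the sign alternation in the shift identity must be tracked with care.

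Granting (i) and (ii), the remainder runs parallel to the proof of Theorem~\ref{maint}. From~(\ref{rfactor}) one has $r_{(p-1)/2}(j)\, r_k(\Tc_p(j)) = r_k(j)\, r_{(p-1)/2}(\Tc_{2k+1}(j))$; since the left-hand side is an integer and, by~(ii), $\gcd\big(r_k(j),\,r_{(p-1)/2}(j)\big) = 1$ whenever $p \nmid 2k+1$, it follows that $\tilde R_k(j) \in \Z$ for every $k \not\equiv (p-1)/2 \pmod p$. When $k \equiv (p-1)/2 \pmod p$, write $k = (p-1)/2 + ip$; then (i) gives $r_k(j) = r_i(\Tc_p(j))\, r_{(p-1)/2}(j)$, and substituting into~(\ref{rfactor}) yields $r_k(\Tc_p(j)) = r_i(\Tc_p(j))\, r_{(p-1)/2}(\Tc_{2k+1}(j))$. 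Hence in every case $r_k(\Tc_p(j))$ is a product of two positive integers, and it remains only to check that both are $>1$ except when $k = (p-1)/2$. This uses the monotonicity of $r_q(\,\cdot\,)$ in both its index and its argument (quick inductions, as for Proposition~\ref{inc}), together with $\Tc_{2k+1}(j) > j$ and $\Tc_p(j) > 2$ for $j \geq 3$, $k \geq 1$: when $k \not\equiv (p-1)/2 \pmod p$ one has $r_k(j) \geq r_1(j) = j-1 > 1$ and $\tilde R_k(j) = r_k(\Tc_p(j))/r_k(j) > 1$; when $k \equiv (p-1)/2 \pmod p$ with $i \geq 1$ both $r_i(\Tc_p(j)) \geq \Tc_p(j)-1 > 1$ and $r_{(p-1)/2}(\Tc_{2k+1}(j)) \geq \Tc_{2k+1}(j)-1 > 1$; while for $i = 0$, i.e.\ $k = (p-1)/2$, the first factor is $r_0(\Tc_p(j)) = 1$ and the term equals $r_{(p-1)/2}(n)$ itself. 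Therefore $r_{(p-1)/2}(n)$ is the only term with $k \geq 1$ that can be prime, which gives $\tilde a_n = (p-1)/2$ if it is prime and $\tilde a_n = -1$ otherwise.
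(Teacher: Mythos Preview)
Your proposal is correct and follows essentially the same route as the paper's proof: use Theorem~\ref{rt2} plus monotonicity for the even case, and for the odd case split on $k\bmod p$, invoke the $r$-analogues of Lemmas~\ref{resper} and~\ref{star} to turn~(\ref{rfactor}) into a product of two integers (your~(\ref{rifac}) when $k\equiv(p-1)/2$), and finish with monotonicity. You are in fact more explicit than the paper in places---you write down the shifted identity $r_{k+p}(n)+r_k(n)=\Tc_{k+(p+1)/2}(n)\,r_{(p-1)/2}(n)$ and the multiplicative identity $r_k(n)=r_{(a-1)/2}(\Tc_b(n))\,r_{(b-1)/2}(n)$, whereas the paper simply asserts that the relevant analogues hold; the only spot where the paper gives more is the monotonicity of $r_k(j)$ in the argument $j$, which it obtains via the $\cosh$ formula~(\ref{p2f}) and a derivative computation rather than an induction.
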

\begin{proof}
For the case  $n = \Tc_2(j)$, $j=2(\ell^2-1)$ with $\ell\geq 2$, note that each factor in 
(\ref{rt2fac}) is an integer, and $r_0(\Tc_2(j))=f_0^\pm (j)=1$. Now as noted in the proof of Lemma \ref{inc}, 
for fixed argument the sequence of Chebyshev polynomials of the first kind is strictly increasing  
with the index $k\geq 0$, so from (\ref{p2f}) it follows that $(\,r_k(j)\,)_{k\geq 0}$ is strictly increasing. 
Thus, from their explicit expressions in (\ref{fform}),  both sequences $(\,f_k^\pm (j)\,)$ are 
 strictly increasing as well. Hence, for these values of $j$, $r_k(\Tc_2(j))$ is composite for $k\geq 1$. 
Hence there are no primes in the sequence $(\,r_k(n)\,)_{k\geq 0}$ for any of these even Chebyshev values of $n$. 

When $n = \Tc_p(j)$, $j\geq 2$ with $p$ an odd prime, there is the factorization (\ref{rfactor}), with 
$\tilde{R}_k(j) \in\Q$ given by (\ref{Rform}). Just as for the sequences $(\,s_k(n)\,)$, it is necessary 
to consider whether $k\equiv (p-1)/2 \pmod{p}$ or not. One can show 
that the analogue of Lemma \ref{resper} holds for the sequences $(\,r_k(n)\,)$, $n\geq 3$, 
so that when $k\equiv (p-1)/2 \pmod{p}$ the denominator of $\tilde{R}_k(j)$ divides $s_k(j)$; or, 
by using (\ref{rsrel}) together with (\ref{facid}), one can write the explicit factorization 
\beq\label{rifac}
r_k(\Tc_p(j))= r_i(\Tc_p(j))\, r_{(p-1)/2}(\Tc_{2k+1}(j)) \qquad \mathrm{for} \quad k=(p-1)/2+ip , 
\eeq 
where both integer factors above are greater than 1 for $i>0$. 
On the other hand, for the case $k\not\equiv (p-1)/2 \pmod{p}$, one can show that  
the analogue of Lemma \ref{star} also holds for the sequences $(\,r_k(n)\,)$, so 
in that case the denominator in (\ref{Rform}) must divide the numerator; hence 
$\tilde{R}_k(j) \in\Z$ and both factors in  (\ref{rfactor}) are integers. Then, similarly to 
the proof of Lemma \ref{inc}, setting $\sqrt{j+2}=2\,\mathrm{cosh}\tau$ for real $j\geq 2$  in (\ref{p2f}) gives 
$$r_k(j)= \frac{\mathrm{cosh}((2k+1)\tau)}{\mathrm{cosh}\tau}
\implies \frac{\rd}{\rd\tau}\, r_k(j)=\frac{2k\,\mathrm{sinh}((2k+1)\tau)}{\mathrm{cosh}\tau} + 
\frac{\mathrm{sinh}(2k\tau)}{\mathrm{cosh}^2\tau}>0
$$ 
for $k>0$, $\tau>0$, implying that $r_k(j)$ is a strictly increasing function of $j$ for $j\geq 2$.  
Hence $\tilde{R}_k(j)>1$ for $j>2$, so when  $0<k\not\equiv (p-1)/2 \pmod{p}$, both integer factors in  (\ref{rfactor}) are 
greater than 1. 
Thus $r_{(p-1)/2}(\Tc_p(j))$ is the only term that can be prime. 
\end{proof} 

\section{Appearance of primes for non-Chebyshev values}  

Theorem \ref{maint} says that for the values $n=\Tc_p(j)$ with prime $p$ and integer $j\geq 3$, the sequence $(\,s_k(n)\,)_{k\geq 0}$ contains at most one prime term, and this can only occur if $p$ is an odd prime, in which case $s_{(p-1)/2}(\Tc_p(j))$ is the only term that may be prime. It seems likely that these cases are exceptional, and for 
non-Chebyshev values of $n$ one would expect 
infinitely many prime terms, in line with general heuristic arguments for linear recurrence sequences \cite{epsw}.

\begin{figure} \centering
\includegraphics[width=10cm,height=10cm,keepaspectratio]{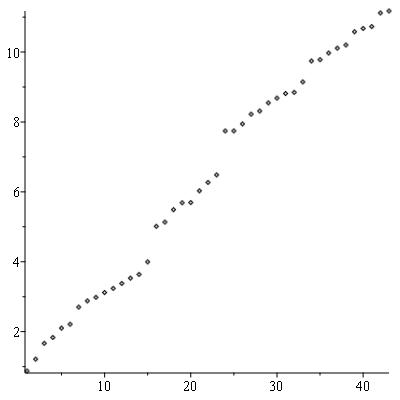}
\caption{\small{Plot of $\log\log s_{k_N}(n)$ against $N$ for the first 43 primes in the sequence for $n=3$.}}
\label{nis3}
\end{figure} 

\begin{conjecture} \label{sprimes}
Let $n>1$ be a positive integer. The sequence $(\,s_k(n)\,)_{k\geq 0}$ contains infinitely many primes if and only if $n\neq \Tc_p(j)$ for some prime $p$ and some integer $j\geq 3$. 
\end{conjecture}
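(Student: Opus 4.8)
The statement is an equivalence whose two directions have entirely different status, and the plan is to deal with them separately. The \emph{only if} direction is already a theorem. If $(\,s_k(n)\,)_{k\geq 0}$ contains infinitely many primes, then $n$ cannot equal $\Tc_p(j)$ with $p$ prime and $j\geq 3$, since this is precisely the contrapositive of Theorem \ref{maint}, which shows that at every such value $n$ the sequence has at most one prime term. The only point needing a word is the reduction to prime $p$ (Theorem \ref{maint} is stated for prime $p$, but the forbidden set in the conjecture is the same set): if $n=\Tc_m(j)$ with $m$ composite and $j\geq 3$, pick a prime $p\mid m$, write $m=pm'$, and use the composition identity (\ref{tid}) to get $n=\Tc_p\big(\Tc_{m'}(j)\big)$, where $\Tc_{m'}(j)\geq\Tc_{m'}(3)\geq 3$ because $\Tc_{m'}$ is increasing on $[2,\infty)$ (cf.\ the proof of Proposition \ref{inc}). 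Thus nothing further is required for this implication.

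The \emph{if} direction --- that every other $n>1$ yields infinitely many primes --- is the genuinely conjectural content, and it is expected to lie beyond current technique, for the same reason that the infinitude of Mersenne primes is open: no unconditional method is known for proving that a non-degenerate binary linear recurrence contains infinitely many prime terms. What one can realistically do is (a) isolate the conceivable obstructions and check the exceptional set against them, and (b) supply a quantitative heuristic. For (a): by Lemma \ref{2kp1} the only candidates for prime terms are $s_{(p-1)/2}(n)$ with $p$ an odd prime, so having only finitely many primes would force either an algebraic factorization of $s_{(p-1)/2}(n)$ valid for all large $p$ (an Aurifeuillian-type identity), or a covering-congruence obstruction in which a fixed finite set of primes divides $s_{(p-1)/2}(n)$ cofinitely. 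Theorem \ref{factors} exhibits exactly the algebraic factorizations coming from Chebyshev values, and a key substep would be to argue --- through the structure of polynomial factorizations of the Lehmer polynomials $L^-_{2k+1}(\sqrt{n+2},1)\in\Z[n]$, cf.\ Remark \ref{polj} --- that $n=\Tc_p(j)$ accounts for all of them. For the covering-congruence possibility one would invoke the rigidity of the initial data (here $s_0=1$, $s_1=n+1$ are fixed, unlike in the composite-recurrence constructions of Somer \cite{somer} and Dubickas et al.\ \cite{composite}, where the two initial values may be chosen freely) together with the strong structural constraint following from Corollary \ref{papp}: since $p$ is prime, Lemma \ref{gcd} gives $\gcd\big(s_{(p-1)/2}(n),s_j(n)\big)=1$ for $0\le j<(p-1)/2$, so every prime divisor of $s_{(p-1)/2}(n)$ coprime to $n^2-4$ is primitive and hence $\equiv\pm1\pmod{2p}$, which severely limits the congruences $p\equiv\pi(q)\pmod{2\pi(q)}$ available for assembling a covering system.

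The heuristic for (b) is modelled on the Wagstaff analysis of the density of Mersenne primes. Since $\log s_{(p-1)/2}(n)=\tfrac{p-1}{2}\log\la+O(1)$ by Proposition \ref{inc}, the ``probability'' that $s_{(p-1)/2}(n)$ is prime should be $\asymp\big(\log s_{(p-1)/2}(n)\big)^{-1}\asymp(p\log\la)^{-1}$, corrected by an Euler-product factor encoding the local conditions of Lemma \ref{pdiv}: only primes $q$ with odd period $\pi(q)$ --- and, for odd $q$, with $q\equiv\pm1\pmod{2p}$ --- can divide $s_{(p-1)/2}(n)$, exactly as $q\equiv\pm1\pmod{2p}$ governs divisors of $2^p-1$. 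Summing over primes $p\leq x$ and using Mertens' theorem $\sum_{p\leq x}1/p\sim\log\log x$, one is led to predict that the number of primes among $\{\,s_{(p-1)/2}(n):p\leq x\,\}$ grows like $c_n\log\log x\to\infty$ for an explicit constant $c_n$, in exact analogy with the Wagstaff conjecture; equivalently, if $k_N$ is the index of the $N$th prime term, then $\log\log s_{k_N}(n)$ should grow roughly linearly in $N$, which is what Figure \ref{nis3} displays for $n=3$. (The degenerate value $n=2$ is the one case where infinitude is unconditional, since $s_k(2)=2k+1$ runs through the odd numbers and Dirichlet's theorem applies.) The main obstacle is therefore not the heuristic --- that is routine once the local densities are assembled --- but the passage from heuristic to theorem; short of a breakthrough on primes in recurrence sequences, the \emph{if} direction must remain conjectural, supported by the partial results above, the density model, and the numerical evidence collected in Figure \ref{nis3} and the appendices.
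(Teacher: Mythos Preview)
Your proposal is correct in its overall assessment and matches the paper's treatment: the statement is a conjecture, not a theorem, and the paper offers no proof. You correctly identify that the \emph{only if} direction is a consequence of Theorem~\ref{maint} (the paper makes this connection implicitly in the sentence preceding the conjecture), and that the \emph{if} direction is genuinely open, supported only by the Wagstaff-type heuristic developed in Section~6 and the numerical data in Appendix~A.

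Where you go somewhat beyond the paper is in your discussion of obstructions for the \emph{if} direction: the paper does not explicitly analyse algebraic (Aurifeuillian-style) factorizations versus covering-congruence obstructions, nor does it invoke Lemma~\ref{gcd} and Corollary~\ref{papp} to constrain the latter. This is a reasonable and informative elaboration, but it is your own addition rather than something to be compared against. Your heuristic derivation is essentially the same as the paper's, arriving at the same Mertens-based density prediction; the paper states this more formally as a second conjecture (equation~(\ref{conjf})) with an explicit constant $C=e^{-\gamma}\log\sqrt{\la}$. One small redundancy: your paragraph on reducing composite $m$ to prime $p$ via (\ref{tid}) is unnecessary, since both Theorem~\ref{maint} and Conjecture~\ref{sprimes} are already stated for prime $p$, so the forbidden sets coincide by definition.
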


In order to consider the distribution of primes in the sequence  $(\,s_k(n)\,)$, it is helpful to introduce some notation. 
Define a subsequence $(k_N)_{N\geq 1}$ of the positive integers by requiring that 
$$ 
s_{k_N}(n) = N\mathrm{th}\,\,\mathrm{prime} \,\,\mathrm{term} \,\, \mathrm{in} \,\,(\,s_k(n)\,)_{k\geq 0}, 
$$ 
and, for fixed $n$,  let 
$$
{\cal S}_k(n) =
 \{    \,\mathrm{prime}\, q \mid q<4k+1\,
\}  \cup
 \{    \,\mathrm{prime}\, q  \mid q| \Pi_{k-1}(n)\,
\}  
 , $$
where $\Pi_{k-1}(n)$ is the product in (\ref{pik}). 

\begin{conjecture} 
If $n\geq 3$ and 
 $n\neq \Tc_p(j)$ for some prime $p$ and some integer $j\geq 3$, 
then, as $N\to\infty$, 
\beq\label{conjf}
\log\log s_{k_N}(n) \sim C\, N, \qquad 
with \quad  C= e^{-\gamma} \, \log \sqrt{\la} , 
\eeq 
where $\la = \frac{n+\sqrt{n^2-4}}{2}$, and  $\gamma$ is 
 the Euler--Mascheroni constant. 
\end{conjecture}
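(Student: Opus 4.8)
Since the final statement is a conjecture rather than a theorem, what I would aim to produce is a heuristic derivation in the spirit of the Lenstra--Pomerance--Wagstaff heuristic for Mersenne primes, together with an explanation of why the non-Chebyshev hypothesis removes the only known systematic obstruction. The plan has four steps.

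\emph{Reduction and size of the candidate terms.} By Lemma~\ref{2kp1}, the only terms that can be prime are those of the form $s_{(p-1)/2}(n)=L^-_p(\sqrt{n+2},1)$ with $p$ an odd prime, so I would track primality as $p$ runs over the primes. By Proposition~\ref{inc} (equivalently Corollary~\ref{floorf}), $\log s_{(p-1)/2}(n)=\tfrac{p-1}{2}\log\la+O(1)\sim p\log\sqrt{\la}$ as $p\to\infty$.

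\emph{Arithmetic of the divisors.} Combining Corollary~\ref{papp} with Lemma~\ref{gcd}, every prime factor $q$ of $s_{(p-1)/2}(n)$ has period $\pi(q)\mid 2k+1=p$; since $p$ is prime and $\pi(q)>1$, necessarily $\pi(q)=p$, so by Corollary~\ref{papp} (and Corollary~\ref{rp}) one has $q=2ap\pm1$ for some integer $a\ge1$. Hence $s_{(p-1)/2}(n)$ has no prime factor below $2p-1$, and all of its odd prime factors lie in the two residue classes $\pm1\pmod{2p}$ --- precisely the kind of structure enjoyed by $2^p-1$.

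\emph{Heuristic probability and summation.} Treating $s_{(p-1)/2}(n)$ as a ``random'' integer of its size subject to the constraint of the previous step, the usual sieve heuristic together with Mertens' theorem $\prod_{q\le y}(1-1/q)^{-1}\sim e^{\gam}\log y$ gives
\beq\label{heur}
\mathrm{Prob}\big(s_{(p-1)/2}(n)\ \text{prime}\big)\;\approx\;\frac{1}{\log s_{(p-1)/2}(n)}\prod_{q\le 2p}\Big(1-\tfrac1q\Big)^{-1}\;\sim\;\frac{e^{\gam}\log(2p)}{p\,\log\sqrt{\la}}.
\eeq
Summing over primes $p\le X$ and using $\sum_{p\le X}\tfrac{\log p}{p}\sim\log X$, the expected number of prime terms $s_k(n)$ with $2k+1\le X$ is
\beq\label{count}
\sum_{p\le X}\frac{e^{\gam}\log(2p)}{p\,\log\sqrt{\la}}\;\sim\;\frac{e^{\gam}}{\log\sqrt{\la}}\,\log X .
\eeq
If $s_{k_N}(n)$ is the $N$th prime term and $p_N=2k_N+1$, then \eqref{count} with $X=p_N$ gives $N\sim\frac{e^{\gam}}{\log\sqrt{\la}}\log p_N$, hence $\log p_N\sim e^{-\gam}(\log\sqrt{\la})\,N$; since $\log\log s_{k_N}(n)=\log p_N+O(1)\sim\log p_N$ by the first step, this yields $\log\log s_{k_N}(n)\sim CN$ with $C=e^{-\gam}\log\sqrt{\la}$, as claimed. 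The estimate \eqref{count} is the exact analogue, for these sequences, of the Lenstra--Pomerance--Wagstaff estimate $\sim\frac{e^{\gam}}{\log2}\log\log x$ for the count of Mersenne primes below $x$, which is the source of the ``analogous behaviour'' asserted in the introduction.

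\emph{The main obstacle.} As in the Mersenne case, no unconditional proof is in reach --- even the infinitude of prime terms (Conjecture~\ref{sprimes}) is open, so \eqref{count} cannot be verified, only supported by numerics (Figure~\ref{nis3}). Within the heuristic, the delicate point is the implicit claim in \eqref{heur} that the divisibility constraints of the second step (factors $\ge2p-1$, confined to two residue classes mod $2p$) reproduce exactly the Mertens factor $e^{\gam}$ with no spurious extra constant: one must argue, Wagstaff-style, that averaging the criterion ``$q\mid s_{(p-1)/2}(n)\iff\pi(q)=p$'' over primes $q$ in those classes returns the naive local densities $1/q$, so that the constant surviving the outer $\log\log$ is precisely $C$. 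Finally, the non-Chebyshev hypothesis $n\ne\Tc_p(j)$ is exactly what lets the heuristic stand: for $n=\Tc_p(j)$ the prefactor $R_k(j)$ of Theorem~\ref{factors} exceeds $1$ for all but at most one $k$, so the ``probability'' in \eqref{heur} is effectively zero and the asymptotic fails, consistently with Theorem~\ref{maint}.
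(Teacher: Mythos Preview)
Your heuristic derivation is correct and follows essentially the same Wagstaff-style approach as the paper: reduce via Lemma~\ref{2kp1} to indices $k=(p-1)/2$ with $p$ prime, use Corollary~\ref{papp}/\ref{rp} to show that all prime factors of $s_k(n)$ are $\equiv\pm1\pmod{2p}$ and hence at least $2p-1$, apply Mertens' theorem to convert this into a conditional primality probability, and sum to obtain the expected count $\sim C^{-1}\log X$. The only organizational difference is that you sum directly over primes $p\le X$ via $\sum_{p\le X}\tfrac{\log p}{p}\sim\log X$, whereas the paper sums over all integers $k\le x$ with the weight $\mathrm{Prob}(2k+1\ \mathrm{prime})\sim 2/\log(2k+1)$ and then uses $\sum_{k\le x}\tfrac{1}{k}\sim\log x$; the two computations are equivalent and lead to the same constant $C=e^{-\gamma}\log\sqrt{\la}$.
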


The above assertion  is analogous to a conjecture of Wagstaff regarding Mersenne primes \cite{wagstaff}. If $M_N$ is the  $N$th prime of the form $2^k-1$, then the heuristic arguments of Wagstaff suggest that 
$$ 
\log\log M_N \sim C' \, N, \qquad \mathrm{with} \quad C'=e^{-\gamma}\log 2. 
$$ 
A very clear exposition of the statistical properties of Mersenne primes, with many plots,  
can be found on Caldwell's website \cite{caldwell}.\footnote{More specifically, see the page 
\url{https://primes.utm.edu/notes/faq/NextMersenne.html}}
When $n$ is a non-Chebyshev value, a heuristic derivation of the corresponding asymptotics of primes 
in the sequences $(\,s_k(n)\,)$ can be obtained in a similar way, as we now describe.

\begin{figure} \centering
\includegraphics[width=10cm,height=10cm,keepaspectratio]{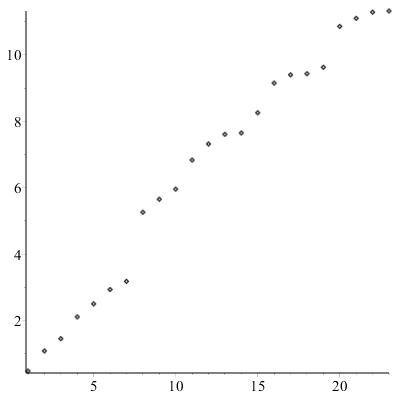}
\caption{\small{Plot of $\log\log s_{k_N}(n)$ against $N$ for the first 23 primes in the sequence for $n=4$.}}
\label{nis4}
\end{figure}

By Lemma \ref{2kp1}, if $s_k(n)$ is prime then $2k+1$ is prime, and by Lemma  \ref{star}, 
$s_k(n)$ is then coprime to $s_j(n)$ for all $1\leq j\leq k-1$, and for large enough 
$k$ it is also coprime to the discriminant $n^2-4$, hence it is coprime to $\Pi_{k-1}(n)$. 
On the other hand, by Corollary \ref{rp}, if $p=2k+1$ is prime then $s_k(n)$ is coprime to 
all primes $q<4k+1$: such primes are either primitive divisors of lower terms $s_j(n)$ with 
$j<k$, or they do not appear as divisors of the sequence at all. Thus no prime $q\in {\cal S}_k(n)$ can 
be a factor of $s_k(n)$ when   $2k+1$ is prime.
Then from the prime number theorem, for $k$ large, 
$$ 
\mathrm{ Prob}(2k+1 \,\mathrm{prime})\sim 2/\log(2k+1);$$ and, given that $2k+1$ is prime, the probability that $s_k(n)$ is prime is estimated by 
dividing by the probability that $s_k(n)$ is indivisible by primes $q$ that either divide lower terms in the sequence, 
or are forbidden from being divisors of $s_k(n)$ due to Corollary \ref{rp}, 
to yield 
$$ 
\begin{array}{rcl}
\mathrm{ Prob}(s_k(n) \,\mathrm{prime}|2k+1\,\mathrm{prime} ) & \sim & \frac{1}{\log s_k(n)} \,  
 \underset{q\in{\cal S}_{k}(n)}{\prod}\left(1-\frac{1}{q}\right)^{-1} \\
& \sim &   \frac{1}{k\log \la} \,  
 \underset{q\in{\cal S}_{k}(n)}{\prod}\left(1-\frac{1}{q}\right)^{-1},
\end{array}  
$$ 
where the latter expression comes from the asymptotics in Proposition \ref{inc}.  
By multiplying these two probabilities together, 
and using the limit 
$$  
\lim_{k\to\infty} \log k  \underset{q\leq k}{ \underset{q\,\mathrm{prime}}{\prod}}\left(1-\frac{1}{q}\right)=e^{-\gam}, 
$$ 
which is one of Mertens' theorems (see section 22.8 in \cite{hw}), 
gives 
$$
\lim_{k\to\infty} \log (2k+1)
  \underset{q< 4k+1}{ \underset{ q\, \mathrm{prime}}{\prod}}\left(1-\frac{1}{q}\right)\, \underset{q\geq 4k+1}{ \underset{\mathrm{prime}\, q\in{\cal S}_{k}(n)}{\prod}}\left(1-\frac{1}{q}\right)=e^{-\gam}
$$
which  produces the estimate 
$$ \left| \{\mathrm{prime}\,\,\mathrm{terms} \,\, s_k(n) \,\,\mathrm{for}\,\, 0<k\leq x\}\right| 
 \sim\frac{1}{ e^{-\gam}\, \log \sqrt{\la} } \sum_{k\leq x  }\frac{1}{k}
\sim C^{-1} \log x, $$
so if $s_{k_N}(n)$ is the $N$th prime term in the sequence then the formula (\ref{conjf}) follows from taking 
$$x=k_N\sim \frac{\log   s_{k_N}(n) }{\log\la}.$$

 Numerical evidence for small values of $n$ suggests that 
the log log plot of the prime terms in the sequence $(\,s_k(n)\,)$ is approximately linear (see e.g.\ Figure \ref{nis3} for the case $n=3$), and gives some
support for the proposed value of $C$. 
Moreover, it is expected that the appearance of prime terms should behave like a Poisson process, 
in complete analogy with Wagstaff's 
observations on the sequence of Mersenne primes \cite{wagstaff}. 
The first appendix below contains a list of the indices $k$ for the first probable primes that appear in the sequences  $(\,s_k(n)\,)$
for $n=3,4,5,6$, and as well as including the log log plots, in each of these cases a linear best fit value of $C$ is 
found, with the ratio 
$$ 
\rho(n)=\frac{C}{\log\sqrt{\la}}
$$ 
being compared with the value 
$$e^{-\gamma}\approx 0.561459$$ 
coming from Mertens' theorem. 

An analogous behaviour should be observed in the sequences $(\,r_k(n)\,)$ for positive $n$. 

\begin{conjecture} \label{rprimes}
Let $n>2$ be a positive integer. The sequence $(\,r_k(n)\,)_{k\geq 0}$ contains infinitely many primes if and only if 
$n\neq \Tc_p(j)$  for some prime $p$, where the  integer $j\geq 3$ takes one of values specified 
 in Theorem \ref{tmaint}. 
\end{conjecture}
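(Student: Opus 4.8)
The plan is to separate the biconditional into its two implications, which are of entirely different character. The ``only if'' direction — if $n$ lies in the exclusion set then $(\,r_k(n)\,)_{k\geq 0}$ contains only finitely many primes — requires nothing new: by hypothesis $n=\Tc_p(j)$ for a prime $p$ and an admissible $j$, so Theorem \ref{tmaint} applies verbatim and yields at most one prime term. The only point worth flagging, and it is bookkeeping rather than mathematics, is that the exclusion set is already phrased in terms of \emph{prime} exponents, so when testing whether a given $n$ falls inside it one uses $\Tc_{pm'}(j)=\Tc_p(\Tc_{m'}(j))$: any composite-exponent representation with an odd prime factor reduces (with inner argument $\geq 3$) to an odd-prime case and is excluded, while a representation $n=\Tc_{2^a}(j)$ reduces to $n=\Tc_2(\Tc_{2^{a-1}}(j))$ and is excluded precisely when that inner argument has the form $2(\ell^2-1)$.

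For the converse — every other integer $n>2$ produces infinitely many primes in $(\,r_k(n)\,)$ — the plan is to transplant the heuristic framework already built for $(\,s_k(n)\,)$. First I would record the $r$-analogues of Lemma \ref{2kp1}, Lemma \ref{star}, Corollary \ref{rp} and Lemma \ref{pdiv}: from $r_k(n)=L^+_{2k+1}(\sqrt{n+2},1)$ together with $\Uc_{ab-1}=\Uc_{a-1}(\Tc_b)\,\Uc_{b-1}$ one sees that $r_k(n)$ prime forces $2k+1$ prime; one then needs a gcd statement controlling $\gcd(r_j(n),r_k(n))$ and a description of the period of $(\,r_k(n)\bmod q\,)$ via the order of $\la$ in $\F_q^*$ or $\F_{q^2}^*$ and the symbol $\legendre{n^2-4}{q}$ (the zero pattern here is governed by $\la^{2k+1}=-1$ rather than $\la^{2k+1}=1$, but the conclusions have the same shape), so that a primitive-divisor notion and a forbidden-prime bound $q\geq 4k+1$ become available. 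With those in hand one sets $\tilde{\mathcal{S}}_k(n)=\{\,\text{prime }q<4k+1\,\}\cup\{\,\text{prime }q\mid\text{discriminant}\times\text{partial product}\,\}$, checks that no $q\in\tilde{\mathcal{S}}_k(n)$ divides $r_k(n)$ when $2k+1$ is prime, and runs the same estimate: the prime number theorem gives $\mathrm{Prob}(2k+1\text{ prime})\sim 2/\log(2k+1)$, the asymptotic $r_k(n)\sim\frac{1}{2}(1+\sqrt{(n-2)/(n+2)})\,\la^k$ (proved exactly as in Proposition \ref{inc}) turns the conditional probability into $\sim(k\log\la)^{-1}\prod_{q\in\tilde{\mathcal{S}}_k(n)}(1-1/q)^{-1}$, and Mertens' theorem collapses the product, giving $\log\log r_{k_N}(n)\sim e^{-\gamma}\log\sqrt{\la}\cdot N$ and in particular infinitely many primes.

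The main obstacle is that this converse argument is merely heuristic, and it is the very obstacle that keeps the infinitude of Mersenne primes open: there is no known passage from a Mertens-type density heuristic to an unconditional proof that a sequence growing like $\la^k$ contains infinitely many primes. Hence the realistic outcome is a \emph{conditional} statement — Theorem \ref{tmaint} supplies the ``finitely many primes'' half outright, while the converse remains Conjecture \ref{rprimes}, supported only by the heuristic above and by numerical evidence (log--log plots of $r_{k_N}(n)$, a Poisson model for prime occurrences). A related but strictly weaker goal that \emph{is} attainable is to verify that for every $n$ outside the exclusion set there is no \emph{algebraic} obstruction, i.e.\ no factorization of $r_k(n)$ of the type produced in Theorems \ref{rt2} and \ref{tmaint}; but proving that this catalogue of algebraic obstructions is complete would itself require the hard input that the conjecture presumes, so in practice one settles for the heuristic together with the data.
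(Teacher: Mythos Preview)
Your assessment is correct and matches the paper's own treatment: the statement is a \emph{conjecture}, and the paper offers no proof. Exactly as you say, the ``only if'' direction is supplied by Theorem~\ref{tmaint} (at most one prime for the listed Chebyshev values), while for the converse the paper gives only the Mertens-type heuristic---worked out in detail for $(\,s_k(n)\,)$ and then asserted to transplant to $(\,r_k(n)\,)$---together with numerical log--log plots, which is precisely the programme you sketch.
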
 

The first few prime terms in the sequence  $(\,r_k(3)\,)_{k\geq 0}$ are plotted in Figure \ref{nism3}; for more details see the first appendix. 

\section{Conclusions} 

It seems highly likely that Theorem \ref{maint} identifies all those values of $n\geq 3$ such that 
the sequence $(\,s_k(n)\,)_{k\geq 0}$ contains at most one prime, and Theorem \ref{tmaint} does 
the same for  $(\,r_k(n)\,)_{k\geq 0}$. 
The sequences corresponding to all other values of $n$ 
should have infinitely many prime terms, but proving this 
should be at least as difficult as proving that there are infinitely many Mersenne primes. 
For Lehmer numbers, the most sophisticated results currently available concern primitive divisors 
\cite{bilu, stewart2013}. 

The statistics of prime appearances for non-Chebyshev values of $n$ suggests a close analogy with 
Mersenne primes.
For Mersenne primes, the Lucas-Lehmer test is extremely efficient \cite{bruce}. The ideas 
from \cite{rod, rotk} can be adapted to yield a necessary condition  for  primality of $q=s_k(n)$, 
which can be tested efficiently, 
but to provide  sufficient conditions requires the use of a Lucas test or one of its generalizations \cite{bls, pom}, 
for which the formulae 
(\ref{qpm1a}) and  (\ref{qpm1b}) are useful, since 
 they provide partial factorizations of $q\pm 1$. 
 In  future we would like to consider some of the  large primes that appear in these sequences, 
extending the approach that was applied to the case $n=6$ in \cite{nsw}.

\section {Acknowledgements} 
ANWH is supported by EPSRC fellowship EP/M004333/1. 
Some results in sections 3,4 and 5 of this paper were also obtained independently 
by Bradley Klee, who provided useful suggestions for an early draft, and has developed 
a graphical calculator application to verify the factorizations in Theorem \ref{factors} for particular values of $p$ \cite{kleeapp}.  
We are grateful to David Harvey, Robert Israel, 
Don Reble, John Roberts, Igor Shparlinski and Neil Sloane for helpful comments. 
We are also extremely indebted to Hans Havermann, whose  
extensive numerical computations originally 
inspired many of the results described here. 

\section*{Appendix A: Sequences of prime appearances} 

In order to study the appearance of prime terms when 
$n$ is a non-Chebyshev value, for some particular small values of $n$ we calculated the possible 
prime terms $q=s_{(p-1)/2}(n)$ when $p=3,5,7,11,\ldots$ is an odd prime, and then tested them for primality using the Maple {\tt isprime} command. 
This uses a probabilistic test, which excludes certain composite values of $q$, 
while remaining $q$ are only pseudoprimes. For all but the largest  values of the index $k=(p-1)/2$, we also checked the computations 
with Mathematica's {\tt PrimeQ}$[q]$ command,  as well as performing a Lucas-Lehmer style test  for 
pseudoprimes of our own, and verified that the answer was the same,

For $n=3$, the list of the first 43  values $k$ for which $s_k(3)$ appear to be prime is  OEIS 
sequence \seqnum{A117522}, beginning 
$$ 
\begin{array}{l}
2, 3, 5,6,8,9,15,18,20,23,26,30,35,39,56,156,176,251,306,308,431,548, \\ 
680, 
2393, 2396, 2925,  3870,                              4233, 5345,                              6125,                              6981,                            7224, 9734, 
17724,                              18389,              \\              22253,                           25584,                             28001,                             40835,           44924,  47411, 70028, 74045.
\end{array}
$$ 
The (probable) primes $s_k(3)$ corresponding to these values of $k$ are listed in sequence \seqnum{A285992}. 
The log log plot of these terms is given in Figure \ref{nis3}. The slope of the best fit line for these points is  
$$ 
C= 0.2553739565.
$$

For $n=4$, the list of the first 23  values $k$ for which $s_k(4)$ appear to be prime is  
$$ 
\begin{array}{l}
1,2,3,6,9,14,18,146,216,293,704,1143,1530,1593,2924,7163,9176,9489, \\ 11531,39543,  50423,60720,62868, 
\end{array}
$$ 
which are listed in OEIS sequence \seqnum{A299100}, while the corresponding values $s_k(4)$ are given in \seqnum{A299107}. 
The log log plot of these terms is given in Figure \ref{nis4}. The best fit line for this set of points has slope 
$$ 
C= 0.5196737962. 
$$

For $n=5$, the list of the first 24  values $k$ for which $s_k(5)$ appear to be prime is  
$$ 
\begin{array}{l}
2,3,5,6,8,9,15,18,23,53,114,194,564,575,585,2594,3143,4578,4970, \\9261,11508,13298, 30018,54993, 
\end{array}
$$ 
as listed in  OEIS sequence \seqnum{A299101}, 
with the corresponding values of $s_k(5)$ listed as sequence \seqnum{A299109}.
The log log plot of these terms is given in Figure \ref{nis5}. The best fit line for this set of points has slope 
$$ 
C= 0.4568584420. 
$$

\begin{figure} \centering
\includegraphics[width=10cm,height=10cm,keepaspectratio]{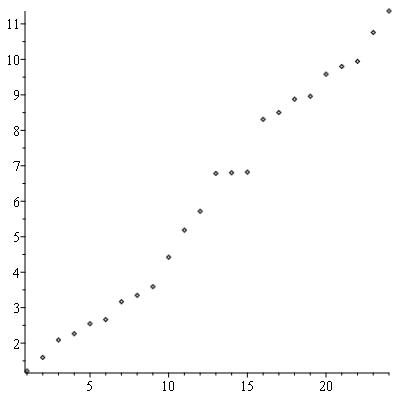}
\caption{\small{Plot of $\log\log s_{k_N}(n)$ against $N$ for the first 24 primes in the sequence for $n=5$.}}
\label{nis5}
\end{figure}
 
\begin{figure} \centering
\includegraphics[width=10cm,height=10cm,keepaspectratio]{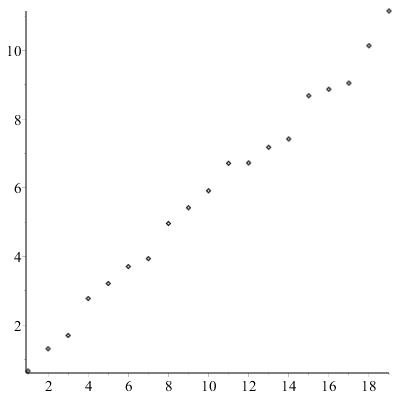}
\caption{\small{Plot of $\log\log s_{k_N}(n)$ against $N$ for the first 19 primes in the sequence for $n=6$.}}
\label{nis6}
\end{figure}

\begin{figure} \centering
\includegraphics[width=10cm,height=10cm,keepaspectratio]{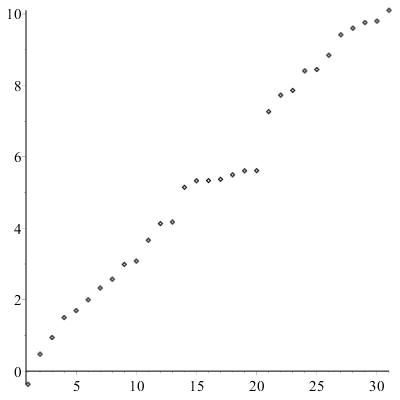}
\caption{\small{Plot of $\log\log r_{k_N}(n)$ against $N$ for the first 31 primes in the sequence for $n=3$.}}
\label{nism3}
\end{figure}

For $n=6$, the list of the first 25 
values $k$ for which $s_k(6)$ appear to be prime is  
$$ 
\begin{array}{l}

 1,
                               2,
                               3,
                               9,
                               14,
                               23,
                               29,
                               81,
                              128,
                              210,
                              468,
                              473,
                              746,
                              950,
                              3344,
                              4043,
                              4839, 
                             14376, \\
                             39521, 
64563, 72984, 82899, 84338, 85206, 86121,
\end{array}
$$ 
as given in OEIS sequence \seqnum{A113501},   
with the corresponding values of $s_k(6)$ given in  sequence \seqnum{A088165} 
(the prime NSW numbers \cite{nsw}). 
In our initial submission of this paper, we obtained the first 19 of these values independently, before we were aware of 
sequence A113501, and made the log log plot of these terms is as in Figure \ref{nis6}. The best fit line for these points has slope 
$$ 
C= 0.5434911190. 
$$
Subsequently we found the web page \cite{nswnumber}, where the last six indices above are listed separately, 
together with their date of discovery by Eric Weisstein. However, on that page it is stated 
unequivocally that all of the corresponding numbers $s_k(6)$ are prime, whereas presumably the largest of 
these values were obtained using Mathematica's probabilistic primality test, so the most that can be claimed is that 
they are probable primes. 

Assuming that the heuristic arguments given in section 6 above are correct, and that the small number of points plotted really gives an accurate picture 
of the behaviour for large $N$,  the predicted values for the ratio $\rho(n) = C/\log\sqrt{\la}$ in each case are 
$$ 
\rho(3)\approx 0.530689, \, \rho(4)\approx 0.789203, \, \rho(5)\approx 0.583174, \, \rho(6)\approx 0.616641. 
$$ 
Apart from the case $n=4$, all of these values are reasonably close to the number $e^{-\gamma}\approx 0.561459$ obtained from Mertens' theorem. The value for $n=4$ seems anomalous: there are fewer prime terms than predicted in this case. 
However, it may be unreasonable to expect close agreement with the predicted value, given the rather small number of data points plotted in each case.  

One can also consider the prime terms in the sequences 
$(\,r_k(n)\,)$, $n\geq 3$, corresponding to negative values of $n$ in $s_k(n)$. 
The list of the first 31  values $k$ for which $r_k(3)$ appear to be prime is  
$$ 
\begin{array}{l}
                               1,
                               2,
                               3,
                               5,
                               6,
                               8,
                               11,
                               14,
                               21,
                               23,
                               41,
                               65,
                               68,
                              179,
                              215,
                              216,
                              224,
                              254,
                              284,
                              285,
                              1485, \\
                              2361,
                              2693,
                              4655,  
                              4838,
                              7215,
                             12780,
                             15378,
                             17999,
                             18755,
                             25416.
\end{array}
$$ 
Figure \ref{nism3} is the log log plot of these terms. Note that 
$(\,r_k(3)\,)$ is a bisection of the Fibonacci sequence, for which 
the prime terms are isted as OEIS sequence \seqnum{A005478}. 
The slope of the best fit line for these points is  
$$ 
C= 
0.3409264905. 
$$
Dividing this value by $\log \sqrt{\la} = \log \left((1+\sqrt{5})/2\right)$ gives 
$$
\rho(-3)\approx 0.708475, 
$$ 
which is rather large compared with the value of  $e^{-\gamma}$ expected from Mertens' theorem, suggesting that the number of primes in this sequence is initially somewhat lower than would be 
expected from the heuristic argument in section 6. 

\section*{Appendix B: Related sequences  from the OEIS} 

Here we briefly mention some other sequences in the OEIS which are related to the 
considerations in this paper. 

Sequence \seqnum{A294099} contains the array of values $s_k(n)$ for $n\geq 1$, $k\geq 0$, 
while A299045 is the array of $s_k(-n)$ for the same range of $n$ and $k$.

Sequence \seqnum{A002327} consists of primes of the form $n^2-n-1$, and after sending $n\to -n$ this corresponds to prime values of the polynomial $s_2(n)=n^2+n-1$, for which the relevant values of $n$ are given by 
sequence \seqnum{A045546}. 

Sequence \seqnum{A000032} begins 
$$ 
2,1,3,4,7,11,18,29,47,\ldots,  
$$ 
and consists of the Lucas numbers denoted $\ell^+_k(1,-1)$ in section 2, which 
satisfy the Fibonacci  recurrence $\ell^+_{k+2}(1,-1)=\ell^+_{k+1}(1,-1)+\ell^+_k(1,-1)$. 
This  coincides with an interlacing of two sequences, namely 
$$ 
\Tc_0(3), s_0(3), \Tc_1(3), s_1(3), \Tc_2(3), s_2(3),\Tc_3(3), \ldots , 
$$ 
so its two distinct bisections are $(\, \Tc_k(3)\,)$ and  $(\, s_k(3)\,)$, given by \seqnum{A005248} and \seqnum{A002878} respectively.  
Similarly, the Fibonacci sequence  \seqnum{A000045} itself  coincides with the interlacing  
$$ 
\Uc_{-1}(3), r_0(3), \Uc_0(3), r_1(3), \Uc_1(3), r_2(3),\Uc_2(3), \ldots 
$$ 
obtained from $(\, \Uc_k(3)\,)$ and  $(\, r_k(3)\,)$, given by \seqnum{A001906} and \seqnum{A001519} respectively.  

There are other values of $n$ for which the OEIS entry for the sequence of terms $s_k(n)$ has not been mentioned so far: $(\,s_k(4)\,)_{k\geq 0}$ is \seqnum{A001834}, $(\,s_k(5)\,)_{k\geq 0}$ is \seqnum{A030221}, 
 $(\,s_k(7)\,)_{k\geq 0}$ is \seqnum{A033890},  $(\,s_k(8)\,)_{k\geq 0}$ is \seqnum{A057080},  and $(\,s_k(9)\,)_{k\geq 0}$ is \seqnum{A057081}.

\seqnum{A008865} is the sequence of values of $\Tc_2(j)$ for $j=1,2,3,\ldots$; the 
array of values $\Tc_k(n)$ for $k\geq 1$, $n\geq 1$ is rendered as sequence 
\seqnum{A298675}.  The values $\Tc_p(n)$ for prime $p$ are listed in sequence \seqnum{A298878}, while the 
values   $\Tc_p(n)$ with $p$ an odd prime which are not also of the form $\Tc_2(m)$ for some $m$ are 
given in \seqnum{A299071}.

\end{document}